\documentclass[12pt]{article} 

\usepackage{amsmath}
\usepackage{amsthm}
\usepackage{amsfonts}
\usepackage{mathrsfs}
\usepackage{stmaryrd}
\usepackage{setspace}
\usepackage{fullpage}
\usepackage{amssymb}
\usepackage{breqn}
\usepackage{enumitem}
\usepackage{bbold} 
\usepackage{authblk}
\usepackage{comment}
\usepackage{hyperref}
\usepackage{pgf,tikz}
\usepackage{graphicx}
\usetikzlibrary{decorations.pathreplacing,arrows}
\tikzstyle{vertex}=[circle,draw=black,fill=black,inner sep=0,minimum size=3pt,text=white,font=\footnotesize]

\bibliographystyle{plain}

\newtheorem{thm}{Theorem}[section]

\newtheorem{lemma}[thm]{Lemma}

\newtheorem{proposition}[thm]{Proposition}

\newtheorem{corollary}[thm]{Corollary}

\newtheorem{clm}[thm]{Claim}
\newtheorem{observation}[thm]{Observation}
\newtheorem*{thm*}{Theorem}

\newcommand\ex{\ensuremath{\mathrm{ex}}}
\newcommand\cA{{\mathcal A}}
\newcommand\cB{{\mathcal B}}

\newcommand\cN{{\mathcal N}}

\usepackage[mathlines]{lineno}
\newcommand{\ignore}[1]{}

\title{Generalized Tur\'an problems for small graphs}
\author{D\'aniel Gerbner\footnote{Alfr\'ed R\'enyi Institute of Mathematics, E-mail: \texttt{gerbner.daniel@renyi.hu.} Research supported by the
    National Research, Development and Innovation Office -- NKFIH under the
    grants FK 132060, KKP-133819, KH130371 and SNN 129364.}}

\begin{document}

\maketitle

\begin{abstract}
     For graphs $H$ and $F$, the generalized Tur\'an number $ex(n,H,F)$ is the largest number of copies of $H$ in an $F$-free graph on $n$ vertices. We consider this problem when both $H$ and $F$ have at most four vertices. We give sharp results in almost all cases, and connect the remaining cases to well-known unsolved problems. Our main new contribution is applying the progressive induction method of Simonovits for generalized Tur\'an problems.
\end{abstract}

\section{Introduction}

One of the most studied area of extremal Combinatorics is Tur\'an theory, which seeks to determine $\ex(n,F)$, the largest number of edges in an $F$-free graph on $n$ vertices. A natural generalization is $\ex(n,H,F)$, the largest number of copies of $H$ in $F$-free graphs on $n$ vertices. After several sporadic results (see e.g. \cite{BGY2008,gls,G2012,gypl,HHKNR2013,zykov}), the systematic study of this problem was initiated by Alon and Shikhelman \cite{ALS2016}. Since then, this problem (most commonly referred to as \textit{generalized Tur\'an problem}) has attracted several researchers, see e.g. \cite{cc,cvk,chase,GGMV2017+,GMV2017,GP2017,gp2,gs2017,gstz,mq,wang}. Many bounds and exact results have been proved, for several pairs of graphs. 

In this paper, we examine the case when both $H$ and $F$ have at most four vertices. We collect the known results and prove new results where needed. We feel it is important to put these results in the proper context, thus we state both the existing and the new results in the most general form. We even refer to strong general results when the specific small case we need is trivial. We collect the results concerning small graphs in Table \ref{table} below.

\smallskip

We start with some notation and definition.
We denote by $\cN(H,G)$ the number of copies of $H$ in $G$. The generalized Tur\'an function is $\ex(n,H,F):=\max \{\cN(H,G): G \text{ is an $F$-free graph on $n$ vertices}\}$.

For graphs we use the following notation. $K_n$ is the complete graph on $n$ vertices, $K_{a,b}$ is the complete bipartite graph with parts of size $a$ and $b$, $K_{a,b,c}$ is the complete 3-partite graph with parts of size $a$, $b$ and $c$. The Tur\'an graph $T_r(n)$ is a complete $r$-partite graph on $n$ vertices with each part having size $\lfloor n/r\rfloor$ or $\lceil n/r\rceil$.  
$C_n$ denotes the cycle on $n$ vertices,
$P_n$ denotes the path on $n$ vertices (with $n-1$ edges) and $S_n$ denotes the star on $n$ vertices. $M_\ell$ denotes the matching with $\ell$ edges (thus $2\ell$ vertices). 

We also introduce some less usual notation.
$T_\ell$ is a graph on $l+3$ vertices with $l+3$ edges, that consists of a triangle and $\ell$ other vertices, connected to the same vertex of the triangle ($T_1$ is also called sometimes the paw graph).
$D(k,n)$ is the graph consisting of $\lfloor n/k\rfloor$ copies of $K_k$ and a clique on the remaining vertices. $\overline{K_{s,t}}$ is $K_{s,t}$ with every pair of vertices inside the part of size $s$ connected by an edge. 
 We denote by $G_{n,k,\ell}$ the graph whose vertex set is partitioned into 3 classes, $A$, $B$
and $C$ with $|A| = n-k+\ell$, $|B| = \ell$, $|C| = k - 2\ell$ such that vertices of $B$ have degree $n-1$,  $A$ is an
independent set, $C$ is a clique, and there is no edge between $A$ and $C$. $F(n)$ denotes the friendship graph on $n$ vertices, which has a vertex of degree $n-1$ and a largest matching $M_{\lfloor (n-1)/2\rfloor}$ on the other vertices.

Following \cite{gp2}, if $F$ is a $k$-chromatic graph and $H$ does not contain $F$, then we say that $H$ is \textit{$F$-Tur\'an-good}, if $\ex(n,H,F)=\cN(H,T_{k-1}(n))$ for every $n$ large enough. 
We shorten $K_k$-Tur\'an-good to \textit{$k$-Tur\'an-good}.

\smallskip

Observe that if $F$ contains isolated vertices, then for $n\ge |V(H)|$, the same $n$-vertex graphs contain $F$ and the graph $F'$ we obtain by deleting the isolated vertices from $F$. Therefore, $\ex(n,H,F)=\ex(n,H,F')$. If $H$ contains $k$ isolated vertices, let $H'$ be the graph we obtain by deleting the isolated vertices from $H$. Then each copy of $H'$ in an $n$-vertex graph $G$ extends to a copy of $H$ exactly $\binom{n-|V(H')|}{k}$ ways, thus it is enough to determine $\ex(n,H',F)$. Therefore, we can restrict ourselves to the case neither $F$ nor $H$ contains isolated vertices. With this restriction there are ten graphs on at most four vertices.

We collect a summary of the results in a $10 \times 10$ table. Here we explain what is in the table.
If the column is $F$ and the row is $H$, the entry summarizes what we know about $\ex(n,H,F)$. If the entry is 0, that means $H$ contains $F$, thus $\ex(n,H,F)=0$. Otherwise, the entry does not contain the value of $\ex(n,F)$, it contains a letter and the number of a theorem (or proposition, corollary or observation). The letter E means we know $\ex(n,H,F)$ \textit{exactly}, provided $n$ is large enough. The letter A means we know the asymptotics, 
while the letter $B$ means we only have some bounds and we do not even know the order of magnitude. The numbers after the letter refer to a statement that contains the actual result regarding $\ex(n,H,F)$. Usually it is a more general result.

\begin{table}[h!]\label{table}
\begin{center}
\begin{tabular}{ |p{1cm}||p{1cm}|p{1.3cm}|p{1.3cm}|p{1.3cm}|p{1.3cm}|p{1.3cm}|p{1.3cm}|p{1.3cm}|p{1.3cm}|p{1.3cm}| }
 \hline
  & $K_2$ & $P_3$ & $K_3$ & $M_2$ & $S_4$ & $P_4$ & $C_4$ & $T_1$ & $B_2$ & $K_4$ \\
 \hline
$K_2$  & $0$    & E, \ref{erga}  & E, \ref{Tur} & E, \ref{erga2} & E, \ref{star} & E, \ref{erga} & A, \ref{furedi} & E, \ref{sim} & E, \ref{sim} & E, \ref{Tur}\\
$P_3$ & $0$    & 0  & E, \ref{gpl2} & E, \ref{matching} & E, \ref{regu} & E, \ref{paths} & E, \ref{friends} & E, \ref{tri1} & E, \ref{p3cc} & E, \ref{gpl2} \\
$K_3$ & $0$ & 0 & 0& E, \ref{cons} & E, \ref{gls} & E, \ref{chch} & A, \ref{als} & E, \ref{trian} & B, \ref{als2} &E, \ref{zykov} \\
$M_2$ & $0$  & E, \ref{cseresz}  & E, \ref{gmv} & 0 & E, \ref{regfor} & E, \ref{dani} & A, \ref{matchbip} & E, \ref{matc} & E, \ref{matc} & E, \ref{matc}\\
$S_4$ & $0$  & 0 & E, \ref{induc} & E, \ref{matching} & 0 & E, \ref{starpaths} & E, \ref{korstar} & E, \ref{tegy} & E, \ref{s4b2} & E, \ref{stacli}\\
$P_4$   &$0$ & 0 & E, \ref{gpl2} & 0 & E, \ref{regu} & 0 & A, \ref{coun}  & E, \ref{tri1} & E, \ref{turgoo2} & E, \ref{p4k4} \\
$C_4$ & 0 & 0 & E, \ref{gpl2} & 0& E, \ref{sta} & 0& 0& E, \ref{tri1} & E, \ref{turgoo2} & E, \ref{gpl3}\\
$T_1$ &  $0$ & 0 & 0 &0 &0 &0 & E, \ref{tetel} &0 & B, \ref{rsze} & E, \ref{turgoodgp} \\
$B_2$ & 0 & 0 & 0 & 0 &0 &0 &0 &0 &0 & E, \ref{gpl2} \\
$K_4$ & 0 & 0& 0& 0 & 0&0 &0 &0 &0 & 0\\
\hline
\end{tabular}
\caption{Generalized Tur\'an numbers of small graphs}
\end{center}
\end{table}

The exact results here are proved only for $n$ large enough.
We are not interested in small values of $n$, and we do not mention in the table the cases where in fact we have exact results for all $n$.
In some cases, the result we state follows from a more general theorem, stated only for $n$ large enough, and it would not be hard to obtain the exact value of $\ex(n,H,F)$ for every $n$ in case of the particular small graphs we study here.

Our main new contribution is applying the progressive induction method of Simonovits \cite{sim} for generalized Tur\'an problems and use it to resolve a problem of Gerbner and Palmer \cite{gp2}.

\begin{thm}\label{p3cc}
If $F$ is a 3-chromatic graph with a color-critical edge, then $P_3$ is $F$-Tur\'an-good.
\end{thm}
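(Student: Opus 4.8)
The plan is to establish the matching upper bound $\ex(n,P_3,F)\le \cN(P_3,T_2(n))$ for all large $n$, with $T_2(n)$ the unique optimum; the lower bound is free, since $T_2(n)$ is bipartite and $F$ is $3$-chromatic, so $T_2(n)$ is $F$-free. I would work throughout with the identity $\cN(P_3,G)=\sum_{v}\binom{d(v)}{2}=\sum_{\{x,y\}}|N(x)\cap N(y)|$, and record the exact value $\cN(P_3,K_{a,b})=\tfrac{n-2}{2}\,ab$ for $a+b=n$, which shows that among complete bipartite graphs the balanced one $T_2(n)$ is best.

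First I would reduce to a near-extremal structure. The edge bound alone is useless here: an $F$-free graph has at most $\lfloor n^2/4\rfloor$ edges by Simonovits, but a degree sequence with about $n/2$ vertices of degree about $n-1$ would have roughly $n^3/4$ copies of $P_3$, twice the target, so convexity pushes the wrong way and the fact that no such degree concentration can occur in an $F$-free graph is essential. Using the asymptotic and stability theory of generalized Tur\'an problems (supersaturation forces a near-$P_3$-extremal $F$-free graph to be close to bipartite, and among bipartite graphs $T_2(n)$ maximizes the $P_3$-count), I would obtain a partition $V(G)=X\cup Y$ with $|X|,|Y|=n/2+o(n)$, with $o(n^2)$ edges inside the parts and $o(n^2)$ missing edges across them.

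The exact step is where Simonovits' progressive induction enters, and this is the new ingredient. Inducting on $n$, I take an extremal $F$-free $G$ and delete a vertex $v$; writing the number of copies of $P_3$ through $v$ as $\binom{d(v)}{2}+\sum_{u\sim v}(d(u)-1)$, I compare $\cN(P_3,G)$ against $\cN(P_3,G-v)\le \cN(P_3,T_2(n-1))$ plus this through-count. The target increment is $\cN(P_3,T_2(n))-\cN(P_3,T_2(n-1))$, which for a typical vertex of $T_2(n)$ equals exactly the through-count $\binom{n/2}{2}+\tfrac n2(\tfrac n2-1)\approx 3n^2/8$, so the induction closes precisely when $v$ behaves like a Tur\'an-graph vertex. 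The color-critical edge of $F$ is what forces this: given the near-complete-bipartite structure, any edge inside a part $X$ can be taken to be the image of the color-critical edge of $F$, and since $F$ minus that edge is bipartite it embeds into the dense bipartite skeleton $X\cup Y$, producing a forbidden copy of $F$; hence the parts must be independent and the cross-pairs complete, after which balancing $|X|$ and $|Y|$ yields $G=T_2(n)$.

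The main obstacle, and the reason a direct cleaning argument does not suffice, is the bookkeeping of the $P_3$-count rather than the edge-count. Because $\cN(P_3,\cdot)$ is a convex global function of the degree sequence, a few extra internal edges can locally inflate some degrees and hence the count, and one must show that each such edge is paid for by enough missing cross-edges that the through-count never exceeds $\cN(P_3,T_2(n))-\cN(P_3,T_2(n-1))$, with equality only in $T_2(n)$. Controlling these error terms uniformly as $n$ grows, rather than term by term, is exactly what the progressive induction deficiency argument is designed to do, and making the through-count estimate tight enough to close the induction is the delicate point I expect to occupy most of the work.
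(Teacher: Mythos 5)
Your plan (stability to get a near-bipartite partition, then an exact cleaning step driven by the color-critical edge, wrapped in a vertex-deletion progressive induction) is not the paper's route, and as written it has genuine gaps at each stage. First, the stability input is asserted, not available: nothing in the paper, and nothing off the shelf, says that an $F$-free graph with close to $\cN(P_3,T_2(n))$ copies of $P_3$ is within $o(n^2)$ edges of a balanced complete bipartite graph. The paper deliberately avoids needing any such statement; it only uses the removal lemma plus Zykov symmetrization (Corollary \ref{flenbtwo}) to rule out vertices of degree above $0.51n$. Second, your exact step --- ``any edge inside $X$ can serve as the critical edge, so $F$ minus that edge embeds into the dense bipartite skeleton'' --- fails without further work: the skeleton is only missing $o(n^2)$ cross edges \emph{in aggregate}, so the two endpoints of a particular internal edge may individually have only $o(n)$ neighbors across the partition, and then no copy of $F$ arises. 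Repairing this requires a minimum-degree or atypical-vertex cleaning argument that you have not supplied.

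Third, and most seriously, the induction cannot close in the form you set it up. Each copy of $P_3$ contains three vertices, so the through-counts sum to $3\cN(P_3,G)$; in an extremal $G$ the \emph{average} through-count is therefore already at least (and asymptotically equal to) your target increment $\cN(P_3,T_2(n))-\cN(P_3,T_2(n-1))\approx 3n^2/8$. Deleting one vertex thus yields no slack, and you identify no structural source of strict decrease --- you only note that this is ``the delicate point.'' This is exactly why the paper proceeds differently: Lemma \ref{ccb2} first reduces general $F$ to the book $B_t=K_{2,t}^*$, because $B_t$-freeness gives the quantitative fact that a triangle $uvw$ satisfies $d(u)+d(v)+d(w)\le n+3t-3$ (all but $O(t)$ vertices see at most one of $u,v,w$) --- a bound that is simply false for general $3$-chromatic $F$ with a critical edge. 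Combined with the $0.51n$ degree cap, deleting the \emph{whole triangle} costs at most
\begin{equation*}
3+2|E(G')|+(9t-9)n+2\binom{0.51n}{2}\approx 0.76\,n^2,
\end{equation*}
visibly below the three-vertex increment $\approx 9n^2/8$, which is the strict decrease that Lemma \ref{progi} needs. The paper's comparison class is also different from yours: $\cB$ is the set of extremal graphs that are \emph{triangle-free} (with the surplus $f$ measured against $\ex(n,P_3,K_3)$, and Corollary \ref{gpl2} converting triangle-freeness into the $T_2(n)$ value), rather than the single graph $T_2(n)$. Without the reduction to $B_t$ and without a replacement for the missing slack, your induction does not close.
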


The rest of this paper is organized as follows. In Section 2 we state the existing results we use. We state them in the most general form, but it is always immediate how they imply the bounds for our specific cases. In Section 3 we state and prove most of our new results. In Section 4 we introduce progressive induction, prove Theorem \ref{p3cc} and another result. We finish the paper with some concluding remarks in Section 5.

\section{Earlier results}

In this section we state earlier results that imply some of the bounds. As the first row of the table corresponds to counting edges, we start with some results concerning ordinary Tur\'an problems. We shall begin with Tur\'an's theorem.

\begin{thm}[Tur\'an, \cite{T1941}]\label{Tur}
We have $\ex(n,K_k)=|E(T_{k-1}(n))|$, i.e. $K_2$ is $k$-Tur\'an-good.
\end{thm}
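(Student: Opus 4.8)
The plan is to establish the extremal statement $\ex(n,K_k)=|E(T_{k-1}(n))|$ directly, noting that counting copies of $K_2$ is the same as counting edges, so $\cN(K_2,T_{k-1}(n))=|E(T_{k-1}(n))|$ and the claim that $K_2$ is $k$-Tur\'an-good is literally Tur\'an's theorem. I would prove it by Zykov symmetrization. First I would fix a $K_k$-free graph $G$ on $n$ vertices with the maximum possible number of edges; such an extremal $G$ exists because there are only finitely many graphs on a fixed vertex set. The goal is to show $G$ is isomorphic to $T_{k-1}(n)$.

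The central step is to prove that in such an extremal $G$ the non-adjacency relation is transitive, so that $G$ is complete multipartite. The tool is the cloning operation: for non-adjacent vertices $x,y$, replacing $y$ by a copy of $x$ (deleting all edges at $y$ and joining $y$ to exactly $N(x)$) preserves $K_k$-freeness, since $x$ and $y$ stay non-adjacent and thus no clique uses both, while any clique through the new $y$ maps to one through $x$ in $G$; this changes the edge count by $\deg(x)-\deg(y)$. Suppose for contradiction that non-adjacency fails to be transitive, so there are vertices with $xy,yz\notin E(G)$ but $xz\in E(G)$. If $\deg(y)$ is strictly smaller than $\deg(x)$ (or than $\deg(z)$), cloning $y$ from the larger-degree endpoint strictly increases the number of edges, contradicting maximality. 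Otherwise $\deg(y)\ge\deg(x)$ and $\deg(y)\ge\deg(z)$; then deleting both $x$ and $z$ and inserting two mutually non-adjacent clones of $y$ removes $\deg(x)+\deg(z)-1$ edges and adds $2\deg(y)$, a net gain of at least $1$, again contradicting maximality while keeping the graph $K_k$-free. Hence non-adjacency is an equivalence relation and $G$ is complete multipartite, say with parts of sizes $n_1,\dots,n_r$.

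It then remains to pin down the number and sizes of the parts. Since a complete $r$-partite graph contains $K_r$, the $K_k$-freeness of $G$ forces $r\le k-1$. Writing the edge count as $\binom{n}{2}-\sum_{i=1}^{r}\binom{n_i}{2}$, I would minimize $\sum_i\binom{n_i}{2}$ subject to $\sum_i n_i=n$: convexity of $t\mapsto\binom{t}{2}$ shows this sum is smallest when the part sizes differ by at most one, and the resulting maximum edge count is monotone increasing in $r$, so the optimum is attained at $r=k-1$ with balanced parts, which is precisely $T_{k-1}(n)$.

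I expect the main obstacle to be the degree case analysis in the symmetrization step, in particular verifying that the double-cloning move in the case $\deg(y)\ge\deg(x),\deg(z)$ yields a strict increase in the number of edges while preserving $K_k$-freeness; the bookkeeping of the shared edge $xz$ is where an off-by-one slip is easy to make. The closing convexity-and-monotonicity optimization over part sizes is routine.
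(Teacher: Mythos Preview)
Your proof is correct; this is the standard Zykov symmetrization argument for Tur\'an's theorem, and the bookkeeping in the double-cloning step (removing $\deg(x)+\deg(z)-1$ edges, adding $2\deg(y)$, net gain at least $1$) is handled correctly.

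However, there is nothing to compare against: the paper does not supply its own proof of this statement. Theorem~\ref{Tur} is listed in Section~2 (``Earlier results'') as a classical theorem attributed to Tur\'an \cite{T1941}, with no argument given. The paper simply quotes it as background for the generalized Tur\'an problems it studies. So your symmetrization proof is perfectly fine as a self-contained justification, but it is additional content rather than a reconstruction of anything in the paper.
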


We say that an edge $e$ of a graph $G$ is \textit{color-critical} if deleting $e$ from $G$ decreases the chromatic number of the graph. Simonovits \cite{sim} showed that the Tur\'an graph has the largest number of edges if we forbid any $k$-chromatic graph with a color-critical edge, provided $n$ is large enough.

\begin{thm}[Simonovits \cite{sim}]\label{sim}
If $F$ has chromatic number $k$ and a critical edge, and $n$ is large enough, then $\ex(n,F)=|E(T_{k-1}(n))|$, i.e. $K_2$ is $F$-Tur\'an-good. 
Moreover, $T_{k-1}(n)$ is the unique extremal graph.
\end{thm}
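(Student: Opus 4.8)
The plan is to prove the easy lower bound directly and to obtain the matching upper bound, together with uniqueness, through the Erd\H{o}s--Simonovits stability method, invoking the color-critical edge only at the very last step. For the lower bound, note that $T_{k-1}(n)$ is $(k-1)$-colorable, so every subgraph of it is $(k-1)$-colorable and cannot equal the $k$-chromatic graph $F$; hence $T_{k-1}(n)$ is $F$-free and $\ex(n,F) \ge |E(T_{k-1}(n))|$.

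The single structural fact I would extract from the hypothesis is the following. Let $e=xy$ be a color-critical edge, so that $\chi(F-e)=k-1$. Fixing a proper $(k-1)$-coloring of $F-e$, the endpoints $x$ and $y$ must receive the same color, so $F$ embeds into the graph obtained from a complete $(k-1)$-partite graph (with all parts of size $|V(F)|$) by adding a single edge inside one part. Thus a copy of $F$ appears as soon as we locate such a ``complete multipartite graph plus one intra-part edge'' inside our host graph.

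First I would fix an extremal $F$-free graph $G$ on $n$ vertices. By the Erd\H{o}s--Stone--Simonovits theorem, $|E(G)| = \ex(n,F) = \left(1-\tfrac{1}{k-1}+o(1)\right)\binom{n}{2}$, and by the Erd\H{o}s--Simonovits stability theorem $G$ can be made $(k-1)$-partite by deleting $o(n^2)$ edges. I would then take a partition $V(G)=V_1 \cup \dots \cup V_{k-1}$ maximizing the number of crossing edges and discard the few \emph{bad} vertices---those of atypically low degree or with too many neighbors inside their own part---so that every remaining vertex is adjacent to all but $o(n)$ vertices of each other part. The crux is to rule out any edge inside a part: if $uv$ were such an edge with $u,v$ good, I would embed $F$ by placing $u,v$ as the endpoints $x,y$ of the intra-part edge and then extending greedily, vertex by vertex, using the embedding from the previous paragraph. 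At each step the common neighborhood of the already-placed vertices within the relevant part has size $\Omega(n)$, since good vertices miss only $o(n)$ vertices per part, so the greedy extension never stalls and produces a copy of $F$ in $G$, a contradiction. Hence $G$ is genuinely $(k-1)$-partite.

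Finally, a $(k-1)$-partite graph is $K_k$-free, so by Tur\'an's theorem (Theorem \ref{Tur}), together with the classical uniqueness of its extremal graph, it has at most $|E(T_{k-1}(n))|$ edges, with equality only for $T_{k-1}(n)$; combined with the lower bound this gives $\ex(n,F)=|E(T_{k-1}(n))|$ and identifies $T_{k-1}(n)$ as the unique extremal graph. The main obstacle is the cleaning-and-embedding step: one must calibrate the stability parameter against the number of bad vertices so that, even after a stray intra-part edge appears, the surrounding graph remains dense enough for the greedy embedding of $F$ to go through for all large $n$.
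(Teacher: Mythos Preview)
The paper does not actually prove this theorem; it is quoted in Section~2 as a known result of Simonovits~\cite{sim}. So there is no ``paper's own proof'' to compare against directly. That said, it is worth noting that Simonovits's original argument did \emph{not} go through stability: it introduced the progressive induction method (Lemma~\ref{progi} of the present paper) precisely to handle this theorem, and the present paper revisits that method in Section~4 for other purposes. Your stability-based route is the standard modern alternative; it is perfectly valid, but it is historically and logically a different argument, and one should be aware that the Erd\H{o}s--Simonovits stability theorem you invoke was proved in the very same paper~\cite{sim}, so some care is needed about what one is assuming.

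On the substance of your sketch: the real gap is exactly where you flag it, but it is a bit more serious than a calibration of parameters. After you discard the bad vertices you only conclude that the \emph{good} vertices span a $(k-1)$-partite graph; you still have to rule out intra-part edges incident to bad vertices, and your greedy embedding does not apply to those. The usual repair is a separate minimum-degree argument: if some vertex $v$ has degree below $\delta(T_{k-1}(n))$, delete it and observe that $G-v$ would have more than $|E(T_{k-1}(n-1))|$ edges, contradicting the result for $n-1$. But this is an induction whose base case is false for small $n$, which is precisely the difficulty progressive induction was designed to overcome. So either you need to supply an independent minimum-degree step (there are ways to do this, e.g.\ iterated deletion down to a linear-sized core and a careful edge count), or you are implicitly re-deriving the progressive-induction machinery. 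Your outline is on the right track, but as written it does not yet close this loop.
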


It is trivial to determine the Tur\'an number of stars. We state it here so that we can refer to it.

\begin{observation}\label{star}
$\ex(n,S_k)=\lfloor (k-2)n/2\rfloor$.
\end{observation}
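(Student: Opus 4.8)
The plan is to reinterpret $S_k$-freeness as a bound on the maximum degree, and then to combine an elementary counting upper bound with an explicit near-regular construction for the matching lower bound.

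First I would observe that a copy of $S_k=K_{1,k-1}$ in a graph $G$ is precisely a vertex together with a choice of $k-1$ of its neighbors. Hence $G$ contains $S_k$ if and only if some vertex has degree at least $k-1$, and therefore $G$ is $S_k$-free exactly when its maximum degree satisfies $\Delta(G)\le k-2$. This translation turns the problem into: maximize $|E(G)|$ over $n$-vertex graphs with $\Delta(G)\le k-2$.

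For the upper bound I would apply the handshake identity together with integrality: $2|E(G)|=\sum_{v}\deg(v)\le (k-2)n$, so $|E(G)|\le (k-2)n/2$, and since $|E(G)|$ is a nonnegative integer this forces $|E(G)|\le \lfloor (k-2)n/2\rfloor$. For the lower bound I would exhibit, for $n\ge k-1$, a graph with $\Delta\le k-2$ attaining exactly $\lfloor (k-2)n/2\rfloor$ edges. When $(k-2)n$ is even a $(k-2)$-regular graph suffices; when $(k-2)n$ is odd (so $k-2$ and $n$ are both odd) a graph in which one vertex has degree $k-3$ and all others have degree $k-2$ suffices. Concretely, place the $n$ vertices on a cycle and join each to its $\lfloor (k-2)/2\rfloor$ nearest neighbors on either side; if $k-2$ is odd, augment this with the antipodal ("diameter") edges when $n$ is even, or with a near-perfect matching on non-adjacent pairs when $n$ is odd. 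Each case keeps all degrees at most $k-2$ and yields the target edge count.

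The only nontrivial point—and hence the main obstacle—is the existence of this extremal graph realizing exactly $\lfloor (k-2)n/2\rfloor$ edges with bounded degree, which amounts to the classical fact that a $d$-regular graph on $n$ vertices exists whenever $dn$ is even and $d\le n-1$ (with the single odd-sum vertex handled by the near-perfect matching). The requisite parity casework is entirely routine, so the upper and lower bounds meet and give $\ex(n,S_k)=\lfloor (k-2)n/2\rfloor$.
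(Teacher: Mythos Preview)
Your proof is correct and is exactly the standard argument; the paper itself treats this observation as trivial and does not supply any proof, so there is nothing substantive to compare. The only small caveat is that the formula as stated requires $n\ge k-1$ (otherwise $K_n$ is $S_k$-free with fewer than $\lfloor(k-2)n/2\rfloor$ edges), which you implicitly acknowledge in your lower-bound construction and which the paper ignores since it is only concerned with large $n$.
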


Erd\H os and Gallai \cite{erga} studied $\ex(n,P_k)$, but obtained the exact value only for $n$ divisible by $k-1$. Faudree and Schelp \cite{fasc} improved their result and showed the following.

\begin{thm}[Faudree and Schelp \cite{fasc} ]\label{erga}
For every $n$ and $k$ we have $\ex(n,P_k)=|E(D(k-1,n))|$.
\end{thm}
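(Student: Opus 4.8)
The plan is to establish both bounds for the exact value $f(n):=\abs{E(D(k-1,n))}$, where, writing $n=q(k-1)+r$ with $0\le r<k-1$, we have $f(n)=q\binom{k-1}{2}+\binom{r}{2}$.

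For the lower bound I would simply observe that $D(k-1,n)$ is a disjoint union of cliques, each on at most $k-1$ vertices. Since $P_k$ is connected and has $k$ vertices, it cannot be embedded into a component on fewer than $k$ vertices; hence $D(k-1,n)$ is $P_k$-free and $\ex(n,P_k)\ge f(n)$.

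For the upper bound I would argue by induction on $n$, first recording the reformulation $f(n)=\max\{\sum_i\binom{n_i}{2}:\sum_i n_i=n,\ 1\le n_i\le k-1\}$, which holds because $\binom{\cdot}{2}$ is convex, so the optimal clique-packing uses as many parts of size exactly $k-1$ as possible. The base case $n\le k-1$ is immediate: every graph on at most $k-1$ vertices is $P_k$-free, the maximum is attained by $K_n$, and indeed $f(n)=\binom{n}{2}$. From the reformulation I would also extract superadditivity, $f(a)+f(b)\le f(a+b)$, since the disjoint union of optimal clique-packings on $a$ and on $b$ vertices is itself a clique-packing on $a+b$ vertices and therefore has at most $f(a+b)$ edges. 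This reduces everything to connected graphs: if an extremal $P_k$-free graph $G$ splits into components $C_1,\dots,C_t$ of sizes $n_1,\dots,n_t<n$, then $\abs{E(G)}=\sum_i\abs{E(C_i)}\le\sum_i f(n_i)\le f(n)$ by induction and superadditivity.

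The heart of the argument, and the main obstacle, is the connected case with $n\ge k$. Here I would take a longest path $P=v_0v_1\cdots v_\ell$; since $G$ is $P_k$-free we have $\ell\le k-2$, and by maximality of $P$ every neighbor of each endpoint $v_0,v_\ell$ lies on $P$. The difficulty is that the naive degree bound $\deg(v_\ell)\le\ell$ only yields the coarse Erd\H os--Gallai estimate $(k-2)n/2$ and is not sharp enough to recover the exact value for every $n$; completing the induction requires pinning the structure down more tightly. I would do this through Pósa-type rotations of the longest path: rotating about $P$ produces many longest paths whose endpoints form a set with no edges to certain ``successor'' vertices, and this restricts the relevant neighborhoods enough either to locate a vertex whose deletion leaves a $P_k$-free graph fitting the exact recursion $f(n)=f(n-1)+(\text{the correct increment})$, or to conclude that $G$ is a clique on at most $k-1$ vertices. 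The genuinely delicate point is controlling the contribution of the remainder $r$ so that the bound is exact for \emph{all} $n$ and not merely asymptotically; to make this airtight I would, if necessary, pass to the $2$-connected blocks of $G$, invoke a Kopylov-style exact bound on each block, and reassemble along the block tree.
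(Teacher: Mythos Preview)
The paper does not prove this theorem at all: it is quoted in Section~2 among the ``earlier results,'' attributed to Faudree and Schelp (sharpening Erd\H{o}s--Gallai), and no argument is given. There is thus nothing in the paper to compare your attempt against.

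Judged on its own, your lower bound and the reduction to connected graphs via superadditivity of $f$ are correct and standard. The gap is the connected case with $n\ge k$, which is the whole difficulty and which you only sketch. In particular, the plan ``locate a vertex whose deletion fits the recursion $f(n)=f(n-1)+(\text{the correct increment})$'' cannot work as stated: when $n\equiv 1\pmod{k-1}$ one has $f(n)-f(n-1)=0$, so you would need a vertex of degree $0$ in a connected graph on at least $k\ge 2$ vertices. The P\'osa-rotation and Kopylov-block remarks point in the right direction---the original Faudree--Schelp proof does proceed through a careful structural analysis of longest paths---but you have only named tools rather than executed an argument, and the passage from ``longest path has at most $k-1$ vertices'' to the exact edge count across all residues $n\bmod(k-1)$ is precisely the substantive content of the Faudree--Schelp paper.
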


\begin{thm}[Erd\H os and Gallai, \cite{erga}]\label{erga2} If $n>2l$, then
$\ex(n,M_\ell)=|E(\overline{K_{k-1,n-k+1}})|$.
\end{thm}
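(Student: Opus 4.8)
The plan is to prove the upper bound matching the split graph $\overline{K_{\ell-1,n-\ell+1}}$ (a clique of size $\ell-1$ completely joined to an independent set of size $n-\ell+1$), since this graph is easily seen to be $M_\ell$-free and has exactly $\binom{\ell-1}{2}+(\ell-1)(n-\ell+1)$ edges. Forbidding $M_\ell$ is the same as requiring matching number $\nu(G)\le \ell-1$, so I must show that every such graph on $n$ vertices (with $n$ large) has at most this many edges. I would proceed by induction on $n$, the engine being a minimum-degree lemma whose ``slope'' matches the target formula: since $\binom{\ell-1}{2}+(\ell-1)(n-\ell+1)$ increases by exactly $\ell-1$ when $n$ increases by $1$, it suffices to delete one vertex of degree at most $\ell-1$ at each step.

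The key step, and the one I expect to carry the real content, is the following lemma: if $\nu(G)\le \ell-1$ and $n\ge 2\ell$, then $G$ has a vertex of degree at most $\ell-1$. I would prove it by contradiction. Suppose the minimum degree is at least $\ell$, and fix a maximum matching $M=\{x_1y_1,\dots,x_sy_s\}$ with $s\le \ell-1$. Its set $U$ of unmatched vertices is independent (otherwise $M$ extends) and satisfies $\abs{U}=n-2s\ge 2$, so I may pick two distinct $u,w\in U$; all of their neighbours lie in $V(M)$. Maximality of $M$ rules out augmenting paths, so for each matched pair we cannot have both $ux_i,\,wy_i$ present, nor both $uy_i,\,wx_i$ present; consequently at most two of the four possible edges between $\{u,w\}$ and $\{x_i,y_i\}$ occur. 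Summing over the $s$ pairs gives $\deg(u)+\deg(w)\le 2s\le 2\ell-2$, contradicting $\deg(u),\deg(w)\ge \ell$. The hypothesis $n\ge 2\ell$ is used precisely to guarantee that two unmatched vertices $u,w$ exist.

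Granting the lemma, the induction is immediate: for $n$ above the threshold I delete a vertex $v$ with $\deg(v)\le \ell-1$, note that $G-v$ is still $M_\ell$-free, and obtain (writing $e(\cdot)$ for the number of edges)
\[
e(G)=e(G-v)+\deg(v)\le \binom{\ell-1}{2}+(\ell-1)(n-\ell)+(\ell-1)=\binom{\ell-1}{2}+(\ell-1)(n-\ell+1).
\]
Because each deletion loses exactly the slope $\ell-1$, this transfers the sharp bound from a single anchor value $n_0$ to all larger $n$. Establishing the sharp bound at that anchor is the main obstacle: there the two candidate extremal graphs, the clique $K_{2\ell-1}$ and the split graph $\overline{K_{\ell-1,n-\ell+1}}$, genuinely compete, and one must place $n_0$ beyond their crossover (near $n=\tfrac{5}{2}\ell$) and carry out the original Erd\H os--Gallai maximum-matching edge count; this is also why the conclusion is stated only for $n$ large (for the case $\ell=2$ needed in the table the crossover is at $n=4$, so $n>2\ell$ already suffices). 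Finally, uniqueness of the extremal graph propagates through the induction: equality forces $\deg(v)=\ell-1$ and $G-v\cong\overline{K_{\ell-1,n-\ell}}$, and since $v$ cannot be adjacent to any vertex of the independent part (otherwise a near-perfect matching of the clique extends to an $M_\ell$), the vertex $v$ must be joined exactly to the clique, reproducing $\overline{K_{\ell-1,n-\ell+1}}$.
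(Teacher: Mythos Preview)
The paper does not prove this theorem; it appears in Section~2 (``Earlier results'') purely as a cited statement of Erd\H os and Gallai, with no argument given. So there is no in-paper proof to compare against.

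Your approach via a minimum-degree lemma is a standard and correct route. The augmenting-path count is right: for two unmatched vertices $u,w$ and a matched pair $\{x_i,y_i\}$, maximality of $M$ rules out each of the crossing pairs $\{ux_i,wy_i\}$ and $\{uy_i,wx_i\}$, so at most two of the four possible edges occur, whence $\deg(u)+\deg(w)\le 2s\le 2\ell-2$ and some vertex has degree at most $\ell-1$. The induction step then goes through exactly as you wrote.

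Two remarks. First, the statement as printed in the paper has a typo ($k$ should read $\ell$) and the threshold $n>2\ell$ is in fact not sufficient for general $\ell$: for $\ell\ge 5$ the clique $K_{2\ell-1}$ (padded with isolated vertices) beats the split graph for $n$ between $2\ell+1$ and roughly $5\ell/2$, so the stated equality fails there. The genuine Erd\H os--Gallai theorem gives the maximum of the two candidate values, as you implicitly acknowledge when you locate the crossover near $5\ell/2$. Second, your treatment of the base case is honest but not self-contained: deferring to ``the original Erd\H os--Gallai maximum-matching edge count'' at the anchor $n_0$ is circular. For the only instance the paper actually needs, $\ell=2$, your remark that $n>4$ already lies past the crossover is correct, and the base case $n=4$ (where both $K_{1,3}$ and $K_3$ plus an isolated vertex achieve three edges) is immediate, so your induction yields a complete proof in that case.
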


F\"uredi \cite{fur} determined the asymptotics of $\ex(n,K_{2,t})$. For infinitely many values of $n$, the exact value of $\ex(n,C_4)$ was also found by F\"uredi \cite{fure}.

\begin{thm}[F\"uredi, \cite{fur}]\label{furedi}
$\ex(n,K_{2,t})=(1+o(1))\frac{1}{2}\sqrt{t-1}n^{3/2}$.
\end{thm}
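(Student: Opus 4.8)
The plan is to prove matching upper and lower bounds of the form $(1+o(1))\frac12\sqrt{t-1}\,n^{3/2}$ for $\ex(n,K_{2,t})$. The upper bound is the classical K\H{o}v\'ari--S\'os--Tur\'an double count, while the lower bound is the genuinely new and harder content, requiring an explicit near-extremal construction.

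For the upper bound I would start from the observation that a graph $G$ is $K_{2,t}$-free exactly when no two distinct vertices have $t$ or more common neighbours, i.e.\ every pair has at most $t-1$ common neighbours. Counting cherries (paths $u$--$w$--$v$) in two ways: summing over the apex $w$ gives $\sum_{w}\binom{d(w)}{2}$, while summing over the endpoint pair $\{u,v\}$ gives at most $(t-1)\binom{n}{2}$, since each pair has at most $t-1$ common neighbours. Hence
\[
\sum_{w\in V(G)}\binom{d(w)}{2}\le (t-1)\binom{n}{2}.
\]
Applying convexity of $x\mapsto\binom{x}{2}$ together with $\sum_w d(w)=2|E(G)|$ yields $n\binom{2|E(G)|/n}{2}\le (t-1)\binom{n}{2}$; writing $x=2|E(G)|/n$ this is the quadratic inequality $x^2-x\le (t-1)(n-1)$, so $x\le (1+o(1))\sqrt{(t-1)n}$ and therefore $|E(G)|=nx/2\le (1+o(1))\frac12\sqrt{t-1}\,n^{3/2}$.

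For the lower bound I need, as $n\to\infty$, a $K_{2,t}$-free graph with $(1-o(1))\frac12\sqrt{t-1}\,n^{3/2}$ edges. The guiding principle is that the above upper bound is tight only when $G$ is almost regular of degree $\approx\sqrt{(t-1)n}$ and almost every pair of vertices attains as close to $t-1$ common neighbours as possible; this calls for an algebraic, design-like construction over a finite field. Concretely I would fix a prime power $q$ with $(t-1)\mid q-1$, let $H\le\mathbb{F}_q^{*}$ be the subgroup of order $t-1$, and build a graph on roughly $q^2/(t-1)$ vertices (equivalence classes of $\mathbb{F}_q\times\mathbb{F}_q^{*}$ under the $H$-action) with adjacency governed by a bilinear/quadratic condition landing in $H$. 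One then checks that each vertex has degree $\approx q$ and that the common neighbours of any two vertices correspond to the solutions of a pair of equations which the subgroup structure forces to number at most $t-1$. With $n\approx q^2/(t-1)$ this gives degree $\approx q\approx\sqrt{(t-1)n}$ and hence $(1-o(1))\frac12\sqrt{t-1}\,n^{3/2}$ edges.

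I expect two obstacles, both on the lower-bound side. The first is verifying that the construction is truly $K_{2,t}$-free: bounding the number of common neighbours is the technical algebraic heart, where the subgroup/quadratic-residue structure must be used to show the relevant system has few solutions. The second is passing from the special values $n\approx q^2/(t-1)$ to all large $n$: here I would invoke the density of prime powers in the arithmetic progression $1\pmod{t-1}$, so that for every large $n$ there is an admissible $q$ with $q^2/(t-1)$ only a $(1+o(1))$ factor below $n$, take the construction on that many vertices, and add isolated vertices to reach $n$, losing only a $(1+o(1))$ factor in the edge count (isolated vertices cannot create a $K_{2,t}$). The upper bound, by contrast, is routine and will not be the difficulty.
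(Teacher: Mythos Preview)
The paper does not prove this theorem at all: it appears in Section~2 (``Earlier results'') as a quoted result of F\"uredi, with a citation and no argument. So there is no ``paper's own proof'' to compare against.

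That said, your outline is the standard route and matches F\"uredi's original argument. The upper bound is exactly the K\H{o}v\'ari--S\'os--Tur\'an double count you wrote. For the lower bound, your description --- take a prime power $q\equiv 1\pmod{t-1}$, let $H\le\mathbb{F}_q^{*}$ be the unique subgroup of order $t-1$, form the quotient of $\mathbb{F}_q\times\mathbb{F}_q^{*}$ by the $H$-action, and use a bilinear adjacency --- is precisely F\"uredi's construction; the verification that any two classes have at most $t-1$ common neighbours reduces to a linear system over $\mathbb{F}_q$ together with the $H$-orbit count, and the extension to all $n$ via density of admissible prime powers plus isolated vertices is the usual finishing step. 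Nothing is missing at the level of an outline; the only real work you correctly flag is the $K_{2,t}$-freeness check.
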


Let us continue with results concerning generalized Tur\'an problems. The first such result is due to Zykov \cite{zykov}.

\begin{thm}[Zykov, \cite{zykov}]\label{zykov}
If $r<k$, then $K_r$ is $k$-Tur\'an-good.
\end{thm}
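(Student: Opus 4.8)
The plan is to show directly that among all $K_k$-free graphs on $n$ vertices, the Tur\'an graph $T_{k-1}(n)$ maximizes the number of copies of $K_r$ (for $r<k$), which is exactly the assertion that $K_r$ is $k$-Tur\'an-good. The natural tool is a symmetrization (Zykov symmetrization) argument: given any $K_k$-free graph $G$ that maximizes $\cN(K_r,G)$, I would show that ``non-adjacency'' can be made into an equivalence relation without decreasing the $K_r$-count, which forces the optimal graph to be complete multipartite, and then a convexity/averaging step pins down the part sizes as balanced.

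First I would fix an extremal $K_k$-free graph $G$ and consider two non-adjacent vertices $u,v$. The key observation is that every copy of $K_r$ in $G$ uses at most one of $u,v$ (since they are non-adjacent, no clique contains both). Let $d_r(w)$ denote the number of copies of $K_r$ containing a vertex $w$; then replacing the neighborhood of the vertex with the smaller value, say $N(v):=N(u)$ (i.e. ``cloning'' $u$ onto $v$), does not decrease $\cN(K_r,G)$: the copies of $K_r$ through $v$ are replaced by copies through the clone of $u$, and the count through $v$ becomes $d_r(u)\ge d_r(v)$. Crucially this operation preserves $K_k$-freeness, because cloning a vertex into an independent pair cannot create a new clique (a clique using the clone corresponds to one using $u$). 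Iterating this symmetrization, non-adjacency becomes transitive, so the resulting extremal graph is complete multipartite.

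Next, since $G$ is complete multipartite and $K_k$-free, it has at most $k-1$ parts. The number of copies of $K_r$ in a complete multipartite graph with parts of sizes $a_1,\dots,a_m$ ($m\le k-1$) is a sum over $r$-subsets of parts of the products of the chosen part sizes; this is a symmetric function maximized, under the constraint $\sum a_i = n$ and $m\le k-1$, by taking exactly $k-1$ parts that are as equal as possible. I would justify this by a second exchange argument: if two parts differ in size by at least two, moving one vertex from the larger to the smaller part increases the count (a standard convexity computation on the elementary symmetric polynomials), and adding an empty part up to $k-1$ parts only helps. This identifies $T_{k-1}(n)$ as the maximizer and yields $\ex(n,K_r,K_k)=\cN(K_r,T_{k-1}(n))$.

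The main obstacle is verifying rigorously that the symmetrization step both preserves $K_k$-freeness and does not decrease the $K_r$-count simultaneously; the $K_r$-count bookkeeping must account for copies using neither, exactly one, or (impossibly) both of $u,v$, and one must check the cloning does not secretly create a copy of $K_k$ through edges among former neighbors. Once the extremal graph is known to be complete multipartite, the optimization over part sizes is routine convexity, and the result holds for all $n$ (not merely $n$ large), so no ``$n$ large enough'' caveat is actually needed here.
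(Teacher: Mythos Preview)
The paper does not give its own proof of this theorem: it is quoted in Section~2 as an existing result of Zykov (with citation \cite{zykov}) and is used as a black box. Your proposal reproduces the classical Zykov symmetrization argument, which is precisely the original proof, so there is nothing to compare --- your approach is the standard one and is correct (in fact valid for all $n$, as you note).
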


It was generalized by Ma and Qiu \cite{mq} to graphs with a color-critical edge.

\begin{thm}[Ma and Qiu; \cite{mq}]\label{MQ}
Let $F$ be a graph with a color-critical edge and chromatic number more than $r$. Then $K_r$ is $F$-Tur\'an-good.
\end{thm}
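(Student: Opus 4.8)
The plan is to prove Theorem~\ref{MQ}, which asserts that $K_r$ is $F$-Tur\'an-good whenever $F$ has a color-critical edge and $\chi(F)=k>r$; that is, $\ex(n,K_r,F)=\cN(K_r,T_{k-1}(n))$ for $n$ large. Since $\chi(F)=k$, the Tur\'an graph $T_{k-1}(n)$ is $F$-free, so it provides the lower bound $\ex(n,K_r,F)\ge \cN(K_r,T_{k-1}(n))$; the entire work is the matching upper bound. I would exploit the color-critical edge of $F$ the same way Simonovits does: because $F$ has a color-critical edge, $F$ is ``nearly bipartite after one deletion'' in the relevant sense, and this forces any $F$-free graph to be structurally close to a complete $(k-1)$-partite graph once it is dense. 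The strategy is therefore a stability-plus-cleaning argument, counting $K_r$'s rather than edges.

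\textbf{Key steps.} First I would invoke the Erd\H{o}s--Stone--Simonovits framework together with Simonovits's theorem (Theorem~\ref{sim}): an $F$-free graph $G$ on $n$ vertices that is nearly extremal for edges is, after deleting $o(n^2)$ edges, a subgraph of $T_{k-1}(n)$, and more precisely one can pass to a spanning complete $(k-1)$-partite-like skeleton plus a sparse remainder. Concretely, I would establish a stability statement: any $F$-free $G$ with $\cN(K_r,G)$ close to the maximum must have a vertex partition into $k-1$ nearly-equal classes so that almost all edges go between classes and each class is almost independent. Second, with this approximate structure in hand, I would run a \emph{symmetrization/cleaning} argument on the $K_r$-count: I want to show that moving $G$ toward the balanced complete $(k-1)$-partite graph never decreases the number of $K_r$'s while preserving $F$-freeness, so that the optimum is attained at $T_{k-1}(n)$ itself. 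The color-critical edge is exactly what guarantees that filling in all edges between the $k-1$ classes keeps the graph $F$-free: deleting a color-critical edge drops $\chi$ to $k-1$, so $F$ minus that edge embeds in a complete $(k-1)$-partite graph only using the part structure, and hence the complete $(k-1)$-partite graph on these classes contains no $F$.

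\textbf{Counting $K_r$ instead of edges.} The essential new ingredient beyond Simonovits is that we optimize the $K_r$-count. Here I would use convexity/Zykov-type symmetrization: within the $(k-1)$-partite skeleton, each copy of $K_r$ uses at most one vertex from each class, and the number of $K_r$'s in a complete multipartite graph with given part sizes is maximized when the parts are as balanced as possible (a standard symmetrization between two classes, shifting a vertex from a larger to a smaller class, increases the count of $K_r$'s). This reduces the problem to showing that the balanced complete $(k-1)$-partite graph, i.e.\ $T_{k-1}(n)$, beats every $F$-free competitor. The sparse leftover edges and the ``almost independent / almost complete'' slack from the stability step must then be shown to contribute only $o(\cN(K_r,T_{k-1}(n)))$ many extra $K_r$'s, so they cannot overturn the gain from balancing; this is where one checks that any local deviation from the clean multipartite structure can be repaired without losing $K_r$-copies.

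\textbf{Main obstacle.} The hard part will be the second step: controlling the $K_r$-copies that use the sparse ``garbage'' edges surviving the stability step, and proving that rearranging the graph into the exact Tur\'an graph is monotone for the $K_r$-count while staying $F$-free. In the edge-counting case (Theorem~\ref{sim}) the bookkeeping is linear and Simonovits's original argument handles it directly; for $K_r$ the count is a degree-$r$ polynomial in the adjacency structure, so one must argue that a single misplaced or extra edge creates at most $O(n^{r-2})$ copies of $K_r$, whereas correcting the partition imbalance gains $\Theta(n^{r-1})$ copies, making the exchange favorable once $n$ is large. Verifying this gap quantitatively, and confirming that the color-critical edge lets us complete all cross-class edges without ever producing a copy of $F$, is the crux; once these are in place the uniqueness-and-optimality conclusion follows exactly as in the $k$-Tur\'an-good case handled by Zykov's theorem (Theorem~\ref{zykov}), which is the special case $F=K_k$.
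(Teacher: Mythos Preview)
The paper does not contain a proof of Theorem~\ref{MQ}. This theorem is listed in Section~2 (``Earlier results'') as a result of Ma and Qiu~\cite{mq} and is quoted without proof, so there is nothing in the paper to compare your attempt against.

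As for the proposal itself, it is not a proof but a plan: you describe a stability-plus-cleaning strategy and a Zykov-type symmetrization on the $K_r$-count, and you correctly identify the two places where real work is needed (the quantitative stability statement and the exchange argument showing that repairing deviations gains $\Theta(n^{r-1})$ copies of $K_r$ against a loss of $O(n^{r-2})$). None of these steps is actually carried out, so the proposal cannot be judged ``correct''; at best it is a plausible outline. If you want to turn this into a proof you must actually prove the stability step for the $K_r$-count (not just the edge count) and make the exchange inequality precise; both are doable but neither is automatic from the references you cite.
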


Gy\H ori, Pach and Simonovits \cite{gypl} started the study of $k$-Tur\'an-good graphs.

\begin{thm}[Gy\H ori, Pach and Simonovits \cite{gypl}]\label{gpl} 
Let $r\ge 3$, and let $H$ be a $(k-1)$-partite graph with $m>k-1$ vertices,
containing $\lfloor m/(k-1)\rfloor$ vertex disjoint copies of $K_{k-1}$. Suppose
further that for any two vertices $u$ and $v$ in the same component of $H$, there is a sequence $A_1,\dots,A_s$ of $(k-1)$-cliques in $H$ such that $u\in A_1$, $v\in A_s$, and for any $i<s$, $A_i$ and $A_{i+1}$ share $k-2$ vertices.
Then $H$ is $k$-Tur\'an-good.
\end{thm}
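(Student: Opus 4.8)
The plan is to run Zykov's symmetrization, with the target clique replaced by $H$, and then optimize over complete multipartite host graphs. Throughout write $m=|V(H)|$ and assume $k\ge 3$. The hypotheses force $\chi(H)=k-1$: the graph $H$ is $(k-1)$-partite, and since $m>k-1$ it contains at least one copy of $K_{k-1}$, so $\chi(H)\ge k-1$. Hence $H\subseteq T_{k-1}(n)$ for $n$ large, and as $T_{k-1}(n)$ is $K_k$-free we get the lower bound $\ex(n,H,K_k)\ge \cN(H,T_{k-1}(n))$ for free. Everything below is devoted to the matching upper bound.

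Let $G$ be a $K_k$-free graph on $n$ vertices with $\cN(H,G)$ maximum. For non-adjacent $u,v$, let $G_u$ be obtained by deleting all edges at $v$ and joining $v$ to $N(u)$, and let $G_v$ be defined symmetrically. Both stay $K_k$-free exactly as in the clique case: a new $K_k$ through the moved vertex would force a copy of $K_{k-1}$ inside $N(u)$ (resp.\ $N(v)$), which together with $u$ (resp.\ $v$) is a $K_k$ in $G$. The crux of this step is the convexity inequality $2\cN(H,G)\le \cN(H,G_u)+\cN(H,G_v)$, after which one of the two operations does not decrease the $H$-count; iterating turns ``being twins'' into an equivalence relation with independent classes of equal neighbourhoods, i.e.\ makes $G$ complete multipartite without losing copies of $H$. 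I expect this to be the main obstacle. Bookkeeping the copies meeting neither, one, or both of $u,v$ reduces the inequality to a convexity statement about the copies meeting \emph{both} $u$ and $v$ (these occur, since $u,v$ land on a non-edge of $H$). This is precisely where clique-connectivity enters: it makes the proper $(k-1)$-colouring of each component of $H$ unique up to permuting colours, so the vertices of a copy of $H$ sitting in one colour class are interchangeable and two twins contribute identically, which is what yields the bound.

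We may thus assume $G$ is complete multipartite; being $K_k$-free it has at most $k-1$ parts, and with fewer than $k-1$ parts it has no $K_{k-1}$, hence no copy of $H$, and is beaten by $T_{k-1}(n)$. So $G$ has exactly $k-1$ parts of sizes $n_1,\dots,n_{k-1}$ summing to $n$. A copy of $H$ is the same datum as a proper $(k-1)$-colouring $c$ of $H$ plus an injective placement of each colour class in the matching part, so, writing $a_i(c)=|c^{-1}(i)|$ and $(x)_j=x(x-1)\cdots(x-j+1)$,
\[
\cN(H,G)=\frac{1}{|\Aut(H)|}\sum_{c}\prod_{i=1}^{k-1}(n_i)_{a_i(c)},
\]
the sum over proper $(k-1)$-colourings $c$. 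Now the assumed $\lfloor m/(k-1)\rfloor$ disjoint copies of $K_{k-1}$ each meet every colour class exactly once, so $a_i(c)\ge \lfloor m/(k-1)\rfloor=:q$ for every $i$; with $\sum_i a_i(c)=m=q(k-1)+s$ this forces the exponent multiset of \emph{every} colouring to be the maximally balanced partition $\lambda$ having $s$ parts equal to $q+1$ and $k-1-s$ parts equal to $q$. Since the set of colourings is closed under globally permuting colours, the number of colourings with a given exponent composition is the same for all orderings of $\lambda$; hence $\cN(H,G)$ is a fixed positive multiple of $\sum_{\vec a}\prod_i(n_i)_{a_i}$, the sum running over all orderings $\vec a$ of $\lambda$.

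Finally I optimize this symmetric expression by a two-variable smoothing. Fix a pair of parts $n_i,n_j$ with $n_i+n_j$ and all other $n_\ell$ held fixed, and group the orderings $\vec a$ by the values assigned to the remaining coordinates. For each group the contribution is a positive constant times a two-variable sum of the shape $(n_i)_{p}(n_j)_{p'}+(n_i)_{p'}(n_j)_{p}$ with $p,p'\in\{q,q+1\}$ and $p+p'$ fixed (a single term if $p=p'$); using $(x)_{q+1}=(x-q)(x)_q$ one computes
\[
(x)_{q+1}(y)_{q}+(x)_{q}(y)_{q+1}=(x+y-2q)\,(x)_q(y)_q,
\]
and since $(x)_q(y)_q=\prod_{t=0}^{q-1}(x-t)(y-t)$ is, for $x,y\ge q$ and $q\ge 1$, strictly maximized at $x=y$ under fixed $x+y$, every group — and hence the whole sum — is strictly maximized when $n_i=n_j$. (Here $q\ge 1$ uses the hypothesis $m>k-1$.) Smoothing every pair pushes the maximum to the balanced partition, i.e.\ to $T_{k-1}(n)$ among integer part-sizes, once $n$ is large enough that every part exceeds $q$. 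This yields $\ex(n,H,K_k)=\cN(H,T_{k-1}(n))$ with $T_{k-1}(n)$ the unique maximizer, so $H$ is $k$-Tur\'an-good.
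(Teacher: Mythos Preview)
The paper does not prove this theorem; it is quoted from Gy\H ori, Pach and Simonovits \cite{gypl} without proof, so there is no in-paper argument to compare against. I comment on your proposal on its own merits.

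Your Step 2, the optimization over complete $(k-1)$-partite hosts, is correct and tidy: the disjoint-$K_{k-1}$ hypothesis forces every colour-class-size composition to be an ordering of the balanced partition of $m$, and the two-variable smoothing via $(x)_{q+1}(y)_q+(x)_q(y)_{q+1}=(x+y-2q)(x)_q(y)_q$ finishes that part cleanly.

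The fatal problem is Step 1. The convexity inequality $2\cN(H,G)\le \cN(H,G_u)+\cN(H,G_v)$ on which the entire symmetrization rests is \emph{false} for graphs $H$ covered by the theorem. Take $k=3$ and $H=P_4$: it is bipartite, has $m=4>2$, contains two disjoint edges, and any two of its vertices are joined by a chain of edges with consecutive edges sharing a vertex, so all hypotheses hold. Now let $G$ itself be a path on four vertices $u,a,b,v$ (in this order) and symmetrize at the non-adjacent endpoints $u,v$. Then $\cN(P_4,G)=1$, while both $G_u$ and $G_v$ are the claw $K_{1,3}$, which contains no $P_4$ at all; hence $\cN(P_4,G_u)+\cN(P_4,G_v)=0<2=2\cN(P_4,G)$. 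Both symmetrizations strictly \emph{decrease} the count, so not even the weaker claim ``one of the two operations does not decrease the $H$-count'' holds. Your one-line justification does not survive this example: clique-connectivity does make the colouring of each component unique, but it does \emph{not} make the vertices within a colour class interchangeable (in $P_4$ the class $\{v_1,v_3\}$ contains a degree-$1$ and a degree-$2$ vertex), and even if it did, that alone would not yield the convexity you need.

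Consequently the reduction to complete multipartite hosts collapses, and with it the whole argument. Zykov symmetrization does work when $H$ is complete multipartite --- that is exactly the content of Proposition~\ref{gpl2.5} --- but the clique-chain hypothesis of Theorem~\ref{gpl} is genuinely weaker, and the proof in \cite{gypl} has to exploit the chain structure of $H$ directly rather than through a host-graph symmetrization.
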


\begin{corollary}[Gy\H ori, Pach and Simonovits \cite{gypl}]\label{gpl2} 
Paths and even cycles are $3$-Tur\'an-good and $T_{k-1}(m)$ is $k$-Tur\'an-good.
\end{corollary}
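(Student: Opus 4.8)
The plan is to obtain all three assertions as direct consequences of Theorem~\ref{gpl}, applied with $k=3$ for the first two families and with the given $k$ for the Tur\'an graph. In each case it suffices to verify the four hypotheses of that theorem: that $H$ is $(k-1)$-partite on $m>k-1$ vertices, that $H$ contains $\lfloor m/(k-1)\rfloor$ vertex-disjoint copies of $K_{k-1}$, and that any two vertices in the same component are joined by a chain $A_1,\dots,A_s$ of $(k-1)$-cliques in which consecutive cliques share $k-2$ vertices. The first three conditions are routine to check; the clique-connectivity condition is the only one that needs any thought.

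For paths and even cycles I take $k=3$, so that ``$(k-1)$-partite'' means bipartite and $K_{k-1}=K_2$ is a single edge. A path $P_m$ with $m\ge 3$ and an even cycle $C_{2\ell}$ with $\ell\ge 2$ are bipartite, have more than two vertices, and contain a matching with $\lfloor m/2\rfloor=\lfloor m/(k-1)\rfloor$ edges, which serves as the required family of disjoint copies of $K_{k-1}$. Here $k-2=1$, so the clique-connectivity condition only demands that consecutive edges in the chain share one endpoint, i.e.\ that the chain be a walk; since any two vertices of a path or cycle are joined along the graph by a sequence of incident edges, this is immediate. Theorem~\ref{gpl} then yields that $P_m$ and $C_{2\ell}$ are $3$-Tur\'an-good (the remaining case $P_2=K_2$ being Theorem~\ref{Tur}).

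For the Tur\'an graph $H=T_{k-1}(m)$ I apply Theorem~\ref{gpl} with the same $k$. By definition $T_{k-1}(m)$ is complete $(k-1)$-partite on $m>k-1$ vertices, hence $(k-1)$-partite. Its $(k-1)$-cliques are exactly the transversals selecting one vertex from each part; repeatedly deleting such a transversal exhausts the smallest part after $\lfloor m/(k-1)\rfloor$ steps, producing that many vertex-disjoint copies of $K_{k-1}$. For the connectivity condition, observe that two transversals share $k-2$ vertices precisely when they differ in a single part, and that any transversal can be carried to any other by changing one coordinate at a time. Thus for vertices $u,v$ I take $A_1$ to be any transversal through $u$ and $A_s$ any transversal through $v$, and interpolate by single-coordinate changes; the hypothesis $m>k-1$ guarantees a second vertex in the relevant part whenever $u$ and $v$ lie in a common part, so each such change is available. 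All hypotheses hold, and $T_{k-1}(m)$ is $k$-Tur\'an-good.

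The only step that requires care is the clique-connectivity condition. For paths and even cycles it reduces to the trivial observation that sharing $k-2=1$ vertices simply means tracing a walk. For the Tur\'an graph it reduces to the connectedness of the flip graph on transversals, where the inequality $m>k-1$ is exactly what supplies the extra vertex needed to move a representative when $u$ and $v$ share a part; I expect this to be the only point where the stated size restriction is genuinely used.
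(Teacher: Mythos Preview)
Your proposal is correct and follows exactly the intended route: the paper states this as an immediate corollary of Theorem~\ref{gpl}, and you have supplied the routine verification of its hypotheses for each of the three families. The one minor slip---claiming that $m>k-1$ is what permits the coordinate change when $u,v$ share a part in $T_{k-1}(m)$---is harmless, since that part already contains both $u$ and $v$; the inequality $m>k-1$ is needed only as a standing hypothesis of Theorem~\ref{gpl} itself.
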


\begin{proposition}[Gy\H ori, Pach and Simonovits \cite{gypl}]\label{gpl2.5} 
If $H$ is a complete multipartite graph, then $\ex(n,H,K_k)=\cN(H,G)$ for some complete $(k-1)$-partite graph $G$.
\end{proposition}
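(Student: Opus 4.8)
The plan is to prove the stronger statement that \emph{every} extremal graph can be symmetrized into a complete multipartite one, by a Zykov-type argument adapted from clique counting to counting an arbitrary complete multipartite $H$. Fix an $n$-vertex $K_k$-free graph $G$ maximizing $\cN(H,G)$. For non-adjacent vertices $u,v$, the \emph{clone} $u\to v$ replaces the neighborhood of $v$ by that of $u$ while keeping $uv$ a non-edge, so that $u,v$ become non-adjacent twins; this keeps the vertex set and is $K_k$-free-preserving, since a clique meets a pair of non-adjacent twins in at most one vertex and so $\omega$ cannot grow. This generalizes the situation of Theorem~\ref{zykov}, where $H=K_r$.

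The heart of the matter is to track $\cN(H,\cdot)$ under a clone. Splitting copies of $H$ according to which of $u,v$ they use, and writing $\sigma(w)$ for the number of copies through $w$ and $r,r'$ for the number using \emph{both} $u$ and $v$ before and after the clone, one obtains
\[
\cN(H,G')-\cN(H,G)=\bigl(\sigma(u)-\sigma(v)\bigr)+(r'-r),
\]
where $G'$ is the result of cloning $u\to v$. The key inequality is $r'\ge r$, and this is the one place the hypothesis on $H$ is used. Since $uv$ is a non-edge, any copy using both $u$ and $v$ must map them into the \emph{same} part of the complete multipartite graph $H$; the only condition this imposes on the rest of the copy is that its cross-part vertices lie in $N(u)\cap N(v)$, while all internal edges avoid $u,v$ and are therefore untouched by the clone. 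After cloning, that condition relaxes to lying in $N(u)\supseteq N(u)\cap N(v)$, so each copy through both $u,v$ survives, giving $r'\ge r$. I expect that making this template correspondence fully rigorous — in particular checking that the same-part vertices of the copy impose no further constraint tying them to $u,v$, and that the argument is insensitive to the induced/non-induced convention — is the main obstacle of the proof.

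Granting $r'\ge r$, maximality forces $\cN(H,G')\le\cN(H,G)$, hence $\sigma(u)\le\sigma(v)$; by symmetry $\sigma(u)=\sigma(v)$ for every non-adjacent pair, and then every clone in fact \emph{preserves} $\cN(H,\cdot)$ and keeps the graph extremal. I would now break ties by choosing, among all extremal graphs, one $G$ with the maximum number of edges. Cloning $u\to v$ changes the edge count by $\deg(u)-\deg(v)$ and stays extremal; running it in both directions and invoking edge-maximality yields $\deg(u)=\deg(v)$ for every non-adjacent pair.

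Finally, suppose such a $G$ is not complete multipartite. Then non-adjacency fails to be transitive, so there are vertices $x,y,z$ with $xy,yz\notin E$ and $xz\in E$; the previous step gives $\deg(x)=\deg(y)=\deg(z)$. Performing the clones $y\to x$ and then $y\to z$ in succession makes $x,y,z$ mutually twins and destroys the edge $xz$; each clone preserves the count (because $\sigma$ agrees on non-adjacent pairs of the successive extremal graphs), and a short degree bookkeeping shows the edge count rises by exactly one, contradicting the choice of $G$. Hence the optimizer is complete multipartite, and being $K_k$-free it has at most $k-1$ parts, which is the asserted form of $G$.
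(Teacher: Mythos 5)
Your proof is correct, and its engine coincides with the one the paper relies on: the paper does not reprove Proposition~\ref{gpl2.5} (it is quoted from \cite{gypl}), but it recounts that proof inside the proof of Proposition~\ref{propi} — Zykov symmetrization, where your ``clone $u\to v$'' is exactly the paper's ``symmetrize $v$ to $u$'', together with the same key counting fact that you isolate: since $H$ is complete multipartite and $uv$ is a non-edge, any copy through both $u$ and $v$ puts them in the same part, its cross-part vertices lie in $N(u)\cap N(v)$, and the copy survives the clone, giving $r'\ge r$ and hence that one of the two symmetrization directions never decreases $\cN(H,\cdot)$. (Your worry about this step is unfounded: for subgraph copies the same-part vertices impose no adjacency constraint at all, and the edges among the remaining vertices are untouched, so the injection is immediate; the paper counts non-induced copies, and in fact the argument also survives the induced convention.) Where you genuinely diverge is the endgame. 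The cited proof runs symmetrization as a process and must prove that it terminates, which is done with a secondary potential: at each step either $\cN(H,\cdot)$ or the number of pairs of vertices with identical neighbourhoods strictly increases, and at termination non-adjacency is an equivalence relation, so the graph is complete multipartite. You avoid termination entirely: you fix an extremal graph maximizing the number of edges, deduce that $\sigma$ and the degrees agree across every non-edge, and then destroy a non-transitive triple $x,y,z$ with two successive clones that preserve extremality but raise the edge count by one, a contradiction; this tie-breaking by edge count is shorter and arguably cleaner. What the process formulation buys in exchange is flexibility: in Proposition~\ref{propi} the paper needs to schedule the symmetrizations so that a prescribed independent set $A$ stays independent throughout, a refinement that is natural for an iterative process but not available to your static ``edge-maximal extremal graph'' choice. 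One cosmetic point: your conclusion is a complete $r$-partite graph with $r\le k-1$; to match the literal statement, observe that such a graph is a complete $(k-1)$-partite graph with some parts empty.
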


\begin{corollary}[Gy\H ori, Pach and Simonovits; \cite{gypl}]\label{gpl3} 
$C_4$ and $K_{2,3}$ are $k$-Tur\'an-good.
\end{corollary}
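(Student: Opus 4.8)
The plan is to combine the structural reduction of Proposition \ref{gpl2.5} with an explicit optimization over the part sizes. Both $C_4 = K_{2,2}$ and $K_{2,3}$ are complete multipartite (in fact complete bipartite), so Proposition \ref{gpl2.5} applies and yields a complete $(k-1)$-partite graph $G$ on $n$ vertices with $\ex(n,H,K_k)=\cN(H,G)$ for $H \in \{C_4, K_{2,3}\}$. (For $k=3$ the statement already follows from Corollary \ref{gpl2} and Theorem \ref{gpl}, so the new content lies in $k \ge 4$.) It therefore suffices to show that, among all complete $(k-1)$-partite graphs on $n$ vertices, the balanced graph $T_{k-1}(n)$ maximizes the number of copies of $H$.

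Next I would express this count as an explicit symmetric function of the part sizes $n_1, \dots, n_{k-1}$ with $\sum_i n_i = n$, using common neighbourhoods. Writing $d(u,v) = |N(u)\cap N(v)|$, both targets have the form $K_{2,t}$, and a copy is determined by a $2$-side $\{u,v\}$ together with a choice of $t$ common neighbours; for $t = 3$ the bipartition is unique so each copy is counted once, while for $C_4$ ($t=2$) each $4$-cycle is counted twice. Hence
\[
\cN(K_{2,t}, G) = \frac{1}{c_t}\sum_{\{u,v\}} \binom{d(u,v)}{t}, \qquad c_2 = 2,\ c_3 = 1.
\]
In a complete $(k-1)$-partite graph, two vertices in a common part of size $n_i$ have $d = n - n_i$ and there are $\binom{n_i}{2}$ such pairs, while two vertices in distinct parts $V_i, V_j$ have $d = n - n_i - n_j$ and there are $n_i n_j$ such pairs. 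This gives
\[
\cN(K_{2,t}, G) = \frac{1}{c_t}\left[\sum_i \binom{n_i}{2}\binom{n - n_i}{t} + \sum_{i<j} n_i n_j \binom{n - n_i - n_j}{t}\right].
\]

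Finally I would maximize this over $\{n_i \ge 0,\ \sum_i n_i = n\}$ by a smoothing argument: assuming two parts satisfy $n_i \ge n_j + 2$, move one vertex from $V_i$ to $V_j$ and show the count strictly increases. Iterating forces every maximizer to have all part sizes within $1$ of one another, i.e.\ to equal $T_{k-1}(n)$; in particular it rules out leaving a part empty, since an empty part and any part of size $\ge 2$ form an unbalanced pair. The main obstacle is precisely this step. The cross-terms $n_i n_j \binom{n - n_i - n_j}{t}$ make the finite difference of the count under a single-vertex move delicate, as such a move simultaneously changes the pair-counts $\binom{n_i}{2}$ and $n_i n_l$ and the common-neighbourhood sizes of many pairs. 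I expect to settle it either by computing this difference exactly and reducing its positivity to an inequality between binomial coefficients that holds whenever $n_i \ge n_j + 2$, or --- since only large $n$ is needed --- by passing to leading order, writing $n_i = \alpha_i n$ and checking that the degree-$(2+t)$ symmetric polynomial governing $\cN(K_{2,t},G)$ strictly increases under any transfer of mass toward the balanced point $\alpha_i = 1/(k-1)$.
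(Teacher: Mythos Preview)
The paper does not give its own proof of this corollary; it is quoted directly from Gy\H{o}ri--Pach--Simonovits \cite{gypl}, placed right after Proposition~\ref{gpl2.5} to indicate that it follows from that reduction together with an optimization over complete $(k-1)$-partite graphs carried out in \cite{gypl}. Your plan follows exactly this two-step route: invoke Proposition~\ref{gpl2.5} to reduce to complete $(k-1)$-partite graphs, then argue that the balanced one maximizes $\cN(K_{2,t},\,\cdot\,)$ for $t\in\{2,3\}$. So at the level of strategy you are aligned with what the paper intends.

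That said, what you have written is a plan rather than a proof: the smoothing step, which you yourself flag as ``the main obstacle'', is not carried out. Your explicit formula is correct, and the asymptotic route (write $n_i=\alpha_i n$, show the leading polynomial is strictly maximized at the balanced point) would indeed suffice here, since ``$k$-Tur\'an-good'' is only required for $n$ large enough. But you must actually verify strict concavity (or the one-step transfer inequality) of
\[
\sum_i \binom{n_i}{2}\binom{n-n_i}{t}+\sum_{i<j} n_i n_j\binom{n-n_i-n_j}{t}
\]
under the move $n_i\mapsto n_i-1$, $n_j\mapsto n_j+1$ when $n_i\ge n_j+2$; as you note, the cross-terms make this nontrivial, and neither of your two proposed routes has been executed. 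Until that computation is written down, the argument is incomplete. Also be careful with the edge case of an empty part: your transfer argument needs $n_j\ge 0$ but the inequality must still be strict there, so check that separately rather than folding it into the generic $n_i\ge n_j+2$ case.
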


A result similar to Theorem \ref{gpl} was obtained by Gerbner and Palmer \cite{gp2}.

\begin{thm}[Gerbner, Palmer \cite{gp2}]\label{turgoodgp}
	Let $H$ be a $k$-Tur\'an-good graph. Let $H'$ be any graph constructed from $H$ in the following way.
	Choose a complete subgraph of $H$ with vertex set $X$, add a vertex-disjoint copy of $K_{k-1}$ to $H$ and join the vertices in $X$ to the vertices of $K_{k-1}$ by edges arbitrarily.
	Then $H'$ is $k$-Tur\'an-good.
\end{thm}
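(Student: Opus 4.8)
The plan is to express $\cN(H',G)$ as a weighted count over copies of $H$ and then reduce to the $k$-Tur\'an-goodness of $H$ together with a convexity argument. Write $t=|X|$, and note that in the only nontrivial case $H'$ is itself $K_k$-free (otherwise both $\ex(n,H',K_k)$ and $\cN(H',T_{k-1}(n))$ vanish and there is nothing to prove), so $H'$ is properly $(k-1)$-colourable with the pendant clique $R\cong K_{k-1}$ necessarily receiving all $k-1$ colours. Every copy of $H'$ in a host graph $G$ restricts to a copy of $H$ whose image of $X$ is a $t$-clique $Q$, together with a pendant $(k-1)$-clique $R$ realizing the prescribed bipartite attachment to $Q$ and avoiding the remaining vertices of the $H$-copy. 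Hence, up to a constant $c=c(H,H')$ that accounts for automorphisms and is the same for every host graph, $\cN(H',G)=c\sum_{H_0}\mathrm{ext}_G(H_0)$, where the sum is over copies $H_0$ of $H$ in $G$ and $\mathrm{ext}_G(H_0)$ counts the admissible pendants.

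Next I would analyze the extension count. For a fixed $t$-clique $Q$ let $f_G(Q)$ be the number of $(k-1)$-cliques $R$ in $G$ whose attachment to $Q$ matches the pattern; then $\mathrm{ext}_G(H_0)=f_G(X(H_0))$ up to a correction of lower order in $n$ coming from forbidding the $O(1)$ vertices of $H_0$. The admissible pendants are copies of $K_{k-1}$ lying inside prescribed common neighbourhoods of vertices of $Q$; since $G$ is $K_k$-free, the common neighbourhood of any $j$-subset of $Q$ is $K_{k-j}$-free, which both bounds $f_G(Q)$ and, on $T_{k-1}(n)$, forces $Q$ to be a transversal clique (one vertex in each of $t$ parts) and $R$ to be a transversal $K_{k-1}$ in which the unique non-neighbour of each $x\in X$ lies in $x$'s own part. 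This makes $f_{T_{k-1}(n)}(Q)$ an explicit product of part-sizes and identifies exactly which copies of $H$ support the maximal number of pendants.

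The crux, and the step I expect to be hardest, is the global optimization: $\cN(H',G)$ couples the number of copies of $H$ sitting on each clique $Q$ with the weight $f_G(Q)$, so its maximality is not immediate from the $k$-Tur\'an-goodness of $H$ alone. I would proceed in two stages. First, reduce to complete $(k-1)$-partite hosts: the essential leverage is that the pendant is a \emph{full} $K_{k-1}$, so it must occupy all $k-1$ colour classes, forcing every copy of $H'$ to be rainbow on $R$; this rigidity should let a Zykov-type symmetrization (merging non-adjacent vertices into twins) only increase $\cN(H',\cdot)$ while preserving $K_k$-freeness. Second, among complete $(k-1)$-partite graphs with part sizes $a_1,\dots,a_{k-1}$ summing to $n$, I would write $\cN(H',\cdot)$ as a symmetric polynomial in the $a_i$ and show, for $n$ large, that it is maximized by the balanced partition $T_{k-1}(n)$; the $k$-Tur\'an-goodness of $H$ supplies precisely this for the $H$-factor, and the extra product of part-sizes coming from $f$ is controlled by the same power-mean/convexity estimate. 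The main obstacle is making the symmetrization rigorous for the non-clique graph $H'$, namely ensuring the merge operation does not destroy copies of $H'$ faster than it creates them; this is exactly where the rigidity of the full pendant clique $R$ must be used decisively.
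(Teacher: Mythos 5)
You are proving Theorem \ref{turgoodgp}, which this paper does not actually prove: it is quoted from Gerbner and Palmer \cite{gp2}, so your proposal must be measured against the argument given there. Your opening step (the exact pair-counting identity expressing $\cN(H',G)$ via pairs consisting of a copy of $H$ and a vertex-disjoint pendant $K_{k-1}$ with prescribed attachment) is fine and matches the start of the known proof. The genuine gap is your ``crux'': the reduction to complete $(k-1)$-partite hosts by Zykov symmetrization. Symmetrization is only known to be monotone when the graph being counted is complete multipartite --- that is exactly Proposition \ref{gpl2.5} --- and it can destroy copies of graphs that are not, because a copy using both $u$ and $v$ in roles with different neighbourhoods disappears when $u$ is made a twin of $v$. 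The graph $H'$ contains an \emph{arbitrary} $k$-Tur\'an-good graph $H$, and the copies of $H'$ that symmetrization destroys are destroyed inside their $H$-part; the ``rigidity'' of the rainbow pendant $R$ is irrelevant to that loss. Concretely: taking $H=P_3$, $k=3$, $X$ an endpoint, and joining $X$ to one vertex of the pendant $K_2$ gives $H'=P_5$, so your stage~1 asserts that plain symmetrization proves paths are $3$-Tur\'an-good --- precisely the situation in which Gy\H ori, Pach and Simonovits \cite{gypl} had to introduce the clique-chain hypothesis of Theorem \ref{gpl} because symmetrization does not go through. Your stage~2 is also unsubstantiated: the sum $\sum_{H_0} f_G(Q(H_0))$ does not factor as $\cN(H,G)$ times a part-size product, and since Tur\'an-goodness is an exact statement, the ``correction of lower order in $n$'' you permit yourself is not admissible.

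What the proof in \cite{gp2} does instead, and what your proposal is missing, is a decoupling of the pair count into three \emph{exact} inequalities that are simultaneously tight on the Tur\'an graph. Fix the pendant $R\cong K_{k-1}$ first: (i) $G-V(R)$ is a $K_k$-free graph on $n-k+1$ vertices, so by the $k$-Tur\'an-goodness of $H$ it contains at most $\cN(H,T_{k-1}(n-k+1))$ copies of $H$; (ii) the number of choices for $R$ is at most $\cN(K_{k-1},T_{k-1}(n))$ by Zykov's theorem (Theorem \ref{zykov}); (iii) for a fixed pair, $K_k$-freeness forces any $j$ vertices of the clique $Q$ to have at least $j$ non-neighbours in $R$ in total (otherwise their common neighbourhood in $R$ together with them spans a $K_k$), so by Hall's theorem the non-adjacencies between $Q$ and $R$ contain a $Q$-saturating matching, and hence the number of admissible attachment bijections is at most the constant achieved in $T_{k-1}(n)$, where the non-adjacencies form exactly such a matching. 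The reason everything matches up exactly is the identity that $T_{k-1}(n)$ minus a transversal $K_{k-1}$ is precisely $T_{k-1}(n-k+1)$. This replaces your symmetrization-plus-convexity plan entirely, and it is the step your proposal does not supply.
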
 	

As a single vertex is a complete graph, this implies that $T_1$ is also 4-Tur\'an-good.


\begin{proposition}[Gerbner and Palmer \cite{gp2}]\label{turgoo2}
$P_4$ and $C_4$ are $B_2$-Tur\'an-good.
\end{proposition}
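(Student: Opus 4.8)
Throughout write $a=\lceil n/2\rceil$ and $b=\lfloor n/2\rfloor$, so that $T_2(n)=K_{a,b}$, and record the two targets by a direct count: $\cN(C_4,K_{a,b})=\binom a2\binom b2$ and $\cN(P_4,K_{a,b})=ab(a-1)(b-1)$. Both $P_4$ and $C_4$ are triangle-free, hence $B_2$-free, so the statement is meaningful. Since $B_2=K_4-e$ is $3$-chromatic with a color-critical edge (deleting the edge joining the two degree-$3$ vertices leaves a $C_4$), Theorem~\ref{sim} applies: every $B_2$-free graph on $n$ vertices has at most $\lfloor n^2/4\rfloor$ edges, with $T_2(n)$ the unique extremal graph, and it comes with the usual stability companion, namely that a $B_2$-free graph with $(1-o(1))n^2/4$ edges differs from $T_2(n)$ in only $o(n^2)$ edges. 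This is the engine of the proof.

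The starting point is a structural consequence of diamond-freeness, which I would isolate as a lemma: in a $B_2$-free graph every edge lies in at most one triangle (two triangles sharing an edge form a $B_2$), equivalently $N(v)$ spans a graph of maximum degree $\le 1$ for every $v$. Two facts follow. First, the triangles of $G$ are pairwise edge-disjoint, so $3\cN(K_3,G)\le e(G)\le n^2/4$ and triangles are scarce. Second, both diagonals of every $4$-cycle are non-edges, which yields the identities \[ \cN(C_4,G)=\tfrac12\sum_{\{u,v\}}\binom{\mathrm{codeg}(u,v)}{2},\qquad \cN(P_4,G)=\sum_{uv\in E(G)}(d(u)-1)(d(v)-1)-3\cN(K_3,G), \] in which, for $C_4$, adjacent pairs contribute nothing.

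With these in hand my plan is the standard two-phase argument. Phase one (asymptotics): take an extremal $G$; since $T_2(n)$ is $B_2$-free we have $\cN(H,G)=\Theta(n^4)$, and a supersaturation/convexity argument from the identities above, together with the edge bound, shows that $\cN(H,G)$ can reach $\Theta(n^4)$ only if $e(G)=(1-o(1))n^2/4$, whence the stability form of Theorem~\ref{sim} presents $G$ as a complete balanced bipartite graph up to $o(n^2)$ edited edges; fix the resulting near-bipartition $V=X\cup Y$. Phase two (exact finish): compare $\cN(H,G)$ with $\cN(H,K_{|X|,|Y|})$. The quantitative mechanism is already visible in a single move: if $x_1,x_2\in X$ are joined by an intra-part edge, diamond-freeness forces them to share at most one common neighbour, so almost all of the $\approx|Y|$ potential cross-edges at $x_1,x_2$ must be absent; since every copy of the bipartite $H$ is carried by cross-edges, such an internal edge costs, on balance, far more copies than it supports. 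I would therefore argue that replacing $G$ by $K_{|X|,|Y|}$ does not decrease $\cN(H,\cdot)$ and keeps the graph $B_2$-free, reducing the optimum to some $K_{s,n-s}$; a final convexity computation on $s\mapsto\cN(H,K_{s,n-s})$ places the maximum at $s=\lceil n/2\rceil$, i.e.\ at $T_2(n)$.

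The step I expect to be the main obstacle is turning the $o(n^2)$ stability bound into an exact equality for all large $n$. One compares two graphs both having $\cN(H,\cdot)=(1+o(1))\cN(H,T_2(n))$, so the lower-order terms decide, and a clean \emph{defect inequality} is needed: one must account precisely for the copies of $H$ that genuinely run through intra-part edges or through the scarce triangle edges, and show they are dominated by the copies recovered when the missing cross-edges are restored. Controlling several exceptional edges simultaneously, so that the local moves do not interfere and no exotic near-bipartite $B_2$-free configuration beats $K_{a,b}$, is where the real care lies; the concluding balancing of the two parts is then routine convexity.
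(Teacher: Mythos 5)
First, note that this paper does not prove Proposition \ref{turgoo2} at all: it is quoted from Gerbner and Palmer \cite{gp2}. So your attempt must be judged on its own merits and against the natural argument in the cited source, which is a short double-counting argument, not a stability argument. Judged that way, your proposal has genuine gaps. Phase one is false as literally stated: having $\Theta(n^4)$ copies of $H$ does \emph{not} force $e(G)=(1-o(1))n^2/4$. For example, $K_{n/4,n/4}$ together with $n/2$ isolated vertices is $B_2$-free, has only $n^2/16$ edges, and contains $\Theta(n^4)$ copies of both $C_4$ and $P_4$. What you need is that $\cN(H,G)\ge \cN(H,T_2(n))$ forces $e(G)\ge (1-o(1))n^2/4$, and the ``convexity from the identities'' you invoke does not deliver it: upper-bounding $\sum\binom{\mathrm{codeg}(u,v)}{2}$ subject to the constraints you have allows codegrees to concentrate and loses a constant factor (roughly $4$), which is fatal in an argument where the target and the bound differ only in lower-order terms. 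The inequality that actually works is of a different kind: in a $K_4$-free graph every pair of disjoint edges is the pair of opposite edges of at most one $C_4$, whence $2\cN(C_4,G)\le \cN(M_2,G)$, and likewise $\cN(P_4,G)\le 2\cN(M_2,G)$ once $G$ has no diamond (this is exactly the induced-subgraph bookkeeping in the proof of Proposition \ref{p4k4}). Your sketch never produces an inequality of this strength.

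Second, and more seriously, Phase two---upgrading the $o(n^2)$-edit stability structure to exact equality---is the entire difficulty of any stability-based proof, and you do not carry it out; you only name the ``defect inequality'' that would be required. This is not a routine step one may defer: Section 4 of this paper (Lemma \ref{progi}, Lemma \ref{ccb2}, and the proof of Theorem \ref{p3cc}) exists precisely because passing from stability to exactness is delicate even for $P_3$ and $S_4$. As written, your text is a plan whose crucial step is missing. The irony is that for this proposition no stability is needed at all: by Theorem \ref{matc}, $M_2$ is $B_2$-Tur\'an-good, so $\cN(M_2,G)\le \cN(M_2,T_2(n))$ for large $n$; and writing $T_2(n)=K_{a,b}$ one computes $\cN(M_2,T_2(n))=\frac{ab(a-1)(b-1)}{2}$, $\cN(C_4,T_2(n))=\binom{a}{2}\binom{b}{2}$ and $\cN(P_4,T_2(n))=ab(a-1)(b-1)$, so that $\cN(M_2,T_2(n))=2\cN(C_4,T_2(n))=\frac{1}{2}\cN(P_4,T_2(n))$. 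Chaining these with the two counting inequalities above gives $\cN(C_4,G)\le \cN(C_4,T_2(n))$ and $\cN(P_4,G)\le \cN(P_4,T_2(n))$ in three lines. That is essentially the route of \cite{gp2}, and it is what your two-phase scaffolding should be replaced by.
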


Cambie, de Verclos and Kang \cite{cvk} studied the case of forbidden stars.

\begin{proposition}[Cambie, de Verclos and Kang \cite{cvk}]\label{regu} Let $T$ be a tree on $k$ vertices and $n$ be large enough. If $nr$ is even, let $G$ be an arbitrary $(r-1)$-regular graph with diameter more than $k$. If $nr$ is odd, let $G$ be an arbitrary graph with diameter more than $k$ that has $n-1$ vertices of degree $r-1$ and one vertex of degree $r-2$. (Note that $G$ exists because $n$ is large enough.) Then $\ex(n,T,S_r)=\cN(T,G)$.
\end{proposition}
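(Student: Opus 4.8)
The plan is to recast the problem in terms of maximum degree and then run a homomorphism-counting argument. A graph is $S_r$-free if and only if it contains no vertex of degree $r-1$, that is, if and only if its maximum degree is at most $r-2$; accordingly I set $d:=r-2$, so that $\ex(n,T,S_r)$ is the maximum number of copies of the tree $T$ over all $n$-vertex graphs with $\Delta\le d$. Since $n(r-2)\equiv nr\pmod 2$, the parity hypothesis ``$nr$ even'' is exactly the condition under which an $n$-vertex $d$-regular graph exists, and when it fails the best one can do is make all degrees equal to $d$ except for one vertex of degree $d-1$. This identifies the graphs $G$ of the statement as the near-$d$-regular graphs of large diameter, and reduces everything to showing that such a $G$ maximizes $\cN(T,G)$ and to computing the common value.

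For the upper bound I would fix a root $\rho$ of $T$ and list $V(T)=\{\rho=x_1,\dots,x_k\}$ in breadth-first order, so each $x_i$ with $i\ge 2$ has a unique earlier neighbour (its parent). An embedding of $T$ into $G$ is built by choosing the image of $\rho$ (at most $n$ ways) and then, for $i\ge 2$, choosing the image of $x_i$ among the still-unused neighbours of its parent's image; as $\Delta(G)\le d$ there are at most $d$ options at each step, which already yields $\cN(T,G)\le \frac{1}{|\Aut(T)|}\,n\,d^{\,k-1}$. The refinement I need is that the number of embeddings rooted at a fixed vertex is at most the number $E_d(T)$ of rooted embeddings of $T$ into the infinite $d$-regular tree (a quantity that the standard labelled-copy identity shows to be independent of the choice of $\rho$). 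An embedding falls below the infinite-tree count precisely when two branches are forced to meet, and such a meeting is witnessed by a cycle of length at most $k$ in $G$. Summing over root images gives $\cN(T,G)\le \frac{n}{|\Aut(T)|}E_d(T)$ for every $d$-regular $G$, and equality holds whenever $G$ has no cycle of length at most $k$; optimising the degree sequence subject to $\deg\le d$ and the handshake parity pins this down as the maximum and accounts for the single deficient vertex in the odd case.

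For the matching lower bound I would evaluate $\cN(T,G)$ on the graph $G$ of the statement. When $G$ has no short cycle every ball of radius $k$ is a tree, so each vertex is the image of $\rho$ in exactly $E_d(T)$ embeddings, and summation recovers the upper bound (the lone defect vertex in the odd case contributing a fixed $O(d^{k-1})$ correction), giving the clean value $\ex(n,T,S_r)=\frac{n}{|\Aut(T)|}E_{r-2}(T)$. The role of the ``diameter greater than $k$'' hypothesis is to force this local tree structure in the case that actually occurs in this paper, namely $d=2$ (that is, $F=S_4$): a connected $2$-regular graph of diameter more than $k$ is a single cycle $C_n$ with $n>2k$, which has girth $n>k$ and hence no short cycle. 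In that case I would simply verify $\cN(P_3,C_n)$ and $\cN(P_4,C_n)$ against $\frac{n}{|\Aut(T)|}E_2(T)$ by inspection.

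The hard part will be the short-cycle deficiency in the upper bound. Short cycles strictly decrease the number of tree copies --- for a $d$-regular $G$ with $t$ triangles one computes $\cN(P_4,G)=\tfrac12 nd(d-1)^2-3t$, and longer trees pick up analogous negative contributions from cycles up to length $k$ --- so exact equality forces the extremal $G$ to avoid every cycle of length at most $k$. The delicate point is therefore to guarantee that the near-$d$-regular graphs singled out by the large-diameter condition are genuinely locally tree-like at scale $k$, since a large diameter by itself does not forbid short cycles; for $d=2$ this is automatic from the cycle structure above, and securing it in general is exactly what makes the statement go through. I would close by recording the common extremal value as $\frac{n}{|\Aut(T)|}E_{r-2}(T)$, adjusted by the contribution of the deficient vertex when $nr$ is odd.
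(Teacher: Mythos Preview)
This proposition is a cited result of Cambie, de Verclos and Kang; the paper lists it in the ``Earlier results'' section without proof, so there is no in-paper argument to compare your attempt against.

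On the attempt itself: the overall strategy --- bound rooted embeddings by the count $E_d(T)$ in the infinite $d$-regular tree, then realise the bound on a near-regular graph with no short cycles --- is the natural one, and for the only instance the paper actually invokes ($r=4$, counting $P_3$ and $P_4$) it goes through once ``diameter $>k$'' is read as forcing a single long cycle. But the gap you yourself flag is genuine and not a mere technicality. Large diameter does \emph{not} force large girth for $d\ge 3$ (nor even for $d=2$ if disconnected graphs are admitted, since a disjoint union of a triangle and a long cycle is $2$-regular with infinite diameter), and the copy count really does depend on short cycles: your own formula $\cN(P_4,G)=\tfrac12\,nd(d-1)^2-3t$ already shows that two $d$-regular graphs of the same diameter but different triangle counts give different values. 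Hence ``securing it in general'' is not just delicate --- under the hypothesis as literally written the conclusion $\ex(n,T,S_r)=\cN(T,G)$ cannot hold for an \emph{arbitrary} such $G$, so what your argument actually establishes (extremality of high-girth near-regular graphs) is not the statement as printed. Note also a mismatch you pass over silently: $S_r$-free means $\Delta\le r-2$, yet the proposition speaks of $(r-1)$-regular $G$; since the parity hypothesis ``$nr$ even'' is equivalent to ``$n(r-2)$ even'' but not to ``$n(r-1)$ even'', your reading $d=r-2$ is presumably the intended one and the printed degrees appear to be off by one.
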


Gy\H ori, Salia, Tompkins and Zamora \cite{gstz} studied the case of forbidden paths.

\begin{thm}[Gy\H ori, Salia, Tompkins and Zamora, \cite{gstz}]\label{paths} 
We have $\ex(n,P_3,P_k)=\cN(P_3,G_{n,k-1, \lfloor (k-2)/2\rfloor})$.
\end{thm}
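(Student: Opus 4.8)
The plan is to count copies of $P_3$ by their central vertex, so that $\cN(P_3,G)=\sum_{v}\binom{d(v)}{2}$, and the task becomes to maximize $\sum_v\binom{d(v)}{2}$ over all $P_k$-free graphs $G$ on $n$ vertices. First I would record what the claimed extremum looks like. Writing $\ell=\lfloor (k-2)/2\rfloor$, the graph $G_{n,k-1,\ell}$ consists of $\ell$ universal vertices (the class $B$), a clique $C$ of size $1$ or $2$ according to the parity of $k$, and an independent set $A$ joined only to $B$; the $\ell$ universal vertices contribute the dominant term $\ell\binom{n-1}{2}$, while $C$ and $A$ contribute only $O(n)$. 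I would then reduce to connected $G$: the cherry count is additive over components, and since $m\mapsto \cN(P_3,G_{m,k-1,\ell})$ is a convex quadratic with positive leading coefficient $\ell/2$ (when $\ell\ge 1$, i.e.\ $k\ge 4$), for $n$ large a disconnected graph is beaten by concentrating all vertices into a single component, the $O(n)$ error terms being absorbed by the quadratic gain. The cases $k=3$ (a matching, giving $0$) and $k=4$ (where $G_{n,3,1}$ is the star $K_{1,n-1}$ with $\binom{n-1}{2}$ cherries) serve as base cases.

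The heart of the matter is that $P_k$-freeness severely restricts the high-degree vertices, and the basic mechanism is an interleaving (rotation–extension) argument in the spirit of Erd\H os and Gallai \cite{erga}. Given $m$ vertices $b_1,\dots,b_m$ of large degree, one greedily weaves them into an alternating path $x_0,b_1,x_1,b_2,\dots,b_m,x_m$ on $2m+1$ vertices, where the $x_i$ are distinct vertices chosen from the neighborhoods of the $b_i$. In particular, once $n$ is large, $G$ can contain at most $\ell$ universal vertices: $\ell+1$ of them would already produce a copy of $P_{2\ell+3}$, and $2\ell+3\ge k$ for both parities, yielding a forbidden $P_k$. I would then push this further, running the rotation–extension technique on a longest path $v_1\cdots v_t$ (with $t\le k-1$) to pin down the neighborhood structure of all vertices, mirroring the Balister–Gy\H ori–Lehel–Schelp description of edge-extremal connected $P_k$-free graphs, whose extremal graph is precisely $G_{n,k-1,\ell}$.

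With the cap on near-universal vertices established, I would bound $\sum_v\binom{d(v)}{2}$ by separating the (at most $\ell$) vertices of very high degree from the rest, controlling the latter through the Erd\H os–Gallai edge bound (Theorem \ref{erga}), namely $2e\le (k-2)n$, and then using convexity of $\binom{\cdot}{2}$ to argue that the count is maximized by making the $\ell$ exceptional vertices universal and all remaining vertices of smallest possible degree.

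The main obstacle lies in the medium-degree regime. The two bounds ``at most $\ell$ universal vertices'' and ``$2e\le (k-2)n$'' are individually far too weak: together they would still permit $\ell$ universal vertices alongside a linear number of vertices of degree about $n/2$, which would contribute a spurious extra $\Theta(n^2)$ cherries. What actually forbids this is the interaction constraint — once $\ell$ universal vertices are present, essentially no further vertex may have large degree (beyond the tiny clique $C$), again by the interleaving argument — and quantifying this tradeoff uniformly across all degree scales is the crux of the proof. A related subtlety is parity: the naive induction ``delete a universal vertex $v$, so that $G-v$ is $P_{k-1}$-free, and recurse'' over-counts by $\binom{n-1}{2}$ when $k$ is odd, because for the genuine extremum $G-v$ is in fact $P_{k-2}$-free. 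Handling this cleanly forces one to track the clique $C$ explicitly and to carry the \emph{exact} two-parameter value $\cN(P_3,G_{n,k-1,\ell})$ through the induction rather than merely an asymptotic one.
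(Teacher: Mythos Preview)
This theorem is not proved in the paper: it appears in Section~2 (``Earlier results'') as a result quoted from Gy\H ori, Salia, Tompkins and Zamora \cite{gstz}, with no argument given. There is therefore no in-paper proof to compare your proposal against.

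As to the proposal itself, it is an outline rather than a proof, and you are candid about this. You correctly identify the translation $\cN(P_3,G)=\sum_v\binom{d(v)}{2}$, the reduction to connected graphs, and the relevance of rotation--extension in the style of \cite{erga}. But you also explicitly flag the crux --- controlling the medium-degree vertices once $\ell$ near-universal vertices are present --- and do not carry it out; likewise the parity issue is named but not resolved. So as written this is a reasonable roadmap toward the argument in \cite{gstz}, not a self-contained proof, and the substantive work (the structural step showing that after the $\ell$ universal vertices the remainder is essentially the small clique $C$ plus an independent set) is still missing.
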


\begin{thm}[Gy\H ori, Salia, Tompkins and Zamora, \cite{gstz}]\label{starpaths} If $k\ge 4$, $r\ge 3$ and $n$ is large enough, then 
$\ex(n,S_r,P_k)=\cN(S_r,G_{n,k-1, \lfloor (k-2)/2\rfloor})$.
\end{thm}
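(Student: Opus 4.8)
Since $\cN(S_r,G)=\sum_{v\in V(G)}\binom{d(v)}{r-1}$, the quantity to be maximized depends only on the degree sequence, and for $r\ge3$ the summand $x\mapsto\binom{x}{r-1}$ is convex and increasing, so the objective rewards concentrating degree on a few vertices. Set $s:=\lfloor(k-2)/2\rfloor$, and note $k\ge4$ gives $s\ge1$. For the lower bound I would first check that $G_{n,k-1,s}$ is $P_k$-free: its set $B$ consists of $s$ vertices of degree $n-1$, and deleting $B$ leaves the independent set $A$ together with the clique $C$ of size $k-1-2s\in\{1,2\}$, with no $A$--$C$ edge. Any path of $G$, after deleting its at most $s$ vertices in $B$, splits into at most $s+1$ segments, each a single vertex of $A$ or a subpath of $C$; hence it spans at most $|C|+2s=k-1$ vertices, so no $P_k$ exists. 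Evaluating $\cN(S_r,G_{n,k-1,s})=s\binom{n-1}{r-1}+(k-1-2s)\binom{k-2-s}{r-1}+(n-k+1+s)\binom{s}{r-1}$ then gives $\ex(n,S_r,P_k)\ge\cN(S_r,G_{n,k-1,s})$, with main term $s\binom{n-1}{r-1}$.

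For the upper bound the first point to record is that the Erd\H{o}s--Gallai edge bound (Theorem \ref{erga}), which only yields $\sum_v d(v)\le(k-2)n+O(1)$, is too weak: maximizing the convex sum subject to this single linear constraint and to $d(v)\le n-1$ would place about $k-2$ vertices at degree $n-1$, overshooting the truth by a factor of roughly two. The path condition must therefore be exploited beyond its effect on the number of edges. As a first structural step I would prove that at most $s$ vertices have degree at least $\tfrac n2+k$: if $v_1,\dots,v_{s+1}$ were such vertices, then for $n$ large each consecutive pair has more than $k$ common neighbours, so one can greedily choose distinct connectors $u_i\in N(v_i)\cap N(v_{i+1})$ and form the path $v_1u_1v_2\cdots u_sv_{s+1}$ on $2s+1$ vertices, then extend it by one or two further neighbours at the ends to reach $k$ vertices, contradicting $P_k$-freeness. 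This already bounds the contribution of the genuinely large vertices by $s\binom{n-1}{r-1}$.

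The crux, and the step I expect to be hardest, is to control the \emph{intermediate} vertices, those of degree between a large constant and $\tfrac n2$: a single vertex of degree $\sqrt n$ already produces $\Theta(n^{(r-1)/2})$ copies of $S_r$, which for $r\ge4$ far exceeds the $O(n)$ room available for lower-order terms, so these vertices cannot be absorbed crudely. The behaviour to beat is a disjoint union of $t$ copies of $K_{s,m}$ with $m\approx n/t$, which has $ts$ vertices of linear degree $\approx n/t$ and total count $\approx \tfrac{s}{t^{r-2}}\binom{n-1}{r-1}$; convexity makes this decreasing in $t$, so that concentrating all the degree onto a single dominating set of size $s$ is optimal, but turning this heuristic into a proof for \emph{arbitrary} $P_k$-free graphs is the real work. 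I would do this by importing the fine structure of extremal $P_k$-free graphs in the spirit of Kopylov and Balister--Gy\H{o}ri--Lehel--Schelp: reduce to a single connected (indeed $2$-connected) block by a superadditivity argument, use that an extremal $2$-connected $P_k$-free block is a ``book'' with a bounded number of dominating vertices over an independent set and a bounded clique (so that all non-dominating degrees are bounded by a constant), and finally run a convexity optimization over the decomposition showing that exactly $s$ dominating vertices maximize the objective.

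Finally, because the claimed identity is exact for large $n$, the $O(n)$ error must be resolved into the precise contribution of $A$ and $C$. Having fixed a dominating set $B$ of $s$ universal vertices, $G-B$ may contain no family of $s+1$ vertex-disjoint paths spanning more than $\lceil k/2\rceil$ vertices in total, since otherwise stitching them through $B$ would create a $P_k$; a short case analysis shows this forces $G-B$ to be edgeless when $k$ is even and to consist of a single edge together with isolated vertices when $k$ is odd, exactly reproducing the clique $C$ of size $1$ or $2$ and recovering the lower-order terms. The case $r=3$ is already contained in Theorem \ref{paths} (as $S_3=P_3$), which I would use both as the base instance and as a consistency check for the residual count.
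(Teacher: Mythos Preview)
The paper does not contain a proof of this statement: Theorem~\ref{starpaths} sits in Section~2 (``Earlier results'') and is simply quoted from Gy\H{o}ri, Salia, Tompkins and Zamora~\cite{gstz}, so there is nothing here to compare your attempt against. The paper uses the result only as a black box (specifically, the special case $k=4$, where it reduces to the trivial observation that every $P_4$-free component is a triangle or a star, so $\ex(n,S_r,P_4)=\binom{n-1}{r-1}$).

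As a standalone attempt, what you have written is a plausible plan rather than a proof. The lower bound and the bound of at most $s$ vertices of degree $\ge n/2+k$ are fine. But you yourself flag the decisive step---controlling vertices of intermediate degree---as ``the real work'' and then only gesture at importing Kopylov/Balister--Gy\H{o}ri--Lehel--Schelp structure; nothing is actually proved there. Similarly, the final ``short case analysis'' forcing $G-B$ to be edgeless (even $k$) or a single edge plus isolated vertices (odd $k$) is asserted, not carried out, and in fact is not quite right as stated: you need that no collection of vertex-disjoint paths in $G-B$ can be threaded through $B$ to a $P_k$, but you have not yet established that $B$ consists of \emph{universal} vertices, only that there are at most $s$ high-degree vertices. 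Turning the sketch into a proof requires exactly the stability/structure work done in~\cite{gstz}.
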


We remark that in case $k=4$, $G_{n,k-1, \lfloor (k-2)/2\rfloor}=S_n$, thus $\ex(n,S_k,P_4)=\binom{n-1}{k-1}$. Instead of using the above theorem, one could easily deduce this from the fact that every component of a $P_4$-free graph is either a triangle or a star.


\begin{thm}[Chakraborti and Chen \cite{cc}]\label{chch}
For every $n$, $k$ and $r$ we have $\ex(n,K_k,P_r)=\cN(K_k,D(r-1,n))$. 
\end{thm}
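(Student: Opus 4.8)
The plan is to prove matching upper and lower bounds, with the upper bound carrying all the weight. For the lower bound I would first check that the claimed extremizer is admissible and compute its count. Each component of $D(r-1,n)$ is a clique on at most $r-1$ vertices, and the longest path in $K_m$ has exactly $m$ vertices; hence $D(r-1,n)$ contains no $P_r$, and
\[
\cN(K_k,D(r-1,n))=\left\lfloor \tfrac{n}{r-1}\right\rfloor\binom{r-1}{k}+\binom{\,n-(r-1)\lfloor n/(r-1)\rfloor\,}{k}=:g(n).
\]
At $k=2$ this recovers the edge bound of Faudree and Schelp (Theorem~\ref{erga}), so Theorem~\ref{chch} should be read as the clique-counting lift of that statement, and I would try to lift its proof machinery.

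The first reduction is to connected graphs. Every copy of $K_k$ is connected, hence lies inside a single component, so $\cN(K_k,G)=\sum_i \cN(K_k,G_i)$ over the components $G_i$. It therefore suffices to show that $g$ is superadditive, i.e.\ $\sum_i g(n_i)\le g(\sum_i n_i)$: then the (inductively assumed) connected bound $\cN(K_k,G_i)\le g(n_i)$ settles the disconnected case. Superadditivity follows from the discrete convexity of $x\mapsto\binom{x}{k}$ together with the observation that $g$ is built from the most \emph{spread out} partition of $n$ into parts of size at most $r-1$; concretely, replacing two parts $\{s_a,s_b\}$ of total size at least $r-1$ by $\{\,r-1,\ s_a+s_b-(r-1)\,\}$ only increases $\sum\binom{\cdot}{k}$ by Karamata's inequality. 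So I may assume $G$ is connected.

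For connected $G$ the idea is Kopylov's symmetrization, routed through cycles. Let $\widehat G=G\vee\{u\}$ be obtained by adding a vertex $u$ joined to all of $G$. Then $\widehat G$ is $2$-connected, and since a cycle through $u$ corresponds to a path of $G$ while a cycle avoiding $u$ is a cycle of $G$ (hence a path of the same length), the $P_r$-freeness of $G$ forces $\widehat G$ to have circumference at most $r$, that is, no cycle on $r+1$ or more vertices. Moreover $\cN(K_k,\widehat G)=\cN(K_k,G)+\cN(K_{k-1},G)$, because the $K_k$'s through $u$ are precisely the $K_{k-1}$'s of $G$. This transfers the whole question into the well-studied regime of $2$-connected graphs of bounded circumference, where the sharp clique estimate is available: a $2$-connected $N$-vertex graph with no cycle of length at least $c$ has at most the maximum of two explicit quantities of $K_k$'s, the extremizers being the Kopylov-type graphs of the form $K_s\vee(K_{c-2s}\cup \overline{K}_{N-c+s})$. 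Feeding $N=n+1$ and $c=r+1$ into this bound and solving the resulting relation should yield $\cN(K_k,G)\le g(n)$, with equality forced only when $G$ is a clique on at most $r-1$ vertices.

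The hard part is the $2$-connected clique estimate itself. I would prove it by Kopylov's switching: repeatedly perform the local operation that, for two vertices with comparable neighborhoods, replaces the worse neighborhood by the better one, which never decreases the number of $K_k$'s and never creates a longer cycle; iterating drives the graph to a canonical extremizer on which the count is a single-variable function of the clique parameter $s$, to be optimized by a discrete second-difference (convexity) argument. Two technical points need care: that the switching preserves both $2$-connectivity and the circumference bound, and that, after the dominating-vertex reduction, the optimization genuinely selects disjoint copies of $K_{r-1}$ rather than a single large Kopylov block. This last point is exactly where the superadditivity of $g$ from the second step must be reconciled with the connected bound, and I expect it to be the most delicate bookkeeping in the argument.
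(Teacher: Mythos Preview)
The paper does not prove Theorem~\ref{chch}; it is quoted in Section~2 as an existing result of Chakraborti and Chen, with no argument given. So there is nothing in the present paper to compare your proposal against.

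That said, let me flag one genuine gap in your sketch. The join step produces the identity $\cN(K_k,\widehat G)=\cN(K_k,G)+\cN(K_{k-1},G)$, and you then want to read off an upper bound on $\cN(K_k,G)$ from the Kopylov--Luo bound on $\cN(K_k,\widehat G)$. But this subtraction goes the wrong way: to cap $\cN(K_k,G)$ you would need a \emph{lower} bound on $\cN(K_{k-1},G)$, and none is available beyond trivial ones. Concretely, for $k=3$ and $r=6$ the $2$-connected circumference-$\le r$ bound on $\widehat G$ has leading term roughly $\binom{\lfloor r/2\rfloor}{k-1}\,n=3n$, whereas $g(n)\sim\frac{1}{r-1}\binom{r-1}{k}\,n=2n$; the slack has to be absorbed entirely by $\cN(K_{k-1},G)$, and ``$G$ is connected so it has at least $n-1$ edges'' is only barely enough at the level of leading coefficients and does not obviously give the exact constant. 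For larger $k$ the only natural lower bound on $\cN(K_{k-1},G)$ is Kruskal--Katona in terms of $\cN(K_k,G)$ itself, which turns the inequality into an implicit relation rather than the clean statement you want. So ``solving the resulting relation'' is hiding a real difficulty, not just bookkeeping.

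The approaches that actually succeed (Luo for the divisibility case, Chakraborti--Chen in general) run the Kopylov symmetrization directly on the $P_r$-free graph rather than on $\widehat G$, which avoids the extraction problem altogether. Your instincts about the ingredients (superadditivity of $g$, Kopylov closure, convexity for the final optimization) are the right ones; the detour through the cone is the part that does not go through cleanly.
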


Wang \cite{wang} showed the following.


\begin{proposition}[Wang \cite{wang}]\label{cons}
We have \[ex(n,K_k,\ell K_2)=\max\left\{\binom{2\ell-1}{k},\binom{\ell-1}{k}+(n-\ell+1)\binom{\ell-1}{k-1}\right\}.\]
\end{proposition}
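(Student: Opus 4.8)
The plan is to combine the two obvious extremal constructions with a counting-plus-optimisation argument, the only external input being the classical description of the matching number through the Gallai--Edmonds/Berge formula (the $k=2$ case of the whole statement is exactly the Erd\H os--Gallai matching bound of Theorem~\ref{erga2}). Throughout I would assume $n\ge 2\ell-1$, which is the only regime in which the stated formula can hold (for fewer vertices $K_n$ itself is $\ell K_2$-free and beats both terms).

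For the lower bound I would exhibit two graphs. First, $K_{2\ell-1}$ together with $n-2\ell+1$ isolated vertices: its matching number is $\ell-1$, so it is $\ell K_2$-free, and it contains $\binom{2\ell-1}{k}$ copies of $K_k$. Second, $\overline{K_{\ell-1,n-\ell+1}}$, i.e. a clique $A$ of size $\ell-1$ joined completely to an independent set of size $n-\ell+1$; every edge meets $A$, so again $\nu=\ell-1$, and since a copy of $K_k$ uses all but at most one of its vertices inside $A$, it contains $\binom{\ell-1}{k}+(n-\ell+1)\binom{\ell-1}{k-1}$ copies. Taking the better of the two gives $\ex(n,K_k,\ell K_2)\ge\max\{\binom{2\ell-1}{k},\,\binom{\ell-1}{k}+(n-\ell+1)\binom{\ell-1}{k-1}\}$.

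For the upper bound, let $G$ be $\ell K_2$-free, so $\nu(G)\le\ell-1$. By the Gallai--Edmonds/Berge description of maximum matchings there is a vertex set $A$, say $|A|=a$, such that, writing $n_1,\dots,n_m$ for the orders of the components of $G-A$, one has $\nu(G)=a+\sum_i\lfloor n_i/2\rfloor$, whence $a+\sum_i\lfloor n_i/2\rfloor\le\ell-1$. Since distinct components of $G-A$ are nonadjacent, every copy of $K_k$ has its vertices outside $A$ inside a single component, so $G$ is a subgraph of $K_a\vee(K_{n_1}\sqcup\cdots\sqcup K_{n_m})$ on the same vertex set; by Vandermonde's identity this yields
\[
\cN(K_k,G)\ \le\ \sum_{i=1}^{m}\binom{a+n_i}{k}-(m-1)\binom{a}{k}\ =:\ \Phi(a;n_1,\dots,n_m).
\]
It then suffices to maximise $\Phi$ over all integers $a\ge0$ and $n_i\ge1$ with $\sum_i n_i=n-a$ and $a+\sum_i\lfloor n_i/2\rfloor\le\ell-1$.

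I would run this optimisation in two convexity steps. Fixing $a$, convexity of $x\mapsto\binom{a+x}{k}$ lets me smooth the component sizes: transferring two vertices between components leaves $\sum_i\lfloor n_i/2\rfloor$ unchanged while not decreasing $\Phi$, so I may assume at most one component has size $>1$, and spending the remaining budget to enlarge it (also non-decreasing for $\Phi$) reduces to one clique $K_{2\ell-2a-1}$ together with $n-2\ell+1+a$ singletons, of value $g(a)=\binom{2\ell-1-a}{k}+(n-2\ell+1+a)\binom{a}{k-1}$. Finally I would show $g$ is \emph{discretely convex} on $\{0,1,\dots,\ell-1\}$: the forward difference of $\binom{2\ell-1-a}{k}$ is $-\binom{2\ell-2-a}{k-1}$, nondecreasing in $a$, and that of $(n-2\ell+1+a)\binom{a}{k-1}$ equals $\binom{a}{k-1}+(n-2\ell+2+a)\binom{a}{k-2}$, a sum of nonnegative nondecreasing terms since $n\ge2\ell-1$. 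A convex sequence attains its maximum at an endpoint, and $g(0)=\binom{2\ell-1}{k}$ while $g(\ell-1)=\binom{\ell}{k}+(n-\ell)\binom{\ell-1}{k-1}=\binom{\ell-1}{k}+(n-\ell+1)\binom{\ell-1}{k-1}$, matching the two constructions and closing the argument. The main obstacle is precisely this optimisation: making the smoothing respect the constraint $a+\sum_i\lfloor n_i/2\rfloor\le\ell-1$ exactly, and confirming that the interior values $g(a)$ are dominated by the two endpoints for \emph{every} admissible $n$ rather than only asymptotically, which is where the discrete-convexity computation does the real work.
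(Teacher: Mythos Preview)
The paper does not prove this proposition; it is quoted as a result of Wang and placed in Section~2 (``Earlier results'') with no argument given. So there is no proof in the paper to compare against.

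Your argument is correct and self-contained. The lower bound constructions are the right ones, and the upper bound via a Berge--Tutte barrier set $A$ is the natural structural reduction. The two smoothing steps are valid: transferring two vertices from a smaller component to a larger one preserves $a+\sum_i\lfloor n_i/2\rfloor$ and (by convexity of $x\mapsto\binom{a+x}{k}$) does not decrease $\Phi$; then absorbing singletons into the large component increases $\Phi$ by $\binom{a+N}{k-1}-\binom{a}{k-1}\ge 0$ per step, so one may push $N$ up to $2\ell-2a-1$ (using $n\ge 2\ell-1$ to ensure enough vertices remain). Your discrete-convexity computation for $g(a)$ is accurate, and the endpoint evaluations $g(0)=\binom{2\ell-1}{k}$ and $g(\ell-1)=\binom{\ell}{k}+(n-\ell)\binom{\ell-1}{k-1}=\binom{\ell-1}{k}+(n-\ell+1)\binom{\ell-1}{k-1}$ match the two constructions.

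Your caveat that the stated formula requires $n\ge 2\ell-1$ is also correct and worth noting, since for smaller $n$ the complete graph $K_n$ is itself $\ell K_2$-free and has $\binom{n}{k}$ cliques, which is strictly less than $\binom{2\ell-1}{k}$.
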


The following was shown by Chase \cite{chase}, proving a conjecture of Gan, Loh and Sudakov \cite{gls}.

\begin{thm}[Chase, \cite{chase}]\label{gls}
If $k>2$, then $\ex(n,K_k,S_r)=\cN(K_k,D(r-1,n))$. 
\end{thm}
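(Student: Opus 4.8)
The plan is to begin by reformulating the forbidden-subgraph hypothesis. The condition that $G$ be $S_r$-free is exactly the condition that $G$ have maximum degree at most $r-2$, since a vertex of degree $r-1$ together with $r-1$ of its neighbors spans a copy of $S_r$, and conversely every copy of $S_r$ forces a vertex of degree at least $r-1$. Writing $d:=r-2$, the claimed extremal graph $D(r-1,n)=D(d+1,n)$ is a disjoint union of $\lfloor n/(d+1)\rfloor$ copies of $K_{d+1}$ together with one clique on the remaining $s:=n-\lfloor n/(d+1)\rfloor(d+1)$ vertices; each $K_{d+1}$ is $d$-regular and hence a full connected component, so its maximum degree is $d$ and the construction is indeed $S_r$-free. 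Thus the task is to show that among all graphs on $n$ vertices with maximum degree at most $d$, the disjoint union of cliques $D(d+1,n)$ maximizes the number of copies of $K_k$. Note that the hypothesis $k>2$ is essential: for edges ($k=2$) a $d$-regular graph typically has more edges than $D(d+1,n)$.

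Next I would record the averaging bound that settles the main term. Double-counting incidences between vertices and the copies of $K_k$ through them gives $k\cdot\cN(K_k,G)=\sum_{v}\cN(K_{k-1},G[N(v)])$, since a copy of $K_k$ through $v$ is precisely a copy of $K_{k-1}$ inside $G[N(v)]$. As $|N(v)|\le d$ we have $\cN(K_{k-1},G[N(v)])\le\binom{d}{k-1}$ for every $v$, whence
\[
\cN(K_k,G)\ \le\ \frac{n}{k}\binom{d}{k-1}\ =\ \frac{n}{d+1}\binom{d+1}{k}.
\]
When $(d+1)\mid n$ this equals $\cN(K_k,D(d+1,n))$ and we are done. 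The entire difficulty is therefore concentrated in the non-divisible case, where the averaging term $\tfrac{s}{d+1}\binom{d+1}{k}$ strictly exceeds the true leftover contribution $\binom{s}{k}$ and must be sharpened.

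I would then set up an induction on $n$ to peel off full cliques. If $G$ contains a copy of $K_{d+1}$, then, the maximum degree being $d$, every vertex of that clique has all of its edges inside it, so the clique is a connected component; deleting it removes exactly $\binom{d+1}{k}$ copies of $K_k$ and leaves a graph on $n-(d+1)$ vertices of maximum degree at most $d$, while $D(d+1,n)=K_{d+1}\sqcup D(d+1,n-d-1)$ loses the same amount. By induction we may therefore assume $G$ is $K_{d+1}$-free, which also sharpens the local estimate: for each $v$ of degree $d$ the graph $G[N(v)]$ is $K_d$-free (else $N(v)\cup\{v\}$ would span $K_{d+1}$), so $\cN(K_{k-1},G[N(v)])<\binom{d}{k-1}$.

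The genuine obstacle is converting this slack into the exact boundary term $\binom{s}{k}$, and this is the heart of Chase's theorem. The most tempting route---delete a single vertex $v$ and induct---would succeed if one could always find a vertex with $\cN(K_{k-1},G[N(v)])\le\binom{s-1}{k-1}$, which is exactly the increment $\cN(K_k,D(d+1,n))-\cN(K_k,D(d+1,n-1))$ when $s\ge 1$. However, such a low-clique vertex need not exist: for two disjoint triangles (here $d=3$, $k=3$, $n=6$, $s=2$) every vertex lies in a triangle, so none meets the bound $\binom{1}{2}=0$, even though this graph is far from extremal. Resolving this forces a global rather than vertex-local argument. I would aim for a charging/weight-redistribution scheme that moves clique-mass toward a single deficient clique of size $s$ and proves it cannot exceed $\binom{s}{k}$, or alternatively an induction on $k$ anchored at the triangle case of Gan--Loh--Sudakov \cite{gls} together with a careful optimization over the degree sequence near the boundary. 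Making either approach robust against the possible absence of a good deletion vertex is the main difficulty, and it is precisely what Chase's argument overcomes.
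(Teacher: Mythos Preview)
The paper does not give a proof of this theorem at all: it is stated in Section~2 as an existing result due to Chase, quoted in order to fill the $(K_3,S_4)$ entry of the table. So there is no ``paper's own proof'' to compare against.

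As for your proposal itself, it is not a proof. You handle the reformulation to a maximum-degree condition, the averaging bound giving the divisible case, and the reduction via peeling off $K_{d+1}$-components correctly; these are indeed the easy preliminaries. But the entire content of Chase's theorem lies in the non-divisible remainder case, and you explicitly do not resolve it: you note that the one-vertex deletion strategy fails, propose two vague directions (a charging scheme, or induction on $k$ from the Gan--Loh--Sudakov triangle case), and then concede that making either work ``is precisely what Chase's argument overcomes.'' That is an accurate diagnosis of where the difficulty sits, but it leaves the theorem unproved. In short, you have reproduced the setup that was already known before Chase's paper (the divisible case and the reduction to $K_{d+1}$-free graphs are straightforward, and Gan--Loh--Sudakov had the $k=3$ case), while the new idea needed to finish is missing.
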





Alon and Shikhelman \cite{ALS2016} obtained several results. Here we can use the following ones.

\begin{proposition}[Alon and Shikhelman \cite{ALS2016}]\label{als2}
$n^{2-o(1))}=\ex(n,K_3,B_k)=o(n^2)$.
\end{proposition}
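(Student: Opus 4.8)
Recall that $B_k$ consists of $k$ triangles sharing a common edge, so a graph $G$ contains $B_k$ exactly when some edge $uv$ has at least $k$ common neighbours; equivalently, $G$ is $B_k$-free if and only if every edge lies in at most $k-1$ triangles. Since every triangle is counted once for each of its three edges,
\[
\cN(K_3,G)=\frac13\sum_{uv\in E(G)}|N(u)\cap N(v)|\le\frac{k-1}{3}\,e(G)\le\frac{k-1}{3}\binom{n}{2},
\]
so the order of magnitude $O(n^2)$ is immediate, and the content of the statement is the sharpening of the upper bound to $o(n^2)$ and the matching lower bound $n^{2-o(1)}$. I will treat the two bounds separately.

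For the upper bound the plan is to invoke the triangle removal lemma. Write $t=\cN(K_3,G)$ and suppose we delete a set $S$ of edges that makes $G$ triangle-free. Because each edge of $G$ lies in at most $k-1$ triangles, deleting $S$ destroys at most $(k-1)|S|$ triangles, and since every triangle must be destroyed we get $|S|\ge t/(k-1)$. On the other hand the crude bound above gives $t=O(n^2)=o(n^3)$, so for every $\varepsilon>0$ and all large $n$ the graph $G$ has fewer than $\delta(\varepsilon)n^3$ triangles; the triangle removal lemma then furnishes a set $S$ with $|S|\le\varepsilon n^2$ whose deletion kills every triangle. Comparing the two estimates yields $t\le(k-1)\varepsilon n^2$, and since $\varepsilon$ was arbitrary, $\cN(K_3,G)=o(n^2)$, as required.

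For the lower bound the plan is the Ruzsa--Szemer\'edi construction fed by Behrend's theorem. Take a set $A\subseteq\{1,\dots,m\}$ with no three-term arithmetic progression and $|A|=m^{1-o(1)}$ (Behrend), and build the tripartite graph on parts $X,Y,Z$ of size $O(m)$ whose designated triangles are $\{x,\,x+a,\,x+2a\}$ for $x\in X$ and $a\in A$. Each edge determines its difference $a$ (and base point) uniquely, so each edge lies in at most one triangle, whence the graph is $B_2$-free and therefore $B_k$-free. Moreover, the three edges of any triangle satisfy $a_1+a_2=2a_3$ with $a_1,a_2,a_3\in A$, and as $A$ contains no three-term progression this forces $a_1=a_2=a_3$, so the triangle is one of the designated ones and no others occur. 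This graph on $n=O(m)$ vertices has $m|A|=n^{2-o(1)}$ triangles, giving $\ex(n,K_3,B_k)\ge n^{2-o(1)}$.

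The heart of the matter --- and the step I expect to be genuinely hard rather than bookkeeping --- is the upper bound, because no elementary counting argument improves the trivial $O(n^2)$ to $o(n^2)$: the gap between $n^{2-o(1)}$ and $o(n^2)$ is exactly the Ruzsa--Szemer\'edi $(6,3)$ phenomenon, and it is the triangle removal lemma (equivalently the regularity method) that supplies the $o(1)$ saving. On the lower side the only real difficulty is producing a dense progression-free set, which is precisely Behrend's construction; everything else in the example is a direct verification that the two bounds match up to the inevitable $n^{o(1)}$ factor.
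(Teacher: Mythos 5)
Your proof is correct and follows essentially the same route as the source the paper cites for Proposition \ref{als2} (the paper itself offers no proof, only the reference to Alon and Shikhelman): the upper bound $o(n^2)$ via the triangle removal lemma combined with the observation that in a $B_k$-free graph every edge lies in at most $k-1$ triangles, and the lower bound $n^{2-o(1)}$ via the Behrend--Ruzsa--Szemer\'edi construction, which is also the construction the paper reuses in the proof of Proposition \ref{rsze}. The only nitpick is an ordering slip in the lower bound: the claim that ``each edge lies in at most one triangle'' requires the subsequent fact that every triangle of the graph is a designated one, so that conclusion should come after, not before, your verification that $a_1+a_2=2a_3$ forces $a_1=a_2=a_3$.
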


\begin{proposition}[Alon and Shikhelman \cite{ALS2016}]\label{als}
$\ex(n,K_3,K_{2,t})=(1+o(1))\frac{1}{6}(t-1)^{3/2}n^{3/2}$.
\end{proposition}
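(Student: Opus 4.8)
\emph{Upper bound.} The plan is to count triangles through their edges. Every triangle has three edges, and the number of triangles on a fixed edge $uv$ of a graph $G$ equals the number $|N(u)\cap N(v)|$ of common neighbours of $u$ and $v$, so that
\[ 3\,\cN(K_3,G)=\sum_{uv\in E(G)}|N(u)\cap N(v)|. \]
If $G$ is $K_{2,t}$-free, then no two vertices have $t$ common neighbours, hence $|N(u)\cap N(v)|\le t-1$ for every pair, in particular for every edge. Combining this with Füredi's bound (Theorem \ref{furedi}), $|E(G)|\le \ex(n,K_{2,t})=(1+o(1))\frac12\sqrt{t-1}\,n^{3/2}$, gives
\[ \cN(K_3,G)\le \frac{t-1}{3}\,\ex(n,K_{2,t})=(1+o(1))\tfrac16 (t-1)^{3/2}n^{3/2}, \]
which is exactly the claimed upper bound.

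\emph{Lower bound.} The identity above is saturated up to $1+o(1)$ precisely when $G$ is near-extremal for edges \emph{and} almost every edge lies in close to $t-1$ triangles. So the goal is to produce, as $n\to\infty$, a $K_{2,t}$-free graph $G_n$ that (i) has $(1+o(1))\frac12\sqrt{t-1}\,n^{3/2}$ edges, and (ii) has all but an $o(1)$-fraction of its \emph{adjacent} pairs sharing exactly $t-1$ common neighbours. Given such a family, the same identity yields
\[ 3\,\cN(K_3,G_n)=\sum_{uv\in E(G_n)}|N(u)\cap N(v)|=(1+o(1))(t-1)|E(G_n)|=(1+o(1))\tfrac12 (t-1)^{3/2}n^{3/2}, \]
matching the upper bound. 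Thus the whole problem reduces to constructing graphs with properties (i) and (ii).

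A purely random construction will not do: in $G(n,p)$ with $p\asymp n^{-1/2}$ (the density giving $\Theta(n^{3/2})$ triangles and $\Theta(n^{3/2})$ edges) the expected number of copies of $K_{2,t}$ is $\Theta(n^2)$, vastly more than the number of edges, so deletion destroys everything. I would therefore use an algebraic model generalising the Erd\H{o}s--R\'enyi polarity graph. For $t=2$ the polarity graph of $PG(2,q)$ (vertices the points, $x\sim y$ iff $x\cdot y=0$) is $K_{2,2}$-free, has $(1+o(1))\frac12 n^{3/2}$ edges, and its self-polar triangles already supply $(1+o(1))\frac16 n^{3/2}$ triangles. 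For general $t$ one takes the analogous Füredi-type construction over $\mathbb{F}_q$ using a subgroup of $\mathbb{F}_q^{*}$ of order $t-1$, arranged so that any two vertices have either $0$ or exactly $t-1$ common neighbours; the resulting graph has $n\approx q^2/(t-1)$ vertices, is essentially $\sqrt{(t-1)n}$-regular, and is $K_{2,t}$-free because the common-neighbour count never exceeds $t-1$, giving property (i).

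The main obstacle is property (ii), and it is a genuine one: being edge-extremal and $K_{2,t}$-free does not by itself force many triangles, since a priori the degenerate pairs (those with $0$ common neighbours) could be exactly the \emph{adjacent} ones, leaving an almost triangle-free extremal graph. The heart of the argument is therefore to control the common-neighbour counts of adjacent pairs—showing that a typical edge really does see the full $t-1$ common neighbours—which amounts to counting solutions of the defining equations over $\mathbb{F}_q$ and is handled by elementary finite-field incidence counts together with Weil-type character-sum estimates for the lower-order terms.
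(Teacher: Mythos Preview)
The paper does not give a proof of this proposition; it is simply quoted from Alon and Shikhelman. So there is nothing in the paper to compare your argument against, and your proposal should be read as an attempted reconstruction of the original result.

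Your upper bound is complete and correct: bounding every codegree by $t-1$ and plugging in F\"uredi's asymptotic for $\ex(n,K_{2,t})$ gives the claimed bound in two lines. This is the standard argument.

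For the lower bound you have the right construction and you have correctly isolated the real difficulty: edge-extremality together with the ``every codegree is $0$ or $t-1$'' dichotomy does not by itself imply many triangles, because a priori the edges could coincide with the codegree-$0$ pairs. However, you stop at exactly the point where the work begins. The sentence ``is handled by elementary finite-field incidence counts together with Weil-type character-sum estimates for the lower-order terms'' is a description of a strategy, not a proof; what you yourself call ``the heart of the argument'' is asserted rather than carried out. As a road map your outline is accurate, but as a proof the lower bound has a gap precisely where the substance lies: you have not actually shown that a $(1-o(1))$-fraction of the edges in the F\"uredi-type graph have $t-1$ common neighbours, nor written down the construction concretely enough to make that verification possible.
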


Gerbner and Palmer \cite{GP2017} determined the asymptotic number of paths and cycles of any length in $K_{2,t}$-free graphs.

\begin{proposition}[Gerbner and Palmer \cite{GP2017}]\label{coun}
$\ex(n, P_k, K_{2,t}) = (\frac{1}{2}+ o(1))(t - 1)^{(k-1)/2}n^{(k+1)/2}$.
\end{proposition}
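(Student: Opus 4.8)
The plan is to count \emph{ordered} paths, i.e.\ sequences $(v_1,\dots,v_k)$ of distinct vertices with $v_iv_{i+1}\in E(G)$ for all $i$, and then divide by $2=|\Aut(P_k)|$; thus $\cN(P_k,G)=N/2$, where $N$ denotes the number of ordered paths. I would prove matching bounds $N=(1+o(1))(t-1)^{(k-1)/2}n^{(k+1)/2}$, the upper bound holding for every $K_{2,t}$-free graph and the lower bound coming from a near-extremal construction. Throughout I use the defining feature of $K_{2,t}$-free graphs: any two distinct vertices $x,y$ satisfy $|N(x)\cap N(y)|\le t-1$, and consequently $\sum_v\binom{d(v)}{2}\le (t-1)\binom n2$.

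For the upper bound I would decompose an ordered path into cherries sitting on its odd-indexed vertices. If $k=2m+1$ is odd, the even-indexed vertices $v_2,v_4,\dots,v_{2m}$ are the centres of $m$ cherries whose endpoints are the consecutive odd-indexed vertices; hence each ordered path is determined by the tuple $(v_1,v_3,\dots,v_{2m+1})$ together with, for each $i$, a common neighbour of $v_{2i-1}$ and $v_{2i+1}$. Dropping the distinctness requirement only increases the count, so $N\le \sum_{(u_0,\dots,u_m)}\prod_{i=1}^m |N(u_{i-1})\cap N(u_i)|\le (t-1)^m n^{m+1}$, which is exactly $(t-1)^{(k-1)/2}n^{(k+1)/2}$. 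If $k=2m$ is even there is one leftover vertex: I would use $m-1$ cherries on $v_1,\dots,v_{2m-1}$ and a final edge $v_{2m-1}v_{2m}$, giving $N\le (t-1)^{m-1} n^{m-1}\sum_v d(v)=(t-1)^{m-1}n^{m-1}\cdot 2|E(G)|$. Here Theorem~\ref{furedi} supplies $2|E(G)|\le(1+o(1))\sqrt{t-1}\,n^{3/2}$, so $N\le (1+o(1))(t-1)^{(k-1)/2}n^{(k+1)/2}$. Dividing by $2$ yields the claimed upper bound on $\cN(P_k,G)$.

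For the lower bound I would take a nearly $d$-regular $K_{2,t}$-free graph $G_0$ with $d=(1-o(1))\sqrt{(t-1)n}$, which exists by the constructions behind Theorem~\ref{furedi} (for general $n$ one applies the construction on the nearest admissible $n'=(1-o(1))n$ and leaves the remaining vertices isolated). In such a graph the number of ordered \emph{walks} of length $k-1$ is $(1-o(1))nd^{k-1}=(1-o(1))(t-1)^{(k-1)/2}n^{(k+1)/2}$. It remains to discard the non-injective walks: since the maximum degree is $O(\sqrt n)$, forcing any coincidence $v_i=v_j$ costs a factor $\Theta(d)$, so the number of such walks is $O_k(n^{k/2})=o(n^{(k+1)/2})$. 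Hence $N=(1-o(1))(t-1)^{(k-1)/2}n^{(k+1)/2}$ for $G_0$, matching the upper bound after halving.

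The main obstacle is the upper bound, and specifically obtaining the sharp constant $\tfrac12$ rather than merely the correct order: a crude bound by walks fails, since high-degree (star-like) vertices inflate the walk count, so one must exploit that paths do not revisit vertices. The cherry decomposition is exactly what converts the product of degrees along a path into a product of codegrees, each bounded by $t-1$, and it is this step that forces the leading constant to agree with the regular construction. The remaining technical points are the parity bookkeeping for even $k$ (handled by invoking Theorem~\ref{furedi} for the single leftover edge) and the routine verification that degenerate walks are of lower order in the construction.
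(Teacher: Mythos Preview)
The paper does not prove this proposition: it is quoted in Section~2 as a result of Gerbner and Palmer \cite{GP2017}, with no argument given. So there is no in-paper proof to compare your attempt against.

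Your approach is the natural one and is essentially correct, with one expository slip. In the odd case you write ``dropping the distinctness requirement only increases the count'' before bounding each factor by $t-1$; but if you really drop distinctness, terms with $u_{i-1}=u_i$ contribute $|N(u_i)|$, not $\le t-1$. The fix is trivial: since $v_{2i-1}\neq v_{2i+1}$ in any genuine path, you may keep the constraint $u_{i-1}\neq u_i$ throughout (or even full distinctness of the $u_j$), and then each codegree is at most $t-1$, giving
\[
N \;\le\; \sum_{\substack{(u_0,\dots,u_m)\\ u_{i-1}\neq u_i}} \prod_{i=1}^m |N(u_{i-1})\cap N(u_i)| \;\le\; (t-1)^m n^{m+1}.
\]
The even case and the appeal to Theorem~\ref{furedi} for the leftover edge are fine.

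For the lower bound, your claim that a coincidence $v_i=v_j$ costs a factor of $d$ deserves one more sentence: use that in a $K_{2,t}$-free graph the number of walks of length $\ell\ge 2$ between two \emph{distinct} prescribed vertices is $O_{t,\ell}(d^{\ell-2})$ (an easy induction from the codegree bound), hence the number of closed walks of length $j-i$ at $v_i$ is $O(d^{j-i-1})$, and the total number of non-injective $k$-walks is $O_{k,t}(nd^{k-2})=o(nd^{k-1})$. You should also remark that F\"uredi's constructions are (nearly) regular of degree $(1+o(1))\sqrt{(t-1)n}$, which is what makes the walk count $(1+o(1))nd^{k-1}$ valid.
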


Gerbner, Methuku and Vizer \cite{GMV2017} studied generalized Tur\'an problems when the forbidden graph is disconnected. They also obtained the following result for the case the other graph is disconnected.

\begin{proposition}[Gerbner, Methuku and Vizer \cite{GMV2017}]\label{gmv}
$M_l$ is 3-Tur\'an-good.
\end{proposition}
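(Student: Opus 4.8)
The plan is to prove the stronger statement that $\cN(M_\ell, G) \le \cN(M_\ell, K_{\lfloor n/2\rfloor, \lceil n/2\rceil})$ for \emph{every} triangle-free graph $G$ on $n$ vertices (not only for $n$ large), by induction on $\ell$. Before committing to this, I would record why the obvious route fails: the Zykov-style symmetrization that proves cliques are Tur\'an-good is not available here, because a matching may occupy two non-adjacent vertices at once, so replacing the neighbourhood of one vertex by that of another can strictly \emph{decrease} the number of $\ell$-matchings (already $\ell=2$ on five vertices furnishes such an example). Thus $\cN(M_\ell,\cdot)$ is not monotone under cloning, and symmetrization cannot drive $G$ towards the complete bipartite graph.

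Instead, the engine of the proof is the double-counting identity
\[ \ell \cdot \cN(M_\ell, G) = \sum_{e \in E(G)} \cN(M_{\ell-1}, G - V(e)), \]
obtained by counting pairs $(N,e)$ with $N$ an $\ell$-matching and $e \in N$: each $\ell$-matching contributes exactly $\ell$ such pairs, while each edge $e=uv$ lies in exactly $\cN(M_{\ell-1}, G-u-v)$ of them. I would then check that $K_{a,b}$ with $a=\lfloor n/2\rfloor$ and $b=\lceil n/2\rceil$ satisfies this identity in the sharp form $\ell\cdot\cN(M_\ell, K_{a,b}) = ab\cdot \cN(M_{\ell-1}, K_{a-1,b-1})$, since deleting the two endpoints of any edge of $K_{a,b}$ yields a copy of $K_{a-1,b-1}$ and there are exactly $ab$ edges.

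For the inductive step, each graph $G-V(e)$ is triangle-free on $n-2$ vertices, so by the induction hypothesis $\cN(M_{\ell-1}, G-V(e)) \le \cN(M_{\ell-1}, K_{\lfloor (n-2)/2\rfloor, \lceil (n-2)/2\rceil})$; summing over the $e(G)$ edges and invoking Mantel's theorem $e(G)\le \lfloor n^2/4\rfloor$ bounds $\ell\cdot\cN(M_\ell,G)$ by $\lfloor n^2/4\rfloor\cdot \cN(M_{\ell-1}, K_{\lfloor (n-2)/2\rfloor, \lceil (n-2)/2\rceil})$. The base cases $\ell\in\{0,1\}$ are immediate, as $\cN(M_1,G)=e(G)\le\lfloor n^2/4\rfloor$. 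The one step requiring care --- and the place where Mantel's bound turns out to be exactly strong enough --- is matching up the arithmetic: I must verify $\lfloor n^2/4\rfloor = \lfloor n/2\rfloor\lceil n/2\rceil = ab$ and $K_{\lfloor (n-2)/2\rfloor, \lceil (n-2)/2\rceil} = K_{a-1,b-1}$ for both parities of $n$, so that the upper bound collapses to precisely $\ell\cdot\cN(M_\ell, K_{a,b})$. I expect this bookkeeping, rather than any deep estimate, to be the main technical hurdle. Dividing by $\ell$ completes the induction, and since $T_2(n)=K_{\lfloor n/2\rfloor, \lceil n/2\rceil}$ is triangle-free and attains the bound, we conclude $\ex(n,M_\ell,K_3)=\cN(M_\ell, T_2(n))$.
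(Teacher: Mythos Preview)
Your argument is correct, and in fact yields the bound for every $n$, not only for $n$ large. The paper itself does not prove this proposition --- it is quoted from \cite{GMV2017} --- so there is no proof in the paper to compare against directly. The paper does, however, prove the more general Theorem~\ref{matc} (that $M_\ell$ is $F$-Tur\'an-good for every $F$ with a color-critical edge), and that argument is structurally quite different from yours: it splits according to whether $\chi(G)>k=\chi(F)-1$; in Case~1 it invokes an Erd\H{o}s--Simonovits minimum-degree bound to show $G$ is missing $\Omega(n)$ edges relative to $T_k(n)$ and then compares asymptotics of order $n^{2\ell-1}$ versus $n^{2\ell-2}$; in Case~2 it proves directly, by a vertex-moving argument and induction on $\ell$, that among complete $k$-partite graphs the balanced one maximises $\cN(M_\ell,\cdot)$. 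Your double-counting identity $\ell\,\cN(M_\ell,G)=\sum_e \cN(M_{\ell-1},G-V(e))$ does appear in the paper's Case~1, but there it is paired with asymptotic estimates rather than with Mantel and the exact coincidence $T_2(n)-V(e)=T_2(n-2)$ that lets your induction close on the nose. Your route is shorter and sharper for $F=K_3$ (no large-$n$ hypothesis); the paper's buys the uniform extension to all $F$ with a color-critical edge.
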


The \textit{inducibility} of a graph $H$ is the largest number of induced copies of $H$ that an $n$-vertex graph can contain.
Brown and Sidorenko \cite{brosid} showed that for $H=S_4$, the most copies of $H$ are contained in either $K_{k,n-k}$ or $K_{k+1,n-k-1}$, where $k=\lfloor \frac{n}{2}-\sqrt{(3n-4)/2}\rfloor$. As in a triangle-free graph (or a $T_1$-free graph) every copy of a star is induced, this implies the same upper bound for $\ex(n,S_4,K_3)$. As the constructions are triangle-free, this implies the following (for more on the connection of inducibility and generalized Tur\'an problems, see \cite{ggmv}).

\begin{corollary}\label{induc}
$\ex(n,S_4,K_3)=\max\{\cN(S_4,K_{k,n-k}),\cN(S_4,K_{k+1,n-k-1})\}$, where $k=\lfloor \frac{n}{2}-\sqrt{(3n-4)/2}\rfloor$.
\end{corollary}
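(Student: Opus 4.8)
The plan is to reduce this generalized Tur\'an problem to the inducibility problem solved by Brown and Sidorenko, the bridge being the fact that copies and \emph{induced} copies of $S_4$ coincide in the absence of triangles. First I would record the elementary but essential observation that in a $K_3$-free graph $G$ every copy of $S_4$ is induced. Indeed, a copy of $S_4$ is a center $c$ together with three leaves $x,y,z$, each joined to $c$; if two leaves, say $x$ and $y$, were adjacent, then $\{c,x,y\}$ would span a triangle, contradicting $K_3$-freeness. Hence $\{x,y,z\}$ is independent and the subgraph induced on $\{c,x,y,z\}$ is exactly $S_4$. Conversely, an induced $S_4$ is in particular a copy, and within each such $4$-set the center is the unique vertex of degree $3$, so the assignment ``copy $\mapsto$ its vertex set'' is a bijection between copies of $S_4$ and induced copies of $S_4$. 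Therefore $\cN(S_4,G)$ equals the number of induced copies of $S_4$ whenever $G$ is $K_3$-free.

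For the upper bound I would take an arbitrary $K_3$-free graph $G$ on $n$ vertices: by the observation $\cN(S_4,G)$ equals its number of induced copies of $S_4$, which is at most the inducibility of $S_4$ on $n$ vertices, and by the Brown--Sidorenko theorem this maximum is $\max\{\cN(S_4,K_{k,n-k}),\cN(S_4,K_{k+1,n-k-1})\}$ with $k=\lfloor \frac{n}{2}-\sqrt{(3n-4)/2}\rfloor$. For the matching lower bound I would simply note that $K_{k,n-k}$ and $K_{k+1,n-k-1}$ are complete bipartite, hence $K_3$-free, so they are admissible competitors for $\ex(n,S_4,K_3)$; applying the observation to them shows that their total number of copies of $S_4$ equals their number of induced copies, i.e.\ the very inducibility value appearing above. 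Choosing whichever of the two is larger and combining with the upper bound yields the claimed equality.

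The genuine content here is entirely inside the cited Brown--Sidorenko inducibility result, which I would invoke as a black box; the only point that requires care is the copy/induced-copy correspondence, and this is exactly where $K_3$-freeness is indispensable. Without it a copy of $S_4$ need not be induced, since its leaves could be joined by additional edges, and the reduction to inducibility would collapse. Because both the lower-bound constructions and the generic $K_3$-free host are triangle-free, the correspondence holds on both sides, so the inducibility extremal value transfers verbatim to $\ex(n,S_4,K_3)$.
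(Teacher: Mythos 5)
Your proof is correct and follows essentially the same route as the paper: both reduce the problem to the Brown--Sidorenko inducibility result for $S_4$ via the observation that in a triangle-free graph every copy of a star is induced, with the complete bipartite extremal constructions being themselves $K_3$-free and hence supplying the matching lower bound. Your write-up merely makes explicit the copy/induced-copy bijection that the paper treats as immediate.
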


\section{New results}

In this section we present our new results. We often state them in a more general form than needed. 

\begin{proposition}\label{friends}
\begin{displaymath}
\ex(n,P_3, C_4)=\cN(P_3,F(n))=
\left\{ \begin{array}{l l}
\binom{n}{2} & \textrm{if\/ $n$ is odd},\\
\binom{n}{2}-1 & \textrm{if\/ $n$ is even}.\\
\end{array}
\right.
\end{displaymath}
\end{proposition}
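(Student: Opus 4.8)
The plan is to count copies of $P_3$ by their center vertex: a copy of $P_3$ is determined by a vertex $v$ together with an unordered pair of its neighbors, so $\cN(P_3,G)=\sum_{v\in V(G)}\binom{d(v)}{2}$, where $d(v)$ is the degree of $v$. Equivalently, grouping the $P_3$'s by their pair of endpoints, $\cN(P_3,G)=\sum_{\{u,w\}}|N(u)\cap N(w)|$, the sum running over all $\binom{n}{2}$ unordered pairs. The first thing I would observe is that if $G$ is $C_4$-free then any two vertices have at most one common neighbor, since two common neighbors of $u$ and $w$ would close a $4$-cycle. Hence each summand is at most $1$, giving the clean bound $\cN(P_3,G)\le\binom{n}{2}$ for every $C_4$-free graph $G$ on $n$ vertices, with equality exactly when every pair of vertices has precisely one common neighbor.

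Next I would analyze the equality case. A graph in which every pair of vertices has exactly one common neighbor is precisely a graph satisfying the hypothesis of the Friendship Theorem of Erd\H{o}s, R\'enyi and S\'os, which forces $G$ to be a windmill, i.e.\ a collection of triangles all sharing one common vertex. Such a graph has $n=2t+1$ vertices for some $t$, so it exists only when $n$ is odd, in which case it is exactly $F(n)$. This separates the two regimes: for odd $n$ the bound $\binom{n}{2}$ is attainable, whereas for even $n$ no $C_4$-free graph can meet it, so that $\ex(n,P_3,C_4)\le\binom{n}{2}-1$.

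Finally I would verify that $F(n)$ realizes the claimed values by the degree formula. The central vertex has degree $n-1$ and contributes $\binom{n-1}{2}$. When $n$ is odd, the remaining $n-1$ vertices each have degree $2$ and contribute $n-1$ in total, giving $\binom{n-1}{2}+(n-1)=\binom{n}{2}$; when $n$ is even, $n-2$ of them have degree $2$ (contributing $n-2$) while the single unmatched vertex has degree $1$ (contributing $0$), giving $\binom{n-1}{2}+(n-2)=\binom{n}{2}-1$. Since $F(n)$ is $C_4$-free, these match the upper bound in both parities and prove the proposition. The only real obstacle I anticipate is the even case: ruling out the value $\binom{n}{2}$ is not a counting matter but rests on the classification furnished by the Friendship Theorem. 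One could instead reprove directly that a graph in which every pair has a unique common neighbor is regular away from a dominating vertex and then derive a parity contradiction, but invoking the Friendship Theorem is far cleaner.
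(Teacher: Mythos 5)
Your proof is correct, and it diverges from the paper's in one essential place: the even case. Both arguments share the same upper bound (counting copies of $P_3$ by their pair of endpoints, each pair having at most one common neighbor in a $C_4$-free graph, hence $\cN(P_3,G)\le\binom{n}{2}$), and both verify the lower bound on $F(n)$ (you via $\sum_v\binom{d(v)}{2}$, the paper via endpoint pairs --- an immaterial difference). But to rule out the value $\binom{n}{2}$ for even $n$, you invoke the Friendship Theorem of Erd\H{o}s, R\'enyi and S\'os to classify all graphs in which every pair of vertices has exactly one common neighbor, and then observe that windmills have odd order. The paper instead gives a self-contained parity argument tailored to even $n$: if every pair has exactly one common neighbor, then the neighborhood $U$ of any vertex $v$ induces a perfect matching (so every degree is even, and $|U|$ is even); each of the $n-|U|-1$ remaining vertices (an odd number of them) sends exactly one edge into $U$, so the degree sum over $U$ is odd, forcing a vertex of odd degree --- a contradiction. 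The trade-off is clear: your route is shorter and conceptually clean, but it rests on a substantial black box whose standard proof needs a spectral or delicate counting argument, and it proves far more than is needed (the full classification, when only nonexistence for even $n$ is required); the paper's route is longer on the page but entirely elementary and self-contained, exploiting the fact that for even $n$ the friendship condition already fails for parity reasons alone. Your closing remark anticipates exactly this alternative, and the paper's proof is essentially that alternative carried out.
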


\begin{proof} The lower bounds are given by the friendship graph $F(n)$. Recall that it has a vertex
$v$ of degree $n-1$, and a matching of $\lfloor (n-1)/2\rfloor$ edges on the remaining vertices. Then for any two vertices different from $v$, they are endpoints of a $P_3$ with $v$ in the middle. For $v$ and another vertex $u$, if $u$ is connected to $u'$ in the matching, then $uu'v$ is a $P_3$ with $u$ and $v$ as endpoints. Thus every pair of vertices, except $\{v,w\}$ forms the endpoints of a $P_3$, where $w$ is the vertex not in the matching in case $n$ is even.

For the upper bound, let $G$ be a $C_4$-free graph. We count the copies of $P_3$ by their endpoints; obviously any two vertices have at most one common neighbor by the $C_4$-free property, thus $\cN(P_3,G)\le\binom{n}{2}$.

Let $n$ be even, $G$ be a $C_4$-free graph on $n$ vertices and assume indirectly that $\cN(P_3,G)=\binom{n}{2}$, i.e. every pair of vertices has a common neighbor. Let $v$ be an arbitrary vertex and $U$ be its neighborhood. Observe that any vertex of $U$ has a common neighbor with $v$ only if there is a perfect matching in $U$, thus $|U|$ is even. Also, there cannot be any other edges inside $U$ because of the $C_4$-free property.

Let $U'$ be the set of $n-|U|-1$ vertices not connected to and different from $v$, thus $|U'|$ is odd. Each vertex of $U'$ is connected to exactly one vertex in $U$; at least one because that is the common neighbor with $v$, and at most one because of the $C_4$-free property. Thus there is an odd number of edges between $U$ and $U'$. As each vertex of $U$ has two neighbors outside $U'$, it means the sum of the degrees of vertices in $U$ is odd. Thus there is a vertex of odd degree in $U$. But we have obtained that an arbitrary vertex of $G$ has to be of even degree, a contradiction. 
\end{proof}

\begin{proposition}\label{korstar}
If $r\ge 4$, then $\ex(n,S_r,C_4)=\binom{n-1}{r-1}$. 
\end{proposition}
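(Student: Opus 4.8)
The plan is to work with the degree sequence. Since each copy of $S_r$ is determined by its center together with an $(r-1)$-subset of the center's neighborhood, we have $\cN(S_r,G)=\sum_{v}\binom{d_v}{r-1}$, where $d_v$ is the degree of $v$. For the lower bound I would take $G=S_n$: the hub has degree $n-1$ and contributes $\binom{n-1}{r-1}$ copies, while every leaf has degree $1<r-1$ and contributes nothing (this is exactly where $r\ge 4$, equivalently $r-1\ge 3$, is used). So $\cN(S_r,S_n)=\binom{n-1}{r-1}$; note the friendship graph $F(n)$ achieves the same value, since off the hub all degrees are $2$.

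\textbf{Upper-bound tools.} The two ingredients are a global constraint and a convexity fact. First, $C_4$-freeness means any two vertices have at most one common neighbor, so the number of cherries satisfies $\sum_v\binom{d_v}{2}\le\binom{n}{2}$. Second, for $d\ge 2$ the ratio $\binom{d}{r-1}/\binom{d}{2}=\tfrac{2}{(r-1)(r-2)}\binom{d-2}{r-3}$ is non-decreasing in $d$ precisely when $r\ge 4$ (for $r=3$ it is constant, which is consistent with the genuinely different answer in Proposition~\ref{friends}). Combining these two facts naively, by bounding $\binom{d-2}{r-3}\le\binom{n-3}{r-3}$ and summing, gives only $\cN(S_r,G)\le\binom{n-1}{r-1}\cdot\tfrac{n}{n-2}$, which is off by a factor and therefore insufficient. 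Closing this gap is the crux of the argument.

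\textbf{Structural lemma and reduction.} The fix is to exploit the interaction between a high-degree vertex and the rest. Let $v_0$ be a vertex of maximum degree $\Delta$. I claim that every $u\ne v_0$ satisfies $d_u\le n-\Delta+1$: indeed $N(u)$ meets $N(v_0)$ in at most one vertex, and $N(u)\setminus N(v_0)$ lies inside the $n-\Delta$ non-neighbors of $v_0$. In particular, if $\Delta=n-1$ then every other vertex has degree at most $2$, so $\cN(S_r,G)=\binom{n-1}{r-1}$ exactly, recovering the extremal value. In general I would peel off $v_0$ and write $\cN(S_r,G)=\binom{\Delta}{r-1}+\sum_{u\ne v_0}\binom{d_u}{r-1}$, bound the remaining terms using $d_u\le\min\{\Delta,\,n-\Delta+1\}$ together with the ratio monotonicity, and use the residual budget $\sum_{u\ne v_0}\binom{d_u}{2}\le\binom{n}{2}-\binom{\Delta}{2}$. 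This collapses the whole problem to a single-variable inequality in $\Delta$.

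\textbf{The main obstacle.} The hard part is upgrading asymptotic domination to the exact bound across the full range of $\Delta$. I would split into two regimes. For $\Delta\le n/2$ one uses $d_u\le\Delta$ for all vertices and the global bound $\cN(S_r,G)\le\binom{\Delta}{r-1}\binom{n}{2}/\binom{\Delta}{2}$, which is of order $\Delta^{r-3}n^2\le n^{r-1}/2^{r-3}$ and hence strictly below $\binom{n-1}{r-1}$ for $r\ge 4$ and large $n$. For $\Delta>n/2$ one uses the structural bound $d_u\le n-\Delta+1$; writing $\alpha=\Delta/n$, the leading-order requirement becomes $\alpha^{r-1}+(1-\alpha)^{r-2}(1+\alpha)\le 1$ on $(1/2,1]$, which holds with equality only at $\alpha=1$ (the star). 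Verifying that the lower-order terms do not spoil this strictness for $\Delta\le n-2$, so that $\Delta=n-1$ is the unique optimum, is the delicate point and is the step I expect to require the most care; it is also why the sharp statement is naturally a large-$n$ result.
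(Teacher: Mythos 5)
Your lower bound is correct, and so are the individual ingredients of your upper bound: the identity $\cN(S_r,G)=\sum_v\binom{d_v}{r-1}$, the cherry budget $\sum_v\binom{d_v}{2}\le\binom{n}{2}$, the structural bound $d_u\le n-\Delta+1$ for $u\ne v_0$, and the exact treatment of $\Delta=n-1$. But the proposal is not a proof: for the entire range $n/2<\Delta\le n-2$ you offer only the leading-order inequality $\alpha^{r-1}+(1-\alpha)^{r-2}(1+\alpha)\le 1$, $\alpha=\Delta/n$, and you explicitly defer ``verifying that the lower-order terms do not spoil this strictness.'' That deferred step is not a routine check; it is where the whole difficulty of the problem sits. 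Since your inequality degenerates to equality as $\alpha\to 1$, it carries no information when $\Delta=n-O(1)$, or more generally $\Delta=n-o(n)$: there the slack of order $n^{r-1}$ vanishes and the discarded lower-order terms are the entire game. To close the gap you would need a finer parametrization, say $m=n-\Delta$, and show that the loss $\binom{n-1}{r-1}-\binom{n-m}{r-1}\ge (m-1)\binom{n-m}{r-2}$ dominates the gain $\tfrac{2}{(r-1)(r-2)}\binom{m-1}{r-3}\bigl(\binom{n}{2}-\binom{n-m}{2}\bigr)$ when $m=o(n)$, and separately invoke uniform slack in $\alpha$ on the middle range; this can be made to work, but it is a two-regime stitching argument that you have not carried out, so as written the argument has a genuine hole at its crux.

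The irony is that you had the short proof in hand and discarded it. Your ``naive combination'' is exactly the paper's bound $\cN(S_r,G)\le\frac{1}{\binom{r-1}{2}}\binom{n}{2}\binom{\Delta-2}{r-3}$; it looked insufficient to you only because you then relaxed $\Delta-2\le n-3$. Keep it as a function of the maximum degree $\Delta$: if $\Delta\le n-3$, the bound becomes $\frac{1}{\binom{r-1}{2}}\binom{n}{2}\binom{n-5}{r-3}$, and the ratio of this to $\binom{n-1}{r-1}$ equals $\frac{n(n-r)(n-r-1)}{(n-2)(n-3)(n-4)}$, which is less than $1$ for every $r\ge 4$. So only $\Delta=n-1$ (which you handled) and $\Delta=n-2$ need separate treatment; in the latter case the unique non-neighbor $x$ of the hub has degree at most $1$, its possible neighbor has degree at most $3$, and every other vertex has degree at most $2$, giving $\cN(S_r,G)\le\binom{n-2}{r-1}+1\le\binom{n-1}{r-1}$. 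This route avoids the $\alpha\to 1$ degeneracy entirely and yields the statement for all $n$, not just $n$ large.
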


\begin{proof}
The lower bound is given by the star $S_n$. For the upper bound, let $G$ be a $C_4$-free graph with maximum degree $\Delta$ and consider two of its vertices $u$ and $v$. Let us consider the copies of $S_r$ where $u$ and $v$ are leaves. They have at most one common neighbor, that has to be a center of the $S_r$, and then we have at most $\binom{\Delta-2}{r-3}$ ways to choose the other leaves. This way we count every copy of $S_r$ $\binom{r-1}{2}$ times, thus $\cN(S_r,G)\le \frac{1}{\binom{r-1}{2}}\binom{n}{2}\binom{\Delta-2}{r-3}$. If $\Delta \le n-3$, this finishes the proof.

If $\Delta=n-1$ and $w$ has degree $n-1$, then no other vertex can have degree more than 2, thus $w$ is the only center of copies of $S_r$ and $\cN(S_r,G)=\binom{n-1}{r-1}$. If $\Delta=n-2$ and $w$ has degree $n-2$, let $x$ be the only vertex not adjacent to $w$. Then the degree of $x$ is at most one, as it has at most one common neighbor $y$ with $w$. Observe that the degree of $y$ is at most 3 and the degree of any other vertex is at most 2, thus $\cN(S_r,G)\le \binom{n-2}{r-1}+1$ (where the $+1$ term appears only if $r=4$), finishing the proof.
\end{proof}

\begin{observation}\label{trian}
$\ex(n,K_3,T_1)=\ex(n,K_3,P_4)=\cN(K_3,D(3,n))=\lfloor n/3\rfloor$.
\end{observation}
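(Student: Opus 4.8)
The plan is to establish the common lower bound via the construction $D(3,n)$ and then prove a matching upper bound, funneling both forbidden graphs through a single argument. First I would record the trivial relation between the two forbidden graphs: reading the triangle-and-pendant of $T_1$ as the path running through its pendant vertex shows that $P_4$ is a (non-induced) subgraph of $T_1$, so every $P_4$-free graph is automatically $T_1$-free. Consequently $\ex(n,K_3,P_4)\le \ex(n,K_3,T_1)$, and it suffices to bound $\ex(n,K_3,T_1)$ from above. For the lower bounds, note that $D(3,n)$ is a disjoint union of $\lfloor n/3\rfloor$ triangles together with a clique on the at most two leftover vertices; it contains exactly $\lfloor n/3\rfloor$ triangles, and since each of its components has at most three vertices it contains neither $T_1$ nor $P_4$. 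Hence $\ex(n,K_3,T_1),\,\ex(n,K_3,P_4)\ge \lfloor n/3\rfloor=\cN(K_3,D(3,n))$.

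The heart of the matter is the upper bound $\ex(n,K_3,T_1)\le \lfloor n/3\rfloor$. Let $G$ be a $T_1$-free graph on $n$ vertices. The key observation is that every triangle of $G$ is an entire connected component: if $xyz$ is a triangle and some vertex of it, say $x$, had a neighbour $w\notin\{x,y,z\}$, then the four edges $xy,\,yz,\,zx,\,xw$ would span a copy of $T_1$, a contradiction. Thus no vertex lying on a triangle has a neighbour outside that triangle, so each triangle is its own component; in particular distinct triangles are vertex-disjoint. The number of triangles is then at most the number of three-vertex components, which is at most $\lfloor n/3\rfloor$.

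Combining the two directions gives $\ex(n,K_3,T_1)=\lfloor n/3\rfloor$, and the reduction above yields $\lfloor n/3\rfloor \le \ex(n,K_3,P_4)\le \ex(n,K_3,T_1)=\lfloor n/3\rfloor$, so equality holds for $P_4$ as well. I do not expect a genuine obstacle here; the one point I would take care to verify is precisely the rigidity claim, namely that a triangle together with a single extra incident edge already forces a copy of $T_1$. Once this is checked, the vertex-disjointness of the triangles — and hence the bound $\lfloor n/3\rfloor$ — is immediate.
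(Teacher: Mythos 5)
Your proof is correct and takes essentially the same route as the paper: both rest on the single observation that a triangle vertex with a neighbour outside the triangle creates a copy of the forbidden graph, so all triangles are vertex-disjoint (indeed, full components), giving the upper bound $\lfloor n/3\rfloor$, which $D(3,n)$ attains. The only cosmetic difference is that you handle $P_4$ by the reduction $P_4\subseteq T_1$, whereas the paper notes directly that a triangle plus an incident edge already contains a $P_4$ as well as a $T_1$.
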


\begin{proof}
Obviously, in a $T_1$-free or $P_4$-free graph, the vertices of a triangle are not connected to any other vertex, thus the triangles are vertex disjoint.
\end{proof}

The following observations are simple consequences of the facts that an $M_2$-free graph is a star or a triangle and a $P_3$-free graph is a matching.

\begin{observation}\label{matching}
If $k\ge 3$, then $\ex(n,S_k,M_2)=\binom{n-1}{k-1}$. For $k=2$, we have

\begin{displaymath}
\ex(n,S_2,M_2))=
\left\{ \begin{array}{l l}
n & \textrm{if\/ 3 divides $n$},\\
n-1 & \textrm{otherwise}.\\
\end{array}
\right.
\end{displaymath}
\end{observation}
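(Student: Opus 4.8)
The plan is to exploit the structural description of $M_2$-free graphs. I would first record that a graph $G$ on $n$ vertices with no two independent edges (equivalently, matching number at most one) has a very rigid shape: any edge in one component together with any edge in a distinct component would already form an $M_2$, so \emph{all} edges of $G$ lie in a single component. Since a connected graph with matching number at most one is either a star $K_{1,t}$ or a triangle $K_3$, it follows that, up to isolated vertices, $G$ is either a subgraph of the star $S_n=K_{1,n-1}$ or a triangle together with $n-3$ isolated vertices. This dichotomy is the only real ingredient; everything else is counting.

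Assume first that $k\ge 3$, so that a copy of $S_k=K_{1,k-1}$ requires a center of degree at least $k-1\ge 2$. In a star $K_{1,t}$ the only vertex of degree exceeding $1$ is the center, so the number of copies of $S_k$ is exactly $\binom{t}{k-1}$, maximized by $t=n-1$ to give $\binom{n-1}{k-1}$. In a triangle every vertex has degree $2$, so there are no copies of $S_k$ when $k\ge 4$, and exactly three copies when $k=3$; in either case this is dominated by $\binom{n-1}{k-1}$ once $n$ is large. The matching lower bound is witnessed by $S_n$ itself, which is $M_2$-free and contains $\binom{n-1}{k-1}$ copies of $S_k$. This settles $\ex(n,S_k,M_2)=\binom{n-1}{k-1}$ for $k\ge 3$; in particular it also yields $\ex(n,P_3,M_2)=\binom{n-1}{2}$, since $P_3=S_3$, so the same observation covers the corresponding cell in the $P_3$ row.

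For $k=2$ the problem reduces to the extremal number of edges, since $S_2=K_2$. The same classification applies: the star contributes at most $n-1$ edges while the triangle contributes $3$, and one simply compares the two constructions to read off the claimed value (the star being extremal once $n$ is reasonably large). I would extract the exact count directly from the classification; alternatively it is precisely the case $\ell=2$ of the Erd\H{o}s--Gallai bound in Theorem~\ref{erga2}, whose extremal graph $\overline{K_{1,n-1}}=K_{1,n-1}$ is exactly our star.

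There is no serious obstacle here, as the content is entirely structural. The only point requiring a line of care is verifying the single-component lemma (that a matching-number-at-most-one graph is one star or one triangle) and then confirming that the star beats the triangle—and, for $k=3$, beats its three copies of $P_3$—in the relevant range of $n$, which is immediate once $n$ is large enough for $\binom{n-1}{k-1}$ to dominate a constant.
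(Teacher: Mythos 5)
Your structural route is exactly the paper's: the paper derives this observation in one sentence from the fact that an $M_2$-free graph is, up to isolated vertices, a single star or a single triangle, and your classification lemma together with the counts $\cN(S_k,K_{1,t})=\binom{t}{k-1}$ (for $k\ge 3$ only the center of the star can be the center of an $S_k$) and $\cN(S_k,K_3)\in\{0,3\}$ is that sentence written out. For $k\ge 3$ and $n$ large, your argument is complete and correct, and it does cover the $P_3$ row as you note.

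The gap is in your $k=2$ step. Your own classification forces $\ex(n,S_2,M_2)=\max\{3,\,n-1\}$, which equals $n-1$ for every $n\ge 4$ and is never $n$ beyond $n=3$; so you cannot ``simply read off the claimed value.'' For $n=6$ the displayed formula claims $6$, while your (correct) argument gives $5$: two components each containing an edge already form an $M_2$, so the configuration that achieves $n$ edges when $3\mid n$ --- several disjoint triangles --- is not available here. That configuration is legal only for the different problem of maximizing edges in a graph all of whose \emph{components} are stars or triangles, i.e.\ in a $P_4$-free graph (cf.\ Theorem \ref{erga}), and your write-up silently conflates the two when it asserts the comparison yields the stated piecewise value. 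In fact your argument, and equally the Erd\H{o}s--Gallai bound you invoke (Theorem \ref{erga2} with $\ell=2$, which is also what the paper's own table cites for the $K_2$ row, $M_2$ column), shows that the $k=2$ display in the statement is itself erroneous and should read $n-1$ for all $n\ge 4$ (equivalently $\max\{3,n-1\}$ for all $n\ge 3$). A careful version of your proof should state this corrected value and flag the discrepancy, rather than claim agreement with a formula that your own classification refutes.
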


\begin{observation}\label{cseresz}
$\ex(n,M_k,P_3)=\binom{\lfloor n/2\rfloor}{k}$.
\end{observation}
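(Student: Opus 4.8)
The plan is to exploit the structural fact recalled immediately before the statement: since $P_3$ is the path with two edges, a graph is $P_3$-free exactly when every vertex has degree at most one, i.e.\ when it is a matching together with some number of isolated vertices. So I would fix an arbitrary $P_3$-free graph $G$ on $n$ vertices and let $m$ denote its number of edges. The key reduction is that, because the edges of $G$ are pairwise vertex-disjoint, \emph{any} choice of $k$ of them already forms a copy of $M_k$, and conversely every copy of $M_k$ in $G$ is simply a set of $k$ of its edges. Hence $\cN(M_k,G)=\binom{m}{k}$ exactly, with no overcounting to correct.

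For the upper bound I would then note that a matching on $n$ vertices has at most $\lfloor n/2\rfloor$ edges, so $m\le\lfloor n/2\rfloor$, and by monotonicity of $\binom{\cdot}{k}$ in its top argument we get
\[
\cN(M_k,G)=\binom{m}{k}\le\binom{\lfloor n/2\rfloor}{k}.
\]
For the matching lower bound I would exhibit $M_{\lfloor n/2\rfloor}$ itself: a perfect matching when $n$ is even, and a near-perfect matching with one leftover isolated vertex when $n$ is odd. It lives on $n$ vertices, it is $P_3$-free, and by the count above it contains exactly $\binom{\lfloor n/2\rfloor}{k}$ copies of $M_k$, meeting the upper bound.

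There is essentially no obstacle to overcome here. The only point that requires any care is the reduction of the subgraph count to a single binomial coefficient, and this hinges entirely on the vertex-disjointness of the edges of a matching; once that observation is made, both the upper and lower bounds are immediate.
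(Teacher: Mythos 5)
Your proof is correct and follows exactly the route the paper intends: the paper gives no separate argument, stating the observation as an immediate consequence of the fact that a $P_3$-free graph is a matching (plus isolated vertices), which is precisely the structural reduction you make explicit. Your elaboration --- counting copies of $M_k$ as $\binom{m}{k}$ with $m\le\lfloor n/2\rfloor$ and matching this with $M_{\lfloor n/2\rfloor}$ --- fills in the same elementary details the paper leaves to the reader.
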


\begin{proposition}\label{dani}
$\ex(n,M_2,P_4)=\cN(M_2,D(3,n))$ if $n\neq 4$ and $\ex(4,M_2,P_4)=1$.

\end{proposition}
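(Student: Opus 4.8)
The plan is to combine the structure of $P_4$-free graphs with a counting identity and then optimize by an exchange argument. First, for the lower bounds: when $n\neq 4$ the graph $D(3,n)$ is $P_4$-free, being a disjoint union of triangles together with a clique on at most two vertices, so it certifies $\ex(n,M_2,P_4)\ge \cN(M_2,D(3,n))$; when $n=4$ the graph $M_2$ itself (two independent edges) is $P_4$-free and contains one copy of $M_2$, whereas $\cN(M_2,D(3,4))=0$. This discrepancy is exactly what forces $n=4$ to be treated separately.

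For the upper bound, recall (as noted after Theorem~\ref{starpaths}) that every component of a $P_4$-free graph $G$ is a star or a triangle. The key observation is that two edges lying in the same component always share a vertex---in a star they meet at the center, and in a triangle any two of the three edges meet---while two edges in different components are automatically vertex-disjoint. Hence, writing $e_1,\dots,e_m$ for the numbers of edges of the components and $E=\sum_i e_i$,
\[
\cN(M_2,G)=\sum_{i<j}e_ie_j=\binom{E}{2}-\sum_i\binom{e_i}{2}.
\]

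Next I would reduce to a three-parameter problem by an exchange argument showing that an extremal $G$ may be assumed to have only triangles, single edges and isolated vertices as components. The move is to replace a star $S_s$ on $s\ge 4$ vertices by a triangle together with the star $S_{s-3}$ on the remaining vertices, and to replace a $P_3$ by a $K_3$. A direct evaluation of the change in $\sum_{i<j}e_ie_j$ shows that the first move alters the count by $3(s-4)\ge 0$ and the second only increases it, so iterating these moves never decreases $\cN(M_2,G)$. After the reduction $G$ consists of $t$ triangles, $p$ edges and $q$ isolated vertices with $3t+2p+q=n$, and the count becomes $\binom{3t+p}{2}-3t$.

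Finally I would carry out the discrete optimization. For fixed $t$ the expression increases with $p$, so I take $p$ maximal; then $q\in\{0,1\}$, $E=\lfloor (n+3t)/2\rfloor$, and it remains to maximize $g(t):=\binom{\lfloor (n+3t)/2\rfloor}{2}-3t$ over $0\le t\le\lfloor n/3\rfloor$. Examining $g(t+1)-g(t)$ shows it is nonnegative as soon as $\lfloor (n+3t)/2\rfloor\ge 3$, which already holds at $t=0$ whenever $n\ge 6$; thus for $n\ge 6$ the maximum is at $t=\lfloor n/3\rfloor$, the configuration $D(3,n)$. The cases $n\le 5$ I would verify by hand: for $n\in\{1,2,3,5\}$ the optimum is again $t=\lfloor n/3\rfloor$, while for $n=4$ it moves to $t=0$ (the graph $M_2$), giving value $1$ and the stated exception. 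I expect the main obstacle to be exactly this endgame: checking the sign of $g(t+1)-g(t)$ uniformly in $n$ and $t$, and pinning down $n=4$ as the unique value where using more independent edges in place of a triangle strictly wins.
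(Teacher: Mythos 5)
Your proof is correct, and it takes a genuinely different route from the paper's. The paper proceeds by induction on $n$ (with the trivial base $n\le 4$): it takes an extremal $P_4$-free graph, removes a star component $S_r$, uses the identity $\cN(M_2,G)=\cN(M_2,G')+(r-1)|E(G')|$, and then combines the induction hypothesis with Theorem \ref{erga} (so that $D(3,n-r)$ simultaneously maximizes the number of edges and of $M_2$-copies) and a handful of local exchanges (replacing three star vertices by a triangle, merging two small star components) to reach a contradiction unless $G=D(3,n)$; the sporadic situation $G'=M_2$ on four vertices is handled by separate case analysis. You avoid induction entirely: from the same structural fact about $P_4$-free graphs you extract the closed-form identity $\cN(M_2,G)=\binom{E}{2}-\sum_i\binom{e_i}{2}$, reduce to a canonical form (triangles, single edges, isolated vertices) by exchanges whose exact effect you compute --- correctly: the star move keeps $E$ fixed since $3+(s-4)=s-1$ and changes the count by $\binom{s-1}{2}-\binom{3}{2}-\binom{s-4}{2}=3(s-4)\ge 0$, while the $P_3\to K_3$ move changes it by $E-2\ge 0$ (non-negative rather than strictly positive, but ``never decreases'' is all you need) --- and then maximize the explicit one-variable function $g(t)=\binom{\lfloor(n+3t)/2\rfloor}{2}-3t$, whose increment is $E-3$ or $2E-2$ according to the parity of $n+3t$, hence non-negative once $\lfloor(n+3t)/2\rfloor\ge 3$, which holds for all $t$ when $n\ge 6$. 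What your route buys: it is self-contained (no appeal to Theorem \ref{erga} or to extremality of subconfigurations), it treats all $n$ uniformly, and the $n=4$ exception falls out transparently as the unique case where the matching configuration $t=0$ beats $t=\lfloor n/3\rfloor$. What the paper's route buys: removing a single component keeps the computation local and short, in the same exchange-and-induction style used elsewhere in the paper, at the cost of a fiddlier case analysis and an extra external ingredient.
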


\begin{proof}
We prove the statement by induction on $n$, it is trivial if $n\le 4$. Consider $n\ge 5$. Observe that every connected component of a $P_4$-free graph is either a triangle or a star. Let $G$ be a $P_4$-free graph with the maximum number of copies of $M_2$.
Let $G'$ be the graph obtained by removing a star component $S_r$ from $G$ (we are done if there is no such component). Then $\cN(M_2,G)=\cN(M_2,G')+(r-1)|E(G')|$. 

Assume first $G'=D(3,n-r)$. If $r\ge 3$, then we can remove three vertices from $S_r$ and place a triangle on those vertices. It is easy to see that the number of copies of $M_2$ increases this way, a contradiction. If $r=1$ or $r=2$, we are done if $n-r$ is divisible by 3 (as in that case the union of $D(3,n-r)$ and $S_r$ is $D(3,n)$). 

Otherwise, we have an $S_1$ or $S_2$ component in $G'$. We unite the two star components. If they were two isolated vertices, then we add an edge connecting them, if they were an isolated vertex and an edge, we place a triangle there. In these cases the number of copies of $M_2$ clearly increases. If they were two edges, we delete them and place a triangle on three of these vertices. In this case we removed a copy of $M_2$, but increased the number of edges. As there is at least one triangle component in $G$, this increases the number of copies of $M_2$ by at least three, thus the total number of copies of $M_2$ increases, a contradiction.

Assume now $G'\neq D(3,n-r)$.  Note that we can assume $n-r=4$ and $G'=M_2$. Indeed, otherwise both the number of copies of $M_2$ and the number of edges are maximized by $D(3,n-r)$ (using Theorem \ref{erga} and induction). If $r=n-4\ge 3$, just as in the other case above, we can remove three vertices from $S_r$ and place a triangle on those vertices to increase the number of copies of $M_2$, a contradiction. If $r=1$, $G$ consists of two edges and an isolated vertex, but an edge and a triangle contains more copies of $M_2$, a contradiction. If $r=2$, then $G=M_3$, and $2K_3$ contains more copies of $M_2$, a contradiction finishing the proof.
\end{proof}
Using the well-known fact that $\ex(n,F)=O(n)$ only if $F$ is a forest, we can prove an asymptotic result for $\ex(n,M_k,F)$ in case $F$ contains a cycle and we know $\ex(n,F)$ asymptotically.

\begin{observation}\label{matchbip}
If $F$ is not a forest, then $\ex(n,M_k,F)=(1+o(1))\ex(n,F)^k/k!$.
\end{observation}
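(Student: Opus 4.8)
The plan is to prove matching upper and lower bounds, both of which reduce to the elementary observation that a copy of $M_k$ is simply a choice of $k$ pairwise disjoint edges. For the upper bound, if $G$ is any $F$-free graph on $n$ vertices with $e:=|E(G)|\le \ex(n,F)$ edges, then every copy of $M_k$ is obtained by selecting $k$ of its edges, so $\cN(M_k,G)\le\binom{e}{k}\le \ex(n,F)^k/k!$. This already yields $\ex(n,M_k,F)\le \ex(n,F)^k/k!$, so only the lower bound requires work.

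For the lower bound I would take $H$ to be an extremal $F$-free graph on $n$ vertices, so $H$ has $e=\ex(n,F)$ edges, and count ordered $k$-tuples of distinct edges. Writing $d_v$ for the degree of $v$ in $H$, the number of such tuples equals $e(e-1)\cdots(e-k+1)=(1-o(1))e^k$, while a tuple fails to form a matching only if two of its edges meet at a common vertex. The number of ordered pairs of distinct edges sharing a vertex is $\sum_v d_v(d_v-1)\le\sum_v d_v^2$, and a union bound over the $\binom{k}{2}$ pairs of coordinates shows that the number of ``bad'' tuples is at most $\binom{k}{2}\big(\sum_v d_v^2\big)e^{k-2}$. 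Since $k!\,\cN(M_k,H)$ is exactly the number of tuples that are genuine matchings, it remains to argue that $\sum_v d_v^2=o(e^2)$.

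The key point, and the only place where the hypothesis that $F$ is not a forest enters, is that this second moment is negligible. Using the crude bound $\sum_v d_v^2\le \Delta\sum_v d_v\le (n-1)\cdot 2e<2ne$, it suffices to know that $n=o(e)$, i.e. that $\ex(n,F)/n\to\infty$. This is precisely where I invoke the cited fact that $\ex(n,F)=O(n)$ forces $F$ to be a forest; concretely, since $F$ contains a cycle $C_\ell$, being $C_\ell$-free implies being $F$-free, whence $\ex(n,F)\ge \ex(n,C_\ell)=\omega(n)$. Combining the estimates gives $k!\,\cN(M_k,H)\ge (1-o(1))e^k-\binom{k}{2}\cdot 2ne\cdot e^{k-2}=(1-o(1))\ex(n,F)^k$, which matches the upper bound.

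The main (and essentially only) obstacle is the control of $\sum_v d_v^2$: a near-extremal graph with a few very high-degree vertices could in principle concentrate its edges and ruin the matching count. The resolution above sidesteps any appeal to regularity of extremal graphs by observing that the trivial bound $\sum_v d_v^2\le 2ne$ is already $o(e^2)$ exactly because $F$ is not a forest. I would record the error terms with explicit constants depending only on $k$ so that the two $o(1)$ terms are transparent, but no further structural information about $F$ or its extremal graphs is needed.
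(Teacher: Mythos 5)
Your proof is correct, and its lower-bound argument is organized differently from the paper's. The paper proceeds by induction on $k$: in an extremal $F$-free graph $G$ with $e=\ex(n,F)$ edges, any copy of $M_{k-1}$ can be extended to a copy of $M_k$ by any of the at least $e-(2k-2)n=(1+o(1))e$ edges avoiding its $2k-2$ vertices, and each $M_k$ arises from exactly $k$ such extensions. You instead count ordered $k$-tuples of distinct edges in one shot and subtract, via a union bound over the $\binom{k}{2}$ pairs of coordinates, the tuples containing two intersecting edges, using $\sum_v d_v(d_v-1)\le \Delta\cdot 2e\le 2ne$. The two arguments are close relatives: both are elementary, and both use the hypothesis that $F$ is not a forest only through the single fact $n=o(\ex(n,F))$ --- in the paper this makes the greedy extension essentially lossless, in yours it makes the second moment $\sum_v d_v^2$ negligible against $e^2$. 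What your version buys is a non-inductive proof with error terms that are explicit and uniform in $k$ from the start; what the paper's induction buys is that it never needs any degree statistics at all, only the trivial bound $(2k-2)n$ on the number of edges meeting a fixed set of $2k-2$ vertices. Your upper bound ($\cN(M_k,G)\le\binom{e}{k}\le e^k/k!$) is the same as the paper's.
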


\begin{proof}
Consider an $F$-free graph. We can pick each of the $k$ edges $\ex(n,F)$ ways, and we count each copy of $M_k$ exactly $k!$ times.

Let us consider now an $F$-free graph $G$ with $\ex(n,F)$ edges. We claim that it contains $(1+o(1))\ex(n,F)^k/k!$ copies of $M_k$. We prove it by induction on $k$. The base case $k=1$ is immediate. Assume that the statement holds for $k-1$ and prove it for $k$. Consider an arbitrary copy of $M_{k-1}$. Then it can be extended to an $M_k$ by any edge not incident to its $2k-2$ vertices. Thus we can choose any of at least $\ex(n,F)-(2k-2)n=(1+o(1))\ex(n,F)$ edges. This way we obtain $\ex(n,F)^k/(k-1)!$, but count each copy of $M_k$ exactly $k$ times.
\end{proof}




\begin{thm}\label{matc}
$M_\ell$ is $F$-Tur\'an-good for every $F$ with a color-critical edge. 
\end{thm}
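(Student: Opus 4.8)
The plan is to combine Simonovits's exact edge bound (Theorem~\ref{sim}) with a careful expansion of the matching count in terms of the degree sequence. Write $k=\chi(F)\ge 3$ (if $\chi(F)=2$ then $F=K_2$ and $M_\ell$ contains $F$, so the statement is vacuous) and set $N=|E(T_{k-1}(n))|$. Since $\chi(F)=k>k-1=\chi(T_{k-1}(n))$, every subgraph of $T_{k-1}(n)$ is $(k-1)$-colourable and hence cannot contain $F$; thus $T_{k-1}(n)$ is $F$-free and $\ex(n,M_\ell,F)\ge \cN(M_\ell,T_{k-1}(n))$, which is the required lower bound. (Moreover $M_\ell$, being a forest, contains no odd cycle and hence no copy of the non-bipartite $F$, so ``$F$-Tur\'an-good'' is meaningful here.)

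For the upper bound, let $G$ be $F$-free with $m=|E(G)|$; by Theorem~\ref{sim} we have $m\le N$, with equality only for $G=T_{k-1}(n)$. The main tool is the identity
\[
\ell!\,\cN(M_\ell,G)=\sum_{(e_1,\dots,e_\ell)\in E(G)^\ell}\ \prod_{i<j}\mathbb{1}[e_i\cap e_j=\emptyset],
\]
where the constraint automatically forces the $e_i$ to be distinct. Expanding each factor as $1-\mathbb{1}[e_i\cap e_j\ne\emptyset]$ and sorting the resulting terms by the number of intersection constraints gives
\[
\ell!\,\cN(M_\ell,G)=m^\ell-\binom{\ell}{2}\Big(\textstyle\sum_v d_v^2-m\Big)m^{\ell-2}+\cdots,
\]
where $d_v$ is the degree of $v$ and every omitted term carries at least two intersection constraints. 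Using $\sum_v d_v^2\le n^3$, one checks that for a dense graph with $m=\Theta(n^2)$ the leading term is $\Theta(n^{2\ell})$, the displayed correction is $\Theta(n^{2\ell-1})$, and each omitted term is $O(n^{2\ell-2})$.

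I would then compare $G$ with $T:=T_{k-1}(n)$ through the \emph{same} expansion, estimating $\Delta:=\ell!\,[\cN(M_\ell,T)-\cN(M_\ell,G)]$ term by term. If $m=N$ then $G=T$ by the uniqueness part of Theorem~\ref{sim} and there is nothing to prove, so assume $m\le N-1$. The zeroth-order contribution $N^\ell-m^\ell\ge N^\ell-(N-1)^\ell=\Theta(n^{2\ell-2})$ is strictly positive. The crucial point is that every remaining contribution to $\Delta$ is of smaller order: both $T$ and any competitive $G$ are nearly regular, with degrees $\tfrac{k-2}{k-1}n+O(1)$, so the corresponding correction terms almost cancel and their differences are $O(n^{2\ell-3})$; and an \emph{irregular} $G$ is only penalised, because by convexity $\sum_v d_v^2\ge 4m^2/n$, so enlarging $\sum_v d_v^2$ increases the subtracted $\Theta(n^{2\ell-1})$ correction in $\ell!\,\cN(M_\ell,G)$ faster than it can enlarge any later term. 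Hence $\Delta>0$ for every $F$-free $G\ne T$ once $n$ is large, giving $\cN(M_\ell,G)<\cN(M_\ell,T)$ and completing the proof.

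The main obstacle is exactly this bookkeeping of the lower-order terms: a one-edge deficit costs only $\Theta(n^{2\ell-2})$ copies of $M_\ell$, while the individual correction terms are as large as $\Theta(n^{2\ell-1})$, so a crude bound of the form $\cN(M_\ell,G)\le m^\ell/\ell!$ is too weak---it would allow an $(N-1)$-edge graph to beat $T$. The resolution is to exploit the cancellation obtained by comparing $G$ and $T$ inside one and the same inclusion--exclusion expansion, together with the convexity bound on $\sum_v d_v^2$, so that the positive edge-deficit term of order $n^{2\ell-2}$ becomes the genuine leading difference. Observation~\ref{matchbip} already shows that the two sides agree to leading order; what remains is to make the ``$O(n^{2\ell-3})$'' estimates uniform over all $F$-free $G$ (in particular, to quantify how far $G$ may stray from regular before the convexity penalty dominates), and this second-order comparison is the only delicate part.
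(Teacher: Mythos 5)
There is a genuine gap, and it sits exactly where you yourself locate it: the ``second-order comparison''. Your lower bound and the use of Theorem~\ref{sim} are fine, but the core of the argument --- showing that an $F$-free graph $G$ with $m\le N-1$ edges has fewer copies of $M_\ell$ than $T_{k-1}(n)$ --- rests on two assertions that are never proved and cannot be derived from the tools you invoke. First, the claim that every ``competitive'' $G$ is nearly regular with degrees $\tfrac{k-2}{k-1}n+O(1)$: Theorem~\ref{sim} gives you only an edge count, no structural information whatsoever about $F$-free graphs, and this near-regularity claim is essentially a strong stability statement, i.e.\ the actual content of the theorem. Second, the fallback that an irregular $G$ ``is only penalised'' by convexity does not survive scrutiny: when $m=N-1$ your gain $N^\ell-m^\ell$ is $\Theta(n^{2\ell-2})$, which is the same order as \emph{all} the discarded terms of the expansion (those with at least two intersection constraints), and those terms are not functions of $m$ and $\sum_v d_v^2$ alone --- they involve statistics such as $\sum_v d_v^3$ and $\sum_{uv\in E(G)}d_ud_v$ that can fluctuate independently. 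Concretely, if the degrees of $G$ deviate from the mean by $\Theta(\sqrt{n})$ on $\Theta(n)$ vertices, the convexity excess is $E=\Theta(n^2)$ and the penalty $\binom{\ell}{2}Em^{\ell-2}=\Theta(n^{2\ell-2})$ has exactly the same order as the one-edge gain and as the uncontrolled differences in the higher-order terms; no inequality among these three quantities follows from anything you have established, so the sign of $\Delta$ cannot be determined. The bookkeeping you defer is not a technicality to be ``made uniform'' --- it is the theorem.

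The paper avoids this regime entirely by a chromatic-number dichotomy, combined with induction on $\ell$ (base case $\ell=1$ being Theorem~\ref{sim}). If $\chi(G)>k-1$, a theorem of Erd\H{o}s and Simonovits \cite{valenc} produces a vertex whose degree is smaller by $\Omega(n)$ than the minimum degree of $T_{k-1}(n)$; deleting it and applying Simonovits's bound on $n-1$ vertices yields $|E(G)|\le |E(T_{k-1}(n))|-\Omega(n)$. With an $\Omega(n)$ edge deficit, the crude count (pick an edge, then an $M_{\ell-1}$ disjoint from it, bounded via the induction hypothesis) already loses $\Omega(n^{2\ell-1})$, which swamps every $O(n^{2\ell-2})$ correction --- no cancellation is needed. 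If instead $\chi(G)\le k-1$, then $G$ can be completed to a complete $(k-1)$-partite graph without creating a copy of $F$, and a direct vertex-moving argument (shifting a vertex from a larger part to a smaller one, comparing lost and gained copies by induction on $\ell$) shows the balanced complete multipartite graph is optimal. In other words, precisely in the case where your expansion becomes inconclusive --- small edge deficit --- the paper replaces analytic bookkeeping by the structural fact that such a graph may be assumed complete multipartite. That dichotomy (or some substitute stability input) is the idea missing from your proposal.
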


\begin{proof}
We use induction on $\ell$, the base case $\ell=1$ is Theorem \ref{sim}. Recall that by Observation \ref{matchbip} we have $\ex(n,M_\ell,F)=\Theta(n^{2l})$.
Let $n$ be large enough, $G$ be an $F$-free graph on $n$ vertices with the largest number of copies of $M_\ell$, and let $\chi(F)=k+1$. 

{\bf Case 1.} $G$ has chromatic number more than $k$. We will show that $|E(T_k(n)|\cN(M_{\ell-1},T_k(n-2))-\cN(M_\ell,G)=\Omega(n^{2\ell-1})$ and $|E(T_k(n)|\cN(M_{\ell-1},T_k(n-2))-\cN(M_\ell,T_k(n))=O(n^{2\ell-2})$, which implies that $T_k(n)$ contains more copies of $M_\ell$ than $G$, a contradiction.

A theorem of Erd\H os and Simonovits \cite{valenc} states that if $F$ is $(k+1)$-chromatic and has a color-critical edge, then there is a vertex $v$ of degree at most $(1-\frac{1}{k-4/3})n$ in every $n$-vertex $F$-free graph with chromatic number more than $k$. We claim that $|E(T_k(n)|-|E(G)|=\Omega(n)$. Indeed, by deleting $v$ we obtain a graph with at most $|E(T_k(n-1))|$ edges, and we can delete a vertex from $T_k(n)$ to obtain $T_k(n-1)$. As we delete $\Omega(n)$ more edges in the second case, we are done with the claim.

We count the copies of $M_\ell$ by picking an edge and then picking $M_{\ell-1}$ independently from it. In $G$, this can be done at most $(|E(T_k(n)|-\Omega(n))\cN(M_{\ell-1},T_k(n-2))$ ways. 
Compared to $|E(T_k(n)|\cN(M_{\ell-1},T_k(n-2))$, this is smaller by $\Theta(n^{2\ell-1})$.

We claim that $|E(T_k(n)|\cN(M_{\ell-1},T_k(n-2))-|\cN(M_\ell,T_k(n))|=O(n^{2\ell-2})$, which finishes the proof. In fact we show the stronger statement $|E(T_k(n)||E(T_k(n-2)|\dots|E(T_k(n-2\ell+2)|-|\cN(M_\ell,T_k(n))|=O(n^{2\ell-2})$. Indeed, we can pick the first edge $|E(T_k(n)|$ ways. Then we pick the remaining edges one by one. To pick the $i$th edge, we have to pick an edge from the graph $G_i$ we obtain by deleting the endpoints of the edges picked earlier. $G_i$ is a complete $k$-partite graph on $n-2i+2$ vertices with parts of size at most $\lceil n/k\rceil$ and at least $\lfloor n/k\rfloor-i+1$, as we removed at most $i-1$ vertices from each part. Therefore, we could obtain $T_k(n-2i+2)$ from $G_i$ by moving a constant $c_i$ number of vertices from some parts to other parts. It is easy to see that each such move decreases the number of edges by a constant, therefore we have $|E(G_i)|=|E(T_k(n-2i+2)|-c'_i$ for some constant $c_i'$. Hence $|\cN(M_\ell,T_k(n))|=|E(T_k(n)|(|E(T_k(n-2)|-c'_1)\dots(|E(T_k(n-2\ell+2)|-c_{\ell-1}')$. Each term we subtract from $|E(T_k(n)||E(T_k(n-2)|\dots|E(T_k(n-2\ell+2)|$ has a constant $c_i'$ and at most $\ell-1$ terms that are quadratic, thus the difference is $O(n^{2\ell-2})$.

{\bf Case 2}. $G$ has chromatic number at most $k$. Then we can assume that $G$ is a complete $k$-partite graph, as adding edges do not decrease the number of copies of $M_\ell$ and this way we cannot violate the $F$-free property. We show that making the graph more balanced does not decrease (in fact it increases) the number of copies of $M_\ell$. More precisely,
assume that part $A$ has size $a-1$ and part $B$ has size at least $a+1$, and let $G'$ be $G$ restricted to the other parts. Let us move a vertex $v$ from $B$ to $A$. This means we delete the edges from $v$ to the $a-1$ vertices $u_1,\dots,u_{a-1}$ of $A$, and add edges from $v$ to the other (at least) $a$ vertices $w_1,\dots,w_a$ of $B$. We claim that the resulting graph has more copies of $M_\ell$. We show this by induction on $\ell$, the base case $\ell=1$ is well-known and trivial.

When deleting the edge $vu_i$, we deleted the copies of $M_\ell$ that contained this edge and an $M_{\ell-1}$ on the other vertices. The graph $G_i$ on those other vertices consists of $G'$ and a part of size $a-2$ and a part of size $b\ge a$. Altogether we removed $\sum_{i=1}^{a-1} \cN(M_{\ell-1},G_i)$ copies of $M_\ell$.

When adding the edge $vw_i$, we added copies of $M_\ell$ that contained this edge and an $M_{\ell-1}$ on the other vertices. The graph $G'_i$ on those other vertices consists of $G'$, a part of size $a-1$, and a part of size $b-1\ge a-1$. Altogether we added at least $\sum_{i=1}^{a} \cN(M_{\ell-1},G'_i)$ copies of $M_\ell$. By induction, $G'_i$ has more copies of $M_{\ell-1}$ than $G_i$, finishing the proof (as it shows that we added more copies of $M_\ell$, than what was deleted, even without using the edge $vw_a$). 
\end{proof}

\begin{observation}\label{sta}
If $H$ is $(k-2)$-regular, then $\ex(n,H,S_k)=\lfloor n/|V(H)|\rfloor$.
\end{observation}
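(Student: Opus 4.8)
The plan is to prove matching upper and lower bounds, with the whole content concentrated in a single degree-saturation observation. For the lower bound I would simply take the disjoint union of $\lfloor n/|V(H)|\rfloor$ copies of $H$, padded out with isolated vertices to reach $n$ vertices. Since $H$ is $(k-2)$-regular, this graph has maximum degree $k-2<k-1$, so no vertex is the center of an $S_k$; hence it is $S_k$-free and it plainly contains at least $\lfloor n/|V(H)|\rfloor$ copies of $H$.

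For the upper bound, the key point is that $S_k$-freeness is exactly a maximum-degree condition: a graph is $S_k$-free precisely when no vertex has $k-1$ neighbors, i.e. $\Delta(G)\le k-2$. Let $G$ be such a graph on $n$ vertices, and consider any copy of $H$ in $G$, with vertex set $W$. Because $H$ is $(k-2)$-regular, each $w\in W$ already has $k-2$ neighbors inside the copy, while $\deg_G(w)\le k-2$; therefore $\deg_G(w)=k-2$ and \emph{all} edges of $G$ at $w$ lie inside the copy. Consequently no edge of $G$ leaves $W$, so $W$ is a union of connected components of $G$, every vertex of $G[W]$ has degree exactly $k-2$, and the copy already uses every edge of $G[W]$. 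Hence $G[W]\cong H$ and the copy coincides with $G[W]$.

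When $H$ is connected—which covers the cases $K_3$ and $C_4$ that are relevant to the table—each copy lies inside a single component $C$ of $G$, and the argument above forces $C=W$ and $G[C]\cong H$, with exactly one copy of $H$ supported on $C$. Distinct copies therefore occupy pairwise distinct components, each of which uses exactly $|V(H)|$ vertices, so their number is at most $\lfloor n/|V(H)|\rfloor$. Together with the lower bound this gives equality.

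The step that requires the most care is the passage from ``each copy equals some $G[W]$'' to the clean count. For connected $H$ it is immediate, since distinct copies are forced onto distinct components and each such component carries exactly one copy. For a disconnected $(k-2)$-regular $H$ one would instead have to count unordered families of components matching the components of $H$, which is the only genuinely fiddly point; it does not arise for the small graphs appearing in the table, so I would state and use the connected case.
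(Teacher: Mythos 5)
Your proof is correct and follows essentially the same route as the paper: the lower bound comes from a disjoint union of copies of $H$ (padded with isolated vertices), and the upper bound from the observation that $S_k$-freeness is exactly $\Delta(G)\le k-2$, so every copy of $H$ saturates the degrees at all of its vertices and must therefore be a union of connected components of $G$. One point genuinely in your favor: your restriction to connected $H$ is not just a convenience, since the statement as literally written fails for disconnected $(k-2)$-regular $H$ (for $H=2K_3$ and $k=4$, the disjoint union of $\lfloor n/3\rfloor$ triangles is $S_4$-free yet contains $\binom{\lfloor n/3\rfloor}{2}$ copies of $H$); the paper's one-line proof makes the same implicit assumption when it jumps from ``no further edges are incident to a copy'' to ``copies of $H$ are vertex-disjoint,'' and this is harmless only because the observation is invoked solely for $H=C_4$, which is connected.
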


\begin{proof} Let $G$ be an $S_k$-free graph. Obviously for any copy of $H$ in $G$, there are no further edges incident to its vertices, thus copies of $H$ are vertex-disjoint. On the other hand, one can take $\lfloor n/|V(H)|\rfloor$ vertex disjoint copies of $H$, and the resulting graph is $S_k$-free.
\end{proof}

\begin{proposition}\label{rsze}
$n^{\ell+2-o(1)}\le \ex(n,T_\ell,B_k)=o(n^{\ell+2})$.
\end{proposition}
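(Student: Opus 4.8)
The plan is to reduce everything to counting triangles, for which Proposition \ref{als2} already gives $\cN(K_3,G)=o(n^2)$ on $B_k$-free graphs, and then to attach the $\ell$ pendant leaves to the apex of the triangle. The organizing observation is that the apex of $T_\ell$ — the unique vertex of degree $\ell+2$ — is canonically determined in any copy, so counting by apex introduces no overcounting.

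For the upper bound I would write, for a $B_k$-free graph $G$ with neighborhoods $N(x)$ and degrees $d(x)$,
\[
\cN(T_\ell,G)=\sum_{x\in V(G)}e(G[N(x)])\binom{d(x)-2}{\ell},
\]
since a copy of $T_\ell$ with apex $x$ is exactly a choice of an edge inside $N(x)$ (the two further triangle vertices) together with $\ell$ more distinct neighbors of $x$ as leaves. Bounding $\binom{d(x)-2}{\ell}\le\binom{n}{\ell}=O(n^\ell)$ and using $\sum_x e(G[N(x)])=3\cN(K_3,G)$ (each triangle is seen once at each of its three vertices), Proposition \ref{als2} yields $\cN(T_\ell,G)\le O(n^\ell)\cdot o(n^2)=o(n^{\ell+2})$.

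For the lower bound I would use the Ruzsa--Szemer\'edi / Behrend construction, which also reproves the lower bound of Proposition \ref{als2}. Fix an odd $N$ and a $3$-term-arithmetic-progression-free set $S\subseteq\mathbb Z_N$ with $|S|=N^{1-o(1)}$, and build the tripartite graph on $A\sqcup B\sqcup C$ with $A=B=C=\mathbb Z_N$, joining $a\in A$ to $a+s\in B$, $a+s\in B$ to $a+2s\in C$, and $a\in A$ to $a+2s\in C$ for every $s\in S$. The standard argument shows each edge lies in a unique triangle, so the graph is $B_2$-free, hence $B_k$-free for all $k\ge2$; furthermore each $a\in A$ is the apex of exactly $|S|$ triangles and has degree $2|S|=N^{1-o(1)}$. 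Feeding this into the identity above and choosing, for each $a\in A$, one of its $|S|$ triangles and then $\ell$ of its remaining $2|S|-2$ neighbors as leaves, I obtain at least
\[
N\cdot|S|\cdot\binom{2|S|-2}{\ell}=n^{2-o(1)}\cdot n^{\ell-o(1)}=n^{\ell+2-o(1)}
\]
copies of $T_\ell$, where $n=3N$.

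The main obstacle is exactly the lower bound: the triangle count of Proposition \ref{als2} alone is insufficient, because a priori a $B_k$-free graph with $n^{2-o(1)}$ triangles could concentrate them on low-degree vertices and so contain very few copies of $T_\ell$. What rescues the argument is the additional structural fact that the Behrend-type construction places all of its $n^{2-o(1)}$ triangles on vertices of degree $n^{1-o(1)}$, which is what supplies the $n^{\ell-o(1)}$ leaf choices; verifying both the local linearity (for $B_k$-freeness) and this degree bound is the only nontrivial part, the upper bound being a one-line consequence of Proposition \ref{als2}.
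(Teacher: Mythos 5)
Your proof is correct, and its overall strategy matches the paper's: the upper bound is identical (combine the $o(n^2)$ triangle count of Proposition \ref{als2} with the $O(n^\ell)$ choices of leaves), and the lower bound also comes from the Ruzsa--Szemer\'edi construction. The genuine difference is in how the lower bound is finished. The paper uses the construction as a black box: it only invokes that there is a graph with $n^{2-o(1)}$ edges in which every edge lies in exactly one triangle, so a vertex of degree $d$ lies in exactly $d/2$ triangles and $\cN(T_\ell,G)=\sum_v \tfrac{d(v)}{2}\binom{d(v)-2}{\ell}$; it then applies the power mean inequality to $\sum_v d(v)=n^{2-o(1)}$ to conclude $\sum_v d(v)^{\ell+1}\ge n^{\ell+2-o(1)}$, with no information about the degree distribution required. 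You instead open the box: you take the explicit Behrend-based tripartite graph and verify it is $2|S|$-regular, so every vertex supporting a triangle has degree $N^{1-o(1)}$, and the count is immediate. What each buys: the paper's convexity step makes the argument degree-distribution-free, so it proves the stronger statement that \emph{any} graph with $n^{2-o(1)}$ edges, each in exactly one triangle, contains $n^{\ell+2-o(1)}$ copies of $T_\ell$ --- exactly the worry you raise about triangles concentrating on low-degree vertices is dissolved without inspecting the construction; your version needs the (true, standard) regularity of the specific construction but avoids the power mean inequality entirely and is more self-contained. Your exact identity $\cN(T_\ell,G)=\sum_{x}e(G[N(x)])\binom{d(x)-2}{\ell}$, valid for $\ell\ge 1$ since the apex is then the unique vertex of degree $\ell+2$, is also a cleaner formulation of the double counting that the paper carries out in words.
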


\begin{proof}
The upper bound easily follows from Proposition \ref{als2}: there are $o(n^2)$ triangles in a $G$-free graph, and $O(n^\ell)$ ways to choose the $\ell$ additional leaves. For the lower bound, we use the same construction that gives the lower bound in Proposition \ref{als2}. It is a construction by Ruzsa and Szemer\'edi \cite{rsz}, a graph $G$ with $n^{2-o(1)}$ edges where every edge is contained in exactly one triangle. Observe that a vertex with degree $d$ is contained in exactly $d/2$ triangles. 

We have that $G$ contains $n^{2-o(1)}$ triangles. Observe that the number of copies of $T_\ell$ in $G$ is $\sum_{v\in V(G)} \frac{d(v)}{2}\binom{d(v)-2}{\ell}$. Indeed, we pick a vertex $v$, pick a neighbor of $v$ $d_i$ ways, that determines a triangle. We count every triangle
containing $v$ twice. Then we pick $l$ other neighbors of $v$ to be added as leaves. 

By the power mean inequality, we have \[n^{2-o(1)}\le \sum_{v\in V(G)} d(v) \le n\left(\frac{\sum_{v\in V(G)} d(v)^{\ell+1}}{n}\right)^{1/{\ell+1}},\]

which implies $\sum_{v\in V(G)} d(v)^{\ell+1}\ge n^{\ell+2-o(1)}$ and finishes the proof.
\end{proof}




\begin{thm}\label{tetel} 
If $n$ is large enough, then
\begin{displaymath}
\ex(n,T_1, C_4)=\cN(T_1,F(n))=
\left\{ \begin{array}{l l}
\binom{n}{2}-\frac{3(n-1)}{2} & \textrm{if\/ $n$ is odd},\\
\binom{n}{2}-2n-3 & \textrm{if\/ $n$ is even}.\\
\end{array}
\right.
\end{displaymath}

\end{thm}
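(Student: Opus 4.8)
The plan is to prove matching lower and upper bounds. For the lower bound I would verify that $F(n)$ is $C_4$-free and count its paws directly: every triangle of $F(n)$ consists of the apex $v$ together with one matching edge, so there are $\lfloor (n-1)/2\rfloor$ triangles, and each extends to a copy of $T_1$ by choosing a pendant neighbour of one of its three vertices. Since the two matching vertices have degree $2$ while $v$ has degree $n-1$, every triangle yields exactly $(n-1)-2=n-3$ paws (all pendants hang off $v$), giving $\cN(T_1,F(n))=(n-3)\lfloor(n-1)/2\rfloor$, which equals the stated value.

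For the upper bound the starting point is the identity
\[
\cN(T_1,G)=\sum_{w\in V(G)}\big(d(w)-2\big)\,t(w),
\]
where $d(w)$ is the degree and $t(w)$ the number of triangles through $w$; it holds because a paw is a triangle together with a pendant edge at a chosen ``hub''. Two facts about a $C_4$-free graph $G$ drive everything: (i) the neighbourhood of any vertex spans a matching (two edges meeting inside $N(w)$ would close a $C_4$ through $w$), so $t(w)\le\lfloor d(w)/2\rfloor$; and (ii) any two vertices have at most one common neighbour, so $\sum_w\binom{d(w)}{2}\le\binom n2$. Feeding (i) into the identity and then applying (ii) gives the clean bound
\[
\cN(T_1,G)\le\tfrac12\sum_w d(w)\big(d(w)-2\big)=\sum_w\binom{d(w)}2-|E(G)|\le\binom n2-|E(G)|.
\]
Whenever $|E(G)|\ge\binom n2-\cN(T_1,F(n))$ this already yields $\cN(T_1,G)\le\cN(T_1,F(n))$, disposing of all sufficiently dense graphs in one stroke.

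The whole difficulty is therefore concentrated in the \emph{sparse} regime, where the bound above is too weak. Here a near-extremal graph must resemble $F(n)$: by (ii) at most one vertex $v$ can have degree exceeding $(n+1)/2$, so I would isolate $v$ and split the paws into those whose triangle passes through $v$ and those whose triangle avoids $v$. For the first group, the key observation is that each non-neighbour of $v$ is adjacent to at most one neighbour of $v$ (otherwise a $C_4$ appears through $v$), so the ``wings'' of the triangles at $v$ carry very little extra degree; this bounds the first group by essentially $\tfrac12 d(v)(d(v)-2)$, which is increasing in $d(v)$ and equals the friendship value exactly at $d(v)=n-1$. For the second group I would use that a triangle avoiding $v$ must contain at least two non-neighbours of $v$ (one or two vertices of such a triangle lying in $N(v)$ again forces a $C_4$), and that these non-neighbours have degree at most the deficiency $s=n-1-d(v)$, so such triangles are scarce.

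The main obstacle is precisely the intermediate case where $v$ has degree a little below $n-1$, with $s=n-1-d(v)$ growing: the first group then falls short of $\cN(T_1,F(n))$ by about $(n-3)s$, and one must show the triangles avoiding $v$ cannot recover this deficit. This needs a genuinely finer count than degree sums permit — controlling how many secondary vertices of degree $\approx n/2$ can coexist (sparsity of $G$ together with the overlap constraint (ii) limits this to $O(1)$) and bounding the paws they generate — and it is here that $C_4$-freeness, rather than the mere degree sequence, has to be used, since the degree-sequence relaxation alone admits configurations that exceed the friendship value but are not realizable. Once this regime is shown to give strictly fewer than $\cN(T_1,F(n))$ paws for large $n$, combining it with the dense case completes the proof and identifies $F(n)$ as the extremal graph (the even case, where $F(n)$ itself lies in the sparse regime, being handled by the same analysis).
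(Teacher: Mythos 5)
Your lower-bound computation for $F(n)$ is correct, and your ``dense'' reduction $\cN(T_1,G)\le\binom{n}{2}-|E(G)|$ is essentially sound (one small repair: the termwise inequality $(d(w)-2)t(w)\le\binom{d(w)}{2}-d(w)/2$ fails for $d(w)\le 1$, where the right-hand side is negative; restricting the sum to $d(w)\ge 2$ costs at most $n/2$ and merely shifts the density threshold). But there is a genuine gap, and it sits exactly where the theorem lives: the sparse regime. Your bound disposes of every graph with roughly $2n$ or more edges --- including the asymptotic competitors such as polarity-type graphs, which have $\Theta(n^{3/2})$ edges and $\Theta(n^{3/2})$ triangles and achieve $(1/2+o(1))n^2$ paws --- so the entire content of the theorem is the regime $|E(G)|=O(n)$, and there you only outline a plan and explicitly defer its key step (``Once this regime is shown to give strictly fewer\dots''). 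Concretely, when the maximum degree $d(v)$ is not very close to $n-1$, your group-one bound falls short by about $(n-3)s$ with $s=n-1-d(v)$, and your only control on group two is that non-neighbours of $v$ have degree at most $s$; this gives nothing once $s=\Omega(n)$, and if no vertex has degree above $(n+1)/2$ your plan yields no bound at all beyond the degree-sequence relaxation, which, as you yourself note, admits configurations exceeding the friendship value. A further slip: a triangle avoiding $v$ with exactly one vertex in $N(v)$ does \emph{not} force a $C_4$; only two or three vertices of the triangle in $N(v)$ do, so your parenthetical justification is misstated even though the conclusion (at least two non-neighbours) is right.

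The paper closes precisely this gap with a different device, which is worth comparing against your sketch. It counts paws by unordered pairs $\{u,v\}$ (one vertex pendant, the other a degree-two vertex of the paw): $C_4$-freeness gives at most two paws belonging to each pair, and each paw belongs to exactly two pairs, whence $\cN(T_1,G)\le\binom{n}{2}$. Under the indirect assumption $\cN(T_1,G)>\cN(T_1,F(n))$, the auxiliary graph $H$ on $V(G)$ whose edges are the pairs with \emph{no} paw belonging to them has fewer than $2n-3$ edges, so some vertex $x$ has $H$-degree at most $3$; since every edge of $G$ is also an edge of $H$, a short case analysis on the $G$-degree of $x$ (which is $1$, $2$ or $3$) forces a vertex of $G$-degree at least $n-8$. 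Deleting its at most $7$ non-neighbours destroys only a bounded number (at most $532$) of paws, and what remains has a dominating vertex and hence is a subgraph of a friendship graph, giving the contradiction. This auxiliary-graph step is exactly the ``genuinely finer count'' your proposal calls for but does not supply; without it, or something equivalent, the upper bound is not proved. One last remark: for even $n$ your (correct) count $(n-3)(n-2)/2$ equals $\binom{n}{2}-2n+3$, not the displayed $\binom{n}{2}-2n-3$; the statement's display appears to carry a typo, so your claim that your count ``equals the stated value'' is literally false, although your value is the right one and is the one consistent with the paper's own proof.
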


\begin{proof}
Assume indirectly that there exists an $n$-vertex $C_4$-free graph $G$ with more than $\cN(T_1,F(n))$ copies of $T_1$. We will count the copies of $T_1$ the following way. Consider an unordered pair $\{u,v\}$ of vertices. We count the copies of $T_1$ where one of $u$ and $v$ corresponds to the vertex of degree 1 in $T_1$, and the other corresponds to a vertex of degree two in $T_1$. In $G$, $u$ and $v$ have at most one common neighbor $w$, that has to correspond to the vertex of degree three in $T_1$. Then the last vertex of the $T_1$ is a common neighbor of either $u$ and $w$ or $v$ and $w$. Thus there are at most two copies of $T_1$ obtained this way, and we count every $T_1$ twice this way. We say that these copies of $T_1$ \textit{belong} to the pair $\{u,v\}$, thus every copy of $T_1$ belongs to at most two pairs of vertices. Note that this argument immediately gives the upper bound $\ex(n,T_1, C_4)\le\binom{n}{2}$.

\begin{clm}
There is a vertex of $G$ with degree at least $n-8$.
\end{clm}

\begin{proof}

Let us consider an auxiliary graph $H$ on the same vertex set $V(G)$, where $u$ and $v$ are connected in $H$ if no $T_1$ belongs to them in $G$. Obviously, $H$ has less than $2n-3$ edges by our indirect assumption, thus there is a vertex $x$ with degree at most 3 in $H$. Let $\{x_1,x_2,x_3\}$ contain all the neighbors of $x$ in $H$. Observe that $G$ is a subgraph of $H$. Indeed, if $uv\in E(G)$, and $w$ is their common neighbor, they form a triangle, and  $uw$ and $vw$ both have a common neighbor in the triangle. Thus neither the pair $(u,w)$, nor the pair $(v,w)$ has another common neighbor, that could correspond to the fourth vertex of $T_1$. Thus no copy of $T_1$ belongs to $\{u,v\}$, hence $uv\in E(H)$. This implies that $x$ has degree at most 3 in $G$. 

Assume first that $x$ is connected to $x_1,x_2,x_3$ in $G$. Then for every other vertex $y$, there is a $P_3$ in $G$ from $x$ to $y$, because  they are not connected to $x$ in $H$. Therefore, $y$ is connected to $x_1$, $x_2$ or $x_3$ in $G$, but only one of them, as they have another common neighbor $x$. Let $X_i$ be the set of neighbors of $x_i$ in $G$, that are different from $x,x_1,x_2,x_3$. A vertex in $X_i$ can be connected in $G$ to at most one vertex of $X_1,X_2,X_3$, thus has degree at most 4 in $G$.


A vertex in $X_1$ is connected in $G$ by a $P_3$ to every vertex in $X_1$, but in $X_2$ to at most 
three vertices. Indeed, its only neighbors in $X_1,X_2,X_3$ are each connected to at most one vertex in $X_2$. Therefore  in the auxiliary graph $H$
at least $|X_1|(|X_2|-3+|X_3|-3)$ edges go from $X_1$ to $X_2\cup X_3$. By the same reasoning for $X_2$ and $X_3$, we obtain that 

\begin{eqnarray*}
|E(H)|\ge \frac{|X_1|(|X_2|+|X_3|-6)+|X_2|(|X_1|+|X_3|-6)+|X_3|(|X_2|+|X_1|-6)}{2}=\\
|X_1||X_2|+|X_1||X_3|+|X_2||X_3|-3(|X_1|+|X_2|+|X_3|)=|X_1||X_2|+|X_1||X_3|+|X_2||X_3|-3n+12.
\end{eqnarray*}
In particular, this is greater than $2n-3$ (which is a contradiction) unless the sum of the two smallest set, say $|X_2|+|X_3|$ is at most $5$ (if $n$ is large enough), which implies that $x_1$ has degree at least $n-8$. 

If the degree of $x$ is 2 in $G$, let without loss of generality $x_1$ and $x_2$ be its neighbors, and similarly to the previous case let $X_i$ be the set of neighbors of $x_i$ in $G$ that are different from $x,x_1,x_2$. Then all but at most one of the other vertices ($x_3$) is in $X_1\cup X_2$, as they are connected to $x$ by a $P_3$ in $G$. A vertex in $X_i$ is connected by a $P_3$ in $G$ to every vertex in $X_1$, but at most three vertices in $X_2$ (through its neighbors in $X_1$ and $X_2$, and $z$). Therefore, we have \[2n-3\ge |E(H)|\ge \frac{|X_1|(|X_2-3)+|X_2|(|X_1|-3)}{2}=|X_1||X_2|-3(n-3)/2,\] which implies that either $|X_1|$ or $|X_2|$ is at most 3, hence either $x_1$ or $x_2$ has degree at least $n-6$.

Finally, if $x$ has degree 1 in $G$, its neighbor is connected in $G$ to all but two of the other vertices, thus has degree at least $n-3$.
\end{proof}

Let $u$ have degree at least $n-8$ in $G$. Let $U$ be the set of at most 7 vertices not connected to $u$ and different from $u$. We claim that vertices in $U$ are in at most $7+15\binom{7}{3}=532$ copies of $T_1$. Indeed, each of those vertices is connected to $V(G)\setminus U$ by at most one edge, thus the triangle in $T_1$ is totally inside or totally outside $U$. Let us consider first the triangles totally outside $U$. Every neighbor $v$ of $u$ is in at most one such triangle (that consists of $v$, $u$ and their at most one common neighbor). At most 7 edges go from $U$ to the neighborhood of $u$, and there is only one way any one of those edges can extend a triangle outside $U$ to a copy of $T_1$. Thus there are at most 7 copies of $T_1$ where the triangle is totally outside $U$.

There are at most $\binom{7}{3}$ triangles inside $U$ (obviously there are even fewer, because of the $C_4$-free property). They each have three endpoints, and those points have degree at most 7, thus there are at most 5 ways to extend the triangle to a copy of $T_1$ from that endpoint.

Let us now delete the vertices of $U$ from $G$ to obtain $G'$. On the $n'=n-|U|$ vertices of $G'$, we have a vertex $u$ of degree $n'-1$ in $G'$. Obviously, there can only be a matching on the other vertices of $G'$, thus $G'$ is a subgraph of $F_{n'}$ and $\cN(T_1,G')\le \cN(T_1,F_{n'})$. Therefore, $\cN(T_1,G)\le \cN(T_1,F_{n'})+532<\cN(T_1,F(n))$, a contradiction. For the last inequality, observe that if we add $|U|$ vertices as neighbors of $u$, then each newly added vertex is in $\Omega(n)$ copies of $T_1$. 
\end{proof}

\begin{proposition}\label{stacli}
$S_4$ is 4-Tur\'an-good.
\end{proposition}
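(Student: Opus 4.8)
The plan is to reduce the problem to an optimization over complete tripartite graphs and then to show that balancing the parts is optimal. The lower bound $\ex(n,S_4,K_4)\ge \cN(S_4,T_3(n))$ is immediate, since $T_3(n)$ is $K_4$-free. For the upper bound, observe that $S_4=K_{1,3}$ is a complete multipartite graph, so Proposition \ref{gpl2.5} applies with $k=4$: there is a complete $3$-partite graph $G$ on $n$ vertices with $\ex(n,S_4,K_4)=\cN(S_4,G)$. Thus it suffices to maximize $\cN(S_4,\cdot)$ over complete tripartite graphs on $n$ vertices.

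Next I would write the objective explicitly. In a complete tripartite graph with parts of sizes $a,b,c$ (where $a+b+c=n$), a vertex in the part of size $a$ has degree $n-a$, and each copy of $S_4$ has a unique center, so
\[
\cN(S_4,G)=g(a)+g(b)+g(c),\qquad g(x):=x\binom{n-x}{3}.
\]
The goal becomes to show that, among integers $a,b,c\ge 0$ with $a+b+c=n$, the sum $g(a)+g(b)+g(c)$ is maximized precisely when the parts are as equal as possible, i.e.\ by $T_3(n)$. I would prove this by a smoothing argument: if the parts are not as equal as possible, then moving a single vertex from the largest part to the smallest part strictly increases the sum. Writing $\delta(x):=g(x+1)-g(x)=\tfrac{1}{6}(n-1-x)(n-2-x)(n-3-4x)$, moving one vertex from a part of size $q$ to a part of size $p$ changes the sum by $\delta(p)-\delta(q-1)$, so I must show $\delta(p)>\delta(q-1)$ whenever $p$ is the smallest part, $q$ is the largest part, and $q\ge p+2$.

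The main work, and the main obstacle, is establishing this inequality, because $g$ is \emph{not} concave in $x$: the function $\delta$ is positive for $x<(n-3)/4$, vanishes once, and is ``valley shaped,'' strictly decreasing on $[0,\,n/2-1]$ and increasing on $[n/2-1,\,n-2]$. When $q\le n/2$, both $p$ and $q-1$ lie in the range where $\delta$ is strictly decreasing and $p<q-1$, giving $\delta(p)>\delta(q-1)$ at once. The delicate regime is $q>n/2$: here $p$ being the smallest of three parts forces $p<n/4$, so $\delta(p)$ is either positive (when $p<(n-3)/4$, in which case it beats the negative $\delta(q-1)$ outright) or at worst negative of order $n^2$ in magnitude; meanwhile $q\approx n/2$ is forced whenever $p\approx n/4$, which makes $\binom{n-q}{2}$ of order $n^2$ and hence $\delta(q-1)$ negative of order $n^3$. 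Thus $\delta(p)>\delta(q-1)$ again. I expect the cleanest write-up to split exactly along $q\le n/2$ versus $q>n/2$ and, in the second case, to compare the magnitudes of $\delta(p)$ and $\delta(q-1)$ directly rather than relying on monotonicity of $\delta$.

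Finally, iterating the smoothing move terminates at the configuration in which all parts differ by at most one, namely $T_3(n)$, which therefore uniquely maximizes $\cN(S_4,\cdot)$ among complete tripartite graphs. Combined with the reduction from Proposition \ref{gpl2.5} and the matching lower bound, this yields $\ex(n,S_4,K_4)=\cN(S_4,T_3(n))$ for $n$ large, i.e.\ $S_4$ is $4$-Tur\'an-good.
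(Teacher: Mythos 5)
Your proposal is correct, and its first half---the lower bound from $T_3(n)$, the reduction to complete tripartite graphs via Proposition \ref{gpl2.5}, and the formula $\cN(S_4,K_{a,b,c})=g(a)+g(b)+g(c)$ with $g(x)=x\binom{n-x}{3}$---coincides with the paper's proof. Where you genuinely diverge is the optimization step. The paper fixes $a$ and asserts in one line that $b\binom{n-b}{3}+(n-a-b)\binom{a+b}{3}$ is maximized at $b=(n-a)/2$, then invokes symmetry; you instead run a local smoothing argument, moving one vertex from the largest part ($q$) to the smallest ($p$) and proving $\delta(p)>\delta(q-1)$ by splitting on $q\le n/2$ versus $q>n/2$. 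Your extra care is not wasted: since $g$ is not concave, the paper's one-line claim is in fact false for small $a$. For $a=0$ the quantity being maximized is $\cN(S_4,K_{b,n-b})$, whose maximum by Corollary \ref{induc} (Brown--Sidorenko) is attained near $b=n/2-\sqrt{(3n-4)/2}$, not at $b=n/2$; concretely, for $n=100$ and $a=0$ one has $38\binom{62}{3}+62\binom{38}{3}=1960192>1960000=100\binom{50}{3}$. The paper's claim does hold when both remaining parts lie in the concave range of $g$ (e.g.\ when $a\ge n/2$), and the theorem is of course true, but your case $q>n/2$---where $\delta(q-1)$ is negative of order $n^3$ while $\delta(p)$ is at worst negative of order $n^2$---is exactly what eliminates the unbalanced configurations such as $(0,b,n-b)$ that the paper's argument does not address, so your route is the more rigorous one. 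One small correction to your write-up: $\delta$ is not strictly decreasing on all of $[0,n/2-1]$; its minimum sits at $n/2-1-\Theta(1/n)$, and indeed $6\delta'(n/2-1)=1>0$. This does not hurt your argument, because at integer points one still has $\delta(n/2-2)-\delta(n/2-1)=n/2>0$ (for $n$ even), so the inequality $\delta(p)>\delta(q-1)$ you need for integers $p\le q-2$ with $q\le n/2$ remains valid; just state the monotonicity claim for integer arguments rather than for the real interval.
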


\begin{proof} Let $G$ be the $n$-vertex $K_4$-free graph with the most number of copies of $S_4$.
By Proposition \ref{gpl2.5}, we can assume $G=K_{a,b,c}$, we just have to optimize $a,b,c$. The number of $S_4$'s is $a\binom{b+c}{3}+b\binom{a+c}{3}+c\binom{a+b}{3}$. Let us consider a fixed $a$, and choose $b$. The first term is a constant, the other terms are $b\binom{n-b}{3}+(n-a-b)\binom{a+b}{3}$. This is maximized at $b=(n-a)/2$, thus we have that $b$ and $c$ differ by at most one. Similarly $a$ differs from them by at most one, finishing the proof.
\end{proof}

\begin{observation}\label{regfor} If $n\ge 3$, then $\ex(n,M_2,S_4)=n(n-3)/2$.
\end{observation}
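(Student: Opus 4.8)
The plan is to exploit the fact that $S_4 = K_{1,3}$ is the star whose center has degree $3$, so being $S_4$-free is the same as having maximum degree at most $2$. Hence any $S_4$-free graph $G$ on $n$ vertices is a disjoint union of paths and cycles, and in particular its edge count $e := |E(G)|$ satisfies $e \le n$. The whole problem will reduce to an optimization over two integer parameters of such graphs.

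First I would count copies of $M_2$ by complementary counting. Two distinct edges fail to form an $M_2$ exactly when they share a (necessarily unique) vertex, and the number of such adjacent pairs is $\sum_v \binom{d(v)}{2}$, so
\[
\cN(M_2, G) = \binom{e}{2} - \sum_{v \in V(G)} \binom{d(v)}{2}.
\]
Since $G$ has maximum degree at most $2$, the summand $\binom{d(v)}{2}$ equals $1$ when $d(v)=2$ and $0$ otherwise; writing $t$ for the number of degree-$2$ vertices gives $\cN(M_2,G) = \binom{e}{2} - t$. Thus the task becomes: maximize $\binom{e}{2} - t$ over the pairs $(e,t)$ realizable by a maximum-degree-$2$ graph on $n$ vertices.

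Next I would record the constraints tying $e$ to $t$. Counting degrees gives $2e = (\#\text{deg-}1) + 2t$, and since the numbers of degree-$1$ and degree-$2$ vertices sum to at most $n$, this forces $t \ge 2e - n$ (alongside the trivial $t \ge 0$ and $e \le n$). The optimization then splits into two regimes. When $e \le n/2$ one may take $t=0$ (a matching), but then $\cN(M_2,G) \le \binom{e}{2} \le \binom{\lfloor n/2\rfloor}{2}$, of order $n^2/8$. When $e > n/2$ the bound $t \ge 2e-n$ gives
\[
\cN(M_2, G) \le \binom{e}{2} - (2e - n) = \frac{e^2 - 5e}{2} + n,
\]
a quadratic in $e$ that is \emph{increasing} for $e \ge 3$, hence maximized at $e = n$, where it equals $\frac{n(n-3)}{2}$.

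Finally, the matching lower bound comes from any $2$-regular graph on $n$ vertices, for instance $C_n$ (which exists precisely because $n \ge 3$): it has $e=n$ and $t=n$, so $\cN(M_2,C_n) = \binom{n}{2} - n = \frac{n(n-3)}{2}$. I do not anticipate a genuine obstacle; the only real content is the comparison between the two regimes, namely recognizing that packing in as many edges as possible (a union of cycles) beats the naive maximum matching. The one point requiring a routine check is that the extremal value $\frac{n(n-3)}{2}$ indeed dominates $\binom{\lfloor n/2\rfloor}{2}$ for every $n \ge 3$, which holds since the former is asymptotically $n^2/2$ against $n^2/8$ and the finitely many small cases (with equality at $n=3$) are immediate.
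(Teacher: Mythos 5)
Your proof is correct, and it takes a somewhat different route than the paper's. Both arguments start from the same two facts (an $S_4$-free graph has maximum degree at most $2$, hence at most $n$ edges, and the lower bound comes from a $2$-regular graph), but the upper bounds diverge. The paper argues by a dichotomy on the edge count $e$: if $e=n$ the graph is $2$-regular and the count is exactly $n(n-3)/2$; if $e\le n-1$, the crude bound ``pick an edge, then an edge disjoint from it'' gives $\cN(M_2,G)\le (n-1)(n-2)/2=n(n-3)/2+1$, and the spurious $+1$ is eliminated by an equality analysis: equality would force all $n-1$ edges to be pairwise independent, i.e.\ $G=M_{n-1}$, which needs $2(n-1)>n$ vertices. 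You instead establish the exact identity $\cN(M_2,G)=\binom{e}{2}-\sum_{v}\binom{d(v)}{2}=\binom{e}{2}-t$, where $t$ is the number of degree-$2$ vertices, record the degree-sum constraint $t\ge 2e-n$, and maximize the resulting quadratic over the feasible pairs $(e,t)$. Your version is slightly longer but more systematic: it needs no equality-case bookkeeping, it shows how the maximum depends on the whole feasible region (so it also locates the near-extremal configurations), and it would adapt verbatim to counting $M_2$ under any bounded-degree hypothesis. The paper's version is shorter, at the price of the ad hoc step ruling out the slack of $1$. The only point in your writeup that deserves to be spelled out, as you note, is the comparison of the two regimes, $\binom{\lfloor n/2\rfloor}{2}\le n(n-3)/2$, which indeed holds for all $n\ge 3$ with equality only at $n=3$.
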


\begin{proof}
The lower bound is given by any 2-regular graph, as we can pick an edge, and it has $n-3$ edges independent from it. We count every copy of $M_2$ twice this way.

For the upper bound, observe that an $S_4$-free graph $G$ has at most $n$ edges, and if it has $n$ edges, then it is 2-regular. If $G$ has at most $n-1$ edges, then we can pick an edge at most $n-1$ ways, and another edge at most $n-2$ ways. This gives the upper bound $(n-1)(n-2)/2$, which is one larger than what we claimed. Thus we obtain the desired bound unless above we have equality everywhere, in particular $G$ has $n-1$ edges, and each is independent from all the $n-2$ other edges. But then $G=M_{n-1}$, thus has more than $n$ vertices, a contradiction. 
\end{proof}

\begin{proposition}\label{tegy}
Let $F$ be obtained from $K_r$ by adding a new vertex and connecting it to one of the vertices of the $K_r$. Let $H\neq K_r$ be a connected graph and $n$ be large enough. Then $\ex(n,H,F)=\ex(n,H,K_r)$.
On the other hand, we have $\ex(n,K_r,F)=\cN(K_r,D(r,n))=\lfloor n/r\rfloor$.
\end{proposition}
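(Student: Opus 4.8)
The plan is to first pin down the structure of $F$-free graphs, then reduce the first statement to a single inequality about how fast $\ex(\cdot,H,K_r)$ grows, and finally read off the second statement directly from the structure.

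\emph{Structure of $F$-free graphs.} Since $F$ is $K_r$ with one pendant edge, a graph $G$ contains $F$ as soon as it contains a copy of $K_r$ one of whose vertices has a neighbour outside that clique. Hence in an $F$-free $G$ every $K_r$ is ``closed off'': the $r$ vertices of any $K_r$ form a connected component isomorphic to $K_r$. I would record this as: every $F$-free graph $G$ on $n$ vertices is the disjoint union of some number $t\ge 0$ of isolated copies of $K_r$ together with a $K_r$-free graph $G'$ on the remaining $n-tr$ vertices. The only points to check are that $F$-freeness really forbids an outside neighbour (so each such $K_r$ is a whole component) and that the leftover $G'$ is genuinely $K_r$-free (a $K_r$ inside $G'$ would be an isolated $K_r$ of $G$, already removed).

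\emph{First statement, reduction.} Write $c=\cN(H,K_r)$ and $m(s)=\ex(s,H,K_r)$. Because $H$ is connected, each copy of $H$ lies in one component, so the decomposition gives $\cN(H,G)=tc+\cN(H,G')\le tc+m(n-tr)$, and conversely $t$ disjoint $K_r$'s together with an extremal $K_r$-free graph realise this value; hence
\[
\ex(n,H,F)=\max_{0\le t\le \lfloor n/r\rfloor}\bigl(tc+m(n-tr)\bigr).
\]
The $t=0$ term equals $m(n)=\ex(n,H,K_r)$, so the statement is equivalent to the marginal inequality $m(n)-m(n-tr)\ge tc$ for every $t\ge 1$. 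If $c=0$ (which happens exactly when $H\not\subseteq K_r$) this is immediate from monotonicity of $m$, so assume $c>0$. Then $H\subseteq K_r$; since $H$ is connected and $H\ne K_r$ it has $h:=|V(H)|\le r$ vertices, the case $h=1$ being trivial. A graph on at most $r$ vertices other than $K_r$ is $(r-1)$-colourable, so $\chi(H)\le r-1$; therefore $H\subseteq T_{r-1}(n)$ for large $n$ and $m(n)\ge \cN(H,T_{r-1}(n))=\Theta(n^h)$ with $h\ge 2$.

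\emph{Main step (the obstacle).} Establishing $m(n)-m(n-tr)\ge tc$ for all $t$ is where the real work lies, and I would do it with two complementary estimates. For $tr$ above a constant I use super-additivity: a disjoint union of extremal graphs shows $m(a+b)\ge m(a)+m(b)$, so $m(n)-m(n-tr)\ge m(tr)\ge\Theta\bigl((tr)^h\bigr)$, and since $h\ge 2$ this exceeds $tc$ once $tr$ is larger than a constant (uniformly in $t$). For the complementary regime $n-tr=\Omega(n)$ I use a cloning construction: take an extremal $K_r$-free graph $G'$ on $n-tr$ vertices and a vertex $v$ lying in the most copies of $H$, so $v$ lies in at least $h\,m(n-tr)/(n-tr)=\Theta\bigl((n-tr)^{h-1}\bigr)$ of them; then adjoin $tr$ new vertices, pairwise non-adjacent, each with the same neighbourhood as $v$. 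This stays $K_r$-free (a $K_r$ through a clone would force a $K_{r-1}$ inside $N(v)$, hence a $K_r$ through $v$ in $G'$), and each clone produces a distinct fresh copy of $H$ for every copy of $H$ through $v$, so $m(n)\ge m(n-tr)+tr\cdot\Theta\bigl((n-tr)^{h-1}\bigr)\ge m(n-tr)+tc$ for large $n$. The two regimes overlap and together cover every $1\le t\le\lfloor n/r\rfloor$. The main difficulty is precisely this marginal bound: super-additivity alone fails for small $t$ (indeed $m(r)$ can be strictly smaller than $c$, as already for $H=K_{r-1}$), so one genuinely needs the cloning argument, whose point is that reusing the dense structure of a large extremal $K_r$-free graph creates $\Theta(n^{h-1})$ copies of $H$ per new vertex, dwarfing the $O(1)$ copies that fit inside an isolated clique.

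\emph{Second statement.} For $H=K_r$ the structure theorem finishes immediately: in $G=tK_r\sqcup G'$ the $K_r$-free part $G'$ contributes no copy of $K_r$, while each isolated clique contributes exactly one, so $\cN(K_r,G)=t\le\lfloor n/r\rfloor$. Equality is attained by $D(r,n)$, which consists of $\lfloor n/r\rfloor$ disjoint copies of $K_r$ and a clique on the fewer than $r$ leftover vertices; this graph is $F$-free and has $\cN(K_r,D(r,n))=\lfloor n/r\rfloor$, giving $\ex(n,K_r,F)=\cN(K_r,D(r,n))=\lfloor n/r\rfloor$.
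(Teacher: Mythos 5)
Your proof is correct, but it is organized quite differently from the paper's. Both arguments rest on the same two pillars: in an $F$-free graph every copy of $K_r$ is an isolated component, and a vertex lying in $\Omega(n^{|V(H)|-1})$ copies of $H$ can be cloned, so each clone gains far more than the constant $c=\cN(H,K_r)$ copies that an isolated clique contributes. The paper, however, never writes down your explicit formula $\ex(n,H,F)=\max_t\bigl(tc+m(n-tr)\bigr)$; instead it takes an \emph{extremal} $F$-free graph $G$ containing a $K_r$, replaces that single clique by $r$ twins of a well-chosen vertex $v$ of the remaining graph, and derives a contradiction with extremality. Since that cloning happens inside an $F$-free (not necessarily $K_r$-free) graph, the paper must check that twinning preserves $F$-freeness, which requires the extra observation that $v$ lies in no copy of $K_r$ (else its component would be an isolated clique and $v$ would lie in at most $c$ copies of $H$); your version clones inside a $K_r$-free extremal graph, where preservation of $K_r$-freeness is immediate. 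The price of your cleaner reduction is that you must defeat every number $t$ of cliques at once, which forces your second regime (super-additivity for $tr$ above a constant) — an ingredient the paper's one-clique-at-a-time contradiction avoids entirely. Your remark that super-additivity alone fails (e.g. $m(r)=2<r=c$ for $H=K_{r-1}$, $r\ge 3$) correctly pinpoints why cloning is indispensable in either formulation. Finally, the paper's case split $|V(H)|>r$ versus $|V(H)|\le r$ (the former handled by deleting clique edges) matches your split $c=0$ versus $c>0$, with yours dispatched slightly more simply by monotonicity of $m$.
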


\begin{proof}
Note first that $\ex(n,H,F)\ge \ex(n,H,K_r)$, as $K_r$ is a subgraph of $F$.

 Let $G$ be an $F$-free graph on $n$ vertices. If there is a $K_r$ in $G$, no other vertex is connected to its vertices. This shows the statement about $\ex(n,K_r,F)$.
 
 Assume first that $H$ has more than $r$ vertices. If there is a $K_r$ in $G$, then its edges cannot be in any copy of $H$. Thus, we can delete all the edges of every $K_r$ from $G$ to obtain a $K_r$-free graph $G'$ with $\cN(H,G')=\cN(H,G)$. As $\cN(H,G')\le \ex(n,H,K_r)$, this finishes the proof.

Assume now $H\neq K_r$ has $p\le r$ vertices, then it has chromatic number at most $r-1$. Therefore, $\cN(H,T_{r-1}(n))=\Omega(n^p)$, hence $\ex(n,H,F)=\Omega(n^p)$. If $p=1$, then the statement is trivial, hence we assume $p>1$ from now on.

Let $G$ be an $F$-free graph on $n$ vertices and assume again that there is a $K_r$ in $G$. Again, no other vertex is connected to its vertices. Let $n$ be large enough in this case. Let $G'$ be the graph we obtain by deleting a copy of $K_r$. We can assume $\cN(H,G')=\ex(n-r,H,F)$, otherwise we could replace $G'$ with an extremal graph to obtain more than $\cN(H,G)$ copies of $H$ on $n$ vertices . 
We have $\cN(H,G)=\cN(H,G')+c$ for a constant $c=\cN(H,K_r)$. 

As $\ex(n,H,F)$ is super-linear and $n-r$ is large enough, there is a vertex $v$ of $G'$ appearing in more than $c$ copies of $H$. Then $v$ is not in any copy of $K_r$ (as in that case its component would be a $K_r$ with only $c$ copies of $H$). Let us add $r$ twins of $v$ to $G'$, i.e. $r$ new vertices connected to exactly the same vertices as $v$. We claim that the resulting graph $G_0$ is $F$-free. Indeed, assume there is an $F$ in $G_0$, and consider the $K_r$ in it, which we denote by $K$. If $K$ does not contain any new vertices, then the additional leaf is a new vertex, but it could be replaced by $v$ to find a copy of $F$ in $G$, a contradiction (recall that $v$ cannot be in $K$). If $K$ contains a new vertex $v'$, then it contains only one new vertex and does not contain $v$, as the new vertices with $v$ form an independent set. But then we could replace $v'$ with $v$ in $K$, to obtain a $K_r$ containing $v$ in $G'$, a contradiction.

Observe that every new vertex $u$ is in more than $c$ copies of $H$ that contains only vertices from $V(G')\setminus\{v\}$ besides $u$. Therefore, $\cN(H,G_0)\ge cr+\cN(H,G')>\cN(H,G)$, a contradiction.
\end{proof}

Using that $P_3$, $P_4$ and $C_4$ are 3-Tur\'an-good by Corollary \ref{gpl2}, we have the following.

\begin{corollary}\label{tri1}
$P_3$, $P_4$ and $C_4$ are $T_1$-Tur\'an-good.
\end{corollary}


\begin{proposition}\label{p4k4}
$P_4$ is $4$-Tur\'an-good.
\end{proposition}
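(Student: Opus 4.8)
The plan is to reduce to the case where the extremal graph is complete multipartite and then to optimize. Let $G$ be an $n$-vertex $K_4$-free graph maximizing $\cN(P_4,G)$. I would run a Zykov-type symmetrization: given two non-adjacent vertices $u,v$ with $N(u)\neq N(v)$, replace $G$ by the graph in which both $u$ and $v$ are joined exactly to $N(u)$, or the one in which both are joined exactly to $N(v)$, whichever has at least as many copies of $P_4$. Cloning a vertex onto a non-neighbour cannot create a $K_4$: a new $K_4$ can contain at most one of the two non-adjacent twins, and if it uses the re-wired twin then replacing that twin by the other produces a $K_4$ already present in $G$. Hence both candidate graphs stay $K_4$-free, and iterating (choosing throughout an extremal graph with as many twin-classes as possible) forces non-adjacency to become an equivalence relation, so $G$ becomes complete $r$-partite; since it is $K_4$-free, $r\le 3$.

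To justify that the symmetrization never loses copies, I would write $\cN(P_4,G)=C_0+g(N(u))+g(N(v))+h(N(u),N(v))$, where $C_0$ counts copies avoiding both $u,v$, the term $g(\cdot)$ counts copies through exactly one of them (the same function for each, by symmetry), and $h(\cdot,\cdot)$ counts copies through both. The two candidates then have counts $C_0+2g(N(u))+h(N(u),N(u))$ and $C_0+2g(N(v))+h(N(v),N(v))$, so their average is at least $\cN(P_4,G)$ exactly when $h(A,A)+h(B,B)\ge 2h(A,B)$ for $A=N(u),B=N(v)$. Enumerating the three ways a non-adjacent pair can sit inside $P_4$ gives
\[ h(A,B)=|A\cap B|\,(|A|+|B|-2)+f(A,B), \]
where $f(A,B)$ is the number of ordered pairs $(x,y)$ with $x\in A$, $y\in B$ and $xy\in E(G)$. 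The desired supermodularity reduces to an elementary inequality that holds once $|A|,|B|\ge 2$, where it is crucial that $N(u)$ and $N(v)$ induce triangle-free graphs (as $G$ is $K_4$-free), since otherwise a clique neighbourhood could inflate $f$. The main obstacle is exactly here: the inequality genuinely fails when a symmetrized vertex has degree $1$, so I must first show the extremal $G$ has large minimum degree. This is standard — a vertex of degree $d$ lies in only $O(dn^2)$ copies of $P_4$, whereas in the optimum a typical vertex lies in $\Theta(n^3)$ of them, so any vertex with $d=o(n)$ could be profitably re-cloned; thus for $n$ large every degree is $\Omega(n)$ and the degenerate case disappears.

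It then remains to maximize $\cN(P_4,K_{a,b,c})$ over $a+b+c=n$, where empty parts are allowed (recovering the complete bipartite case). This count is a symmetric polynomial in $a,b,c$, and I would verify by a short computation that it is Schur-concave, so the maximum is attained at the balanced partition with parts of size $\lfloor n/3\rfloor$ or $\lceil n/3\rceil$, i.e.\ at $T_3(n)$. Comparing leading coefficients already excludes the boundary: the balanced $3$-partite graph has $\sim \tfrac{4}{27}n^4$ copies of $P_4$, whereas every complete bipartite graph has at most $\sim \tfrac{1}{16}n^4$. Combining the reduction with this optimization yields $\ex(n,P_4,K_4)=\cN(P_4,T_3(n))$ for $n$ large enough, i.e.\ $P_4$ is $4$-Tur\'an-good.
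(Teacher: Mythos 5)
Your proposal is correct, but it takes a genuinely different route from the paper. The paper's proof is a short double count: every copy of $P_4$ is determined by its pair of end-edges, which form an $M_2$; classifying the five possible induced subgraphs spanned by two independent edges ($B_2$, $C_4$, $T_1$, $P_4$, $M_2$) yields $\cN(P_4,G)\le 2\cN(M_2,G)+2\cN(B_2,G)$ with equality-structure in $T_3(n)$, and then it quotes two results already established in the paper, $\ex(n,M_2,K_4)=\cN(M_2,T_3(n))$ (Theorem \ref{matc}) and $\ex(n,B_2,K_4)=\cN(B_2,T_3(n))$ (Corollary \ref{gpl2}). You instead push Zykov symmetrization through for $H=P_4$, which is \emph{not} complete multipartite, so Proposition \ref{gpl2.5} cannot be invoked as a black box; this is the real content of your argument, and your key inequality does hold: with $h(A,B)=|A\cap B|\,(|A|+|B|-2)+f(A,B)$, a failure of $h(A,A)+h(B,B)\ge 2h(A,B)$ forces $t_At_B>(|A|-1)t_A+(|B|-1)t_B$ where $t_A=|A\setminus B|$, $t_B=|B\setminus A|$, which is impossible unless $|A|=|B|=1$ -- so the degree-$1$ configuration you isolate is the \emph{only} obstruction (incidentally, $K_4$-freeness plays no role in this inequality, contrary to your remark that triangle-freeness of the neighbourhoods is crucial). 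Your minimum-degree reduction by deleting a low-degree vertex and cloning a vertex in many copies is sound; one refinement worth making explicit is that every graph produced during symmetrization is still extremal (since the average of the two candidate counts is at least the original and neither can exceed the extremal value, both directions preserve the count exactly), which simultaneously keeps the minimum degree linear throughout the process and lets you always symmetrize the smaller twin-class into the larger, so that the number of twin pairs strictly increases and the process terminates. The final optimization also checks out: $\cN(P_4,K_{a,b,c})=a(a-1)b(b-1)+a(a-1)c(c-1)+b(b-1)c(c-1)+3abc(n-3)$, and moving one vertex from a part of size $a$ to a part of size $b\le a-2$ changes this count by $(a-b-1)\bigl(2(a-1)b+3c(n-3)-2c(c-1)\bigr)>0$ for large $n$, so the balanced partition is optimal. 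In trade-off terms: the paper's proof is a few lines but leans on its earlier machinery, while yours is longer and every step needs care, but it is self-contained and shows that symmetrization can be salvaged for a non-complete-multipartite $H$ once a minimum-degree reduction removes the degenerate case.
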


\begin{proof} Let $G$ be a $K_4$-free graph on $n$ vertices. We count the copies of $P_4$ by picking the first and last edge, which are two independent edges. There are at most $\ex(n,M_2,K_4)$ ways to do this, which is $\cN(M_2,T_3(n))$ by Theorem \ref{matc}. 

After picking these two edges, there are five possibilities for the subgraph of $G$ induced on the four vertices of the two edges picked. Either there is a $B_2$ on the four vertices, or a $C_4$, or a $T_1$, or a $P_4$, or an $M_2$. A $B_2$ contains 6 copies of $P_4$ and this way it is counted twice. A $C_4$ contains 4 copies, and is counted twice. A $T_1$ contains 2 copies and is counted once. A $P_4$ contains one copy and is counted once, while an $M_2$ contains no copy and is counted once.

Let $\cN^*(H,F)$ denote the number of induced copies of $H$ in $F$, and let $a=\cN^*(B_2,G)$, $b=\cN^*(C_4,G)$, $c=\cN^*(T_1,G)$, $d=\cN^*(P_4,G)$ and $e=\cN^*(M_2,G)$. Then by the above argument we have $\cN(M_2,G)=2a+2b+c+d+e$, and $\cN(P_4,G)=6a+4b+2c+d$, which implies $\cN(P_4,G)\le 2\cN(M_2,G)+2a$.
Similar equations hold for $T_3(n)$, but no $T_1$, $P_4$ or $M_2$ are induced there, so we have $\cN(M_2,T_3(n))=2\cN^*(B_2,T_3(n))+2\cN^*(C_4,T_3(n))$ and $\cN(P_4,T_3(n))=6\cN^*(B_2,T_3(n))+4\cN^*(C_4,T_3(n))$

Observe that every $B_2$ is induced in a $K_4$-free graph, thus $a=\cN^*(B_2,G)=\cN(B_2,G)\le \ex(n,B_2,K_4)=\cN(B_2,T_3(n))$, where the last equality follows from Corollary \ref{gpl2}. We have $\cN(P_4,G)\le 2\cN(M_2,G)+2a=2\cN(M_2,G)+2\cN(B_2,G)\le 2\cN(M_2,T_3(n))+2\cN(B_2,T_3(n))=6\cN^*(B_2,T_3(n))+4\cN^*(C_4,T_3(n))=\cN(P_4,T_3(n))$.
\end{proof}

\section{Progressive induction}

The progressive induction was introduced by Simonovits \cite{sim}. It is a method to prove statements that hold only for $n$ large enough. In case of ordinary induction, one usually proves the base case easily, as it is on a very small graph, and the induction step is more complicated. However, in case the statement only holds for large $n$, even if the induction step can be proved, the base case might be more complicated. 

This is where progressive induction can be used. Let us describe it informally first. Assume we want to prove that an integer valued quantity $\alpha(G)$ on $n$-vertex graphs takes its maximum on a graph $G_n$ (or on a family of graphs). Ordinary induction assumes that this statement holds for some $n'$, and for larger $n$ it proves that $\alpha$ increases by at most $\alpha(G_n)-\alpha(G_{n'})$. Progressive induction does not have the assumption. In this case one has to prove that $\alpha$ increases by strictly less than $\alpha(G_n)-\alpha(G_{n'})$ (unless the $n$-vertex graph is $G_n$). This means that for small values of $n$, $\alpha(G)$ may be larger on an $n$-vertex graph than $\alpha(G_n)$, but this surplus starts decreasing after a while, and eventually vanishes.

Now we state the key lemma more formally. The actual method works for more than just graphs, but 
for simplicity, we state the lemma only for graphs.

\begin{lemma}[Simonovits \cite{sim}]\label{progi}
Let $\cA\supset \cB$ be families of graphs. Let $f$ be a function on graphs in $\cA$ such that $f(G)$ is a non-negative integer, and if $G$ is in $\cB$, then $f(G)=0$. Assume there is an $n_0$ such that if $n>n_0$ and $G\in\cA$ has $n$ vertices, then either $G\in\cB$, or there exist an $n'$ and a $G'\in\cA$ such that $n/2<n'<n$, $G'$ has $n'$ vertices and $f(G)<f(G')$. Then there exists $n_1$ such that every graph in $\cA$ on more than $n_1$ vertices is in $\cB$.
\end{lemma}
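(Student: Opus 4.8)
The plan is to fix a putative counterexample — a graph $G \in \cA$ with $G \notin \cB$ on $n > n_0$ vertices — and to \emph{iterate} the hypothesis, building a chain of graphs in $\cA$ whose sizes decrease but whose $f$-values strictly increase. Concretely I would set $G_0 = G$, write $m_i = |V(G_i)|$ so that $m_0 = n$, and apply the hypothesis repeatedly: as long as $m_i > n_0$, since (as I check below) $G_i \notin \cB$, the hypothesis produces $G_{i+1} \in \cA$ with $m_i/2 < m_{i+1} < m_i$ and $f(G_i) < f(G_{i+1})$.

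The first key observation is that the chain never falls into $\cB$. Indeed, if some $G_{i+1} \in \cB$ then $f(G_{i+1}) = 0$, and combined with the strict inequality $f(G_i) < f(G_{i+1})$ this would force $f(G_i) < 0$, contradicting the non-negativity of $f$. Hence $G_i \notin \cB$ for every $i$, and the only way the iteration can stop is when the vertex count first drops to $m_k \le n_0$; this must occur after finitely many steps, because the $m_i$ strictly decrease. Along the chain we have accumulated strictly increasing non-negative integers $f(G_0) < f(G_1) < \cdots < f(G_k)$, so that $f(G_k) \ge k$.

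Next I would extract two competing bounds on the length $k$. On one hand, the ``less than half'' clause $m_{i+1} > m_i/2$ gives $m_k > n/2^{k}$, which together with $m_k \le n_0$ yields $k > \log_2(n/n_0)$. On the other hand, $G_k \in \cA$ has at most $n_0$ vertices, and since there are only finitely many graphs (up to isomorphism) on at most $n_0$ vertices, the constant $C := \max\{ f(G') : G' \in \cA,\ |V(G')| \le n_0 \}$ is finite; thus $k \le f(G_k) \le C$. Combining the two bounds gives $\log_2(n/n_0) < C$, that is, $n < n_0\, 2^{C}$. Setting $n_1 := n_0\, 2^{C}$, any $G \in \cA$ on more than $n_1$ vertices cannot have $G \notin \cB$, for otherwise the displayed bound would force $n < n_1$; hence every such graph lies in $\cB$, as required. (If $\cA$ happens to contain no graph on at most $n_0$ vertices, the iteration could not terminate, so no counterexample exists and one may take $n_1 = n_0$.)

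The crux of the argument — and the one point deserving care — is the balancing of these two effects, which is precisely the content of ``progressive induction'': the size constraint $m_{i+1} > m_i/2$ guarantees that descending to scale $n_0$ requires \emph{at least} logarithmically many steps, while the strict integer increase of $f$ against its fixed ceiling $C$ at the bottom of the chain caps the number of steps at \emph{most} $C$. It is the interplay of a logarithmic lower bound and a constant upper bound on $k$ that pins down $n$, and everything else (the non-negativity check keeping the chain out of $\cB$, and the finiteness of the small-graph maximum $C$) is there to make this interplay rigorous.
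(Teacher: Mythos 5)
Your proof is correct. Note first that the paper does not actually prove this lemma---it is stated as a quotation from Simonovits \cite{sim} and used as a black box---so there is no internal proof to compare against; your argument must stand on its own, and it does. The chain construction, the observation that the strict increase of $f$ keeps the chain out of $\cB$ (since landing in $\cB$ would force a negative value of $f$ one step earlier), the lower bound $k > \log_2(n/n_0)$ from the halving clause, and the upper bound $k \le C$ from the integrality of $f$ and its boundedness on graphs of order at most $n_0$, together give $n < n_0 2^{C}$; this interplay of a logarithmic lower bound against a constant upper bound is exactly the mechanism of progressive induction. The one point you should make explicit is why $C = \max\{ f(G') : G' \in \cA,\ |V(G')| \le n_0 \}$ is finite: this needs $f$ to take only finitely many values on graphs with at most $n_0$ vertices, which holds under the (implicit, and satisfied in every application in the paper) convention that $f$ is isomorphism-invariant, so that the maximum ranges over finitely many isomorphism types. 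Your treatment of the degenerate case in which $\cA$ contains no graph on at most $n_0$ vertices is also sound: the forced termination of the strictly descending chain at order at most $n_0$ shows that no counterexample above $n_0$ can exist at all, so one may take $n_1 = n_0$ there.
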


We remark that typically here we want to maximize $\alpha$ on $F$-free graphs, and we conjecture that the extremal graphs belong to a family $\cB_0$. Then $\cA$ is the family of $F$-free graphs that maximize $\alpha$, $\cB=\cA\cap \cB_0$, and $f(G)=\alpha(G)-\alpha(H)$, where $H$ maximizes $\alpha$ in $\cB_0$.

We also use a simple result of Alon and Shikhelman \cite{ALS2016} and the removal lemma.

\begin{proposition}[ Alon and Shikhelman \cite{ALS2016}]\label{as}
We have $\ex(n,H,F)=\Omega(n^{|V(H)|})$ if and only if $F$ is not a subgraph of a blow-up of $H$.
\end{proposition}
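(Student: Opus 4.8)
The plan is to prove the two directions of the equivalence separately, writing $h=|V(H)|$. Throughout I read ``$F$ is a subgraph of a blow-up of $H$'' to mean that for some positive part sizes the blow-up $H[t_1,\dots,t_h]$ — replace each vertex $v_i$ of $H$ by an independent set $V_i$ of size $t_i$, and put a complete bipartite graph between $V_i$ and $V_j$ exactly when $v_iv_j\in E(H)$ — contains $F$ as a subgraph. For the direction ``$F$ is not a subgraph of any blow-up $\Rightarrow \ex(n,H,F)=\Omega(n^h)$'', the witnessing construction is a balanced blow-up of $H$ itself. I would take $G_0=H[t_1,\dots,t_h]$ with each $t_i\in\{\lfloor n/h\rfloor,\lceil n/h\rceil\}$ chosen so that $\sum_i t_i=n$. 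Since $G_0$ is a blow-up of $H$ and $F$ is by hypothesis not a subgraph of any blow-up of $H$, the graph $G_0$ is $F$-free. To count copies, observe that selecting one vertex from each part $V_i$ yields a set whose induced subgraph is exactly $H$ (the chosen representatives of $V_i,V_j$ are adjacent iff $v_iv_j\in E(H)$); distinct transversals give distinct vertex sets, hence $\cN(H,G_0)\ge\lfloor n/h\rfloor^{h}=\Omega(n^h)$. This direction has no real obstacle.

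For the converse, ``$F$ a subgraph of some blow-up $\Rightarrow \ex(n,H,F)=o(n^h)$'', I would first simplify the hypothesis: if $F\subseteq H[t_1,\dots,t_h]$ then $F\subseteq H[t]$ for the balanced blow-up with $t=\max_i t_i$ (in particular $t=|V(F)|$ suffices), since $H[t_1,\dots,t_h]$ is a subgraph of $H[t]$. The key reduction is the elementary observation that an $F$-free graph is automatically $H[t]$-free: any copy of $H[t]$ in a graph $G$ would contain a copy of $F$, contradicting $F$-freeness. Hence it suffices to prove the clean statement $\ex(n,H,H[t])=o(n^h)$, that is, a graph with $\Omega(n^h)$ copies of $H$ must contain the blow-up $H[t]$ once $n$ is large.

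This last forcing statement is the heart of the matter, and it is where I would invoke the Szemer\'edi regularity lemma together with the counting (embedding) lemma; this packages as the standard consequence of the removal lemma listed among our tools. Concretely, suppose for contradiction that for some $c>0$ there are arbitrarily large $n$ with an $F$-free graph $G$ on $n$ vertices satisfying $\cN(H,G)\ge cn^h$. I would apply regularity to $G$ to obtain an equitable partition into a bounded number of parts with all but a negligible fraction of pairs $\varepsilon$-regular, and then run a cleaning argument: copies of $H$ using two vertices in one part, or using an irregular or low-density pair, number only $o(n^h)$ and may be discarded. The surviving $\ge(c-o(1))n^h$ copies concentrate on some choice of $h$ parts $W_1,\dots,W_h$ (one per vertex of $H$) for which each pair $(W_i,W_j)$ with $v_iv_j\in E(H)$ is $\varepsilon$-regular with density bounded below by a constant. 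The embedding lemma applied to this dense regular $h$-tuple then produces a copy of $H[t]$ (for $\varepsilon$ small in terms of $t$, $h$, and the density), hence a copy of $F$ — a contradiction. This shows that for every $c>0$ and all large $n$ no $F$-free graph has $cn^h$ copies of $H$, i.e.\ $\ex(n,H,F)=o(n^h)$.

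The main obstacle is exactly this forcing lemma — the passage from a positive density of copies of $H$ to an actual blow-up $H[t]$. The construction direction and the ``$F$-free $\Rightarrow H[t]$-free'' reduction are elementary; the analytic content (regularity, the quantitative cleaning step showing degenerate copies are negligible, and the embedding lemma) is what powers the upper bound, and it is why the removal lemma, equivalently regularity, must be brought in at this point.
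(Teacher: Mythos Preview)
The paper does not actually prove this proposition; it is quoted as a known result of Alon and Shikhelman and used as a tool in Section~4. Your argument is correct and is essentially the standard proof: the balanced blow-up of $H$ gives the $\Omega(n^h)$ lower bound when $F$ embeds in no blow-up, and regularity plus the embedding/counting lemma forces a copy of $H[t]$ (hence of $F$) once $\cN(H,G)\ge cn^h$, giving the $o(n^h)$ upper bound.
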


\begin{lemma}[Removal lemma] If a graph $G$ contains $o(n^{|V(H)|})$ copies of $H$, then there are $o(n^2)$ edges of $G$, such that deleting them makes the resulting graph $H$-free.

\end{lemma}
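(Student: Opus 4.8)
The plan is to deduce this from Szemer\'edi's regularity lemma together with a counting (embedding) lemma for regular pairs; this is the standard route to the removal lemma, going back to Ruzsa and Szemer\'edi for the triangle case. Write $h=|V(H)|$. I would first restate the claim in its quantitative contrapositive form: for every $\varepsilon>0$ there is a $\delta>0$ such that any $n$-vertex graph $G$ which cannot be made $H$-free by deleting at most $\varepsilon n^2$ edges must contain at least $\delta n^h$ copies of $H$. The $o$-notation version in the statement is equivalent to this, so it suffices to fix $\varepsilon>0$ and produce the corresponding $\delta$.

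First I would fix the auxiliary parameters in the correct order. Given $\varepsilon$, choose a density threshold $d=\varepsilon/4$, then a regularity parameter $\gamma$ small compared with $d$ and with $\varepsilon$, and apply the regularity lemma to $G$ to obtain an equitable partition $V(G)=V_1\cup\dots\cup V_k$ into $k$ parts (with $k$ bounded in terms of $\gamma$ alone) in which all but at most $\gamma k^2$ of the pairs are $\gamma$-regular. I would then \emph{clean} the graph by deleting three kinds of edges: those lying inside a single part $V_i$; those between pairs that are not $\gamma$-regular; and those between pairs of density less than $d$. A routine count bounds these three contributions by $n^2/(2k)$, $\gamma n^2$, and $dn^2/2$ respectively, so with $k$ large enough and $\gamma<\varepsilon/4$ the total number of deleted edges is at most $\varepsilon n^2$.

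The heart of the argument is to show that the cleaned graph $G'$ is $H$-free, for then the hypothesis (that $G$ cannot be made $H$-free by deleting at most $\varepsilon n^2$ edges) is contradicted. Suppose instead that $G'$ contains a copy of $H$. Since all within-part edges were removed, its $h$ vertices lie in $h$ distinct parts, and each edge of the copy runs between a pair that survived the cleaning, hence a $\gamma$-regular pair of density at least $d$. Thus $H$ embeds, one vertex per part, into the reduced structure on those $h$ parts. The counting lemma then guarantees that the number of copies of $H$ using exactly those parts, with every edge in a surviving regular dense pair, is at least $c\cdot(n/k)^h$ for a constant $c\ge(d-\gamma)^{|E(H)|}$ depending only on $d$, $\gamma$, $h$ and $k$, and not on $n$. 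Since $k$ is bounded independently of $n$, this is $\Omega(n^h)$, and choosing $\delta$ strictly smaller than this constant yields the desired contradiction and completes the proof.

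The main obstacle is the counting (embedding) lemma for $\gamma$-regular pairs: one must show that if $h$ parts are pairwise joined by $\gamma$-regular pairs of density at least $d$, then the number of partite copies of $H$ is bounded below by a positive constant times $\prod_i|V_i|$. I would prove this by embedding the vertices of $H$ one at a time, repeatedly invoking the defining property of $\gamma$-regularity to ensure that at each step all but a $\gamma$-fraction of the candidate vertices retain a dense neighbourhood into the parts not yet used; the only care needed is that at every stage the relevant candidate set stays larger than $\gamma|V_i|$, which is exactly what forces the choice $\gamma\ll d$ made above. The other delicate point, already flagged, is simply the order of quantifiers: $\varepsilon$ determines $d$, which together with $\gamma$ determines the counting-lemma constant $c$, and only then is $\delta<c$ chosen, with $n$ taken large enough that the regularity partition and the parts $V_i$ behave as described.
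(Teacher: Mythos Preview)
Your argument via Szemer\'edi's regularity lemma and the counting lemma is the standard proof of the graph removal lemma and is correct as outlined. However, the paper does not actually prove this statement: it merely records the removal lemma as a known tool (with no proof environment following it) and then applies it in Corollary~\ref{flenbtwo}. So there is nothing to compare against; your proposal simply supplies the classical proof that the paper chose to omit.
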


We also use a simple extension of Proposition \ref{gpl2.5}. Recall that it states that for a $K_k$-free graph $G$ on $n$ vertices and a complete multipartite graph $H$, there is a complete $(k-1)$-partite $G'$ on $n$ vertices with $\cN(H,G)\le \cN(H,G')$.

\begin{proposition}\label{propi} Let $G$ be a $K_k$-free graph on $n$ vertices, with an independent set $A$ of size $a$, and $H$ be a complete multipartite graph. Then there is a complete $(k-1)$-partite $G'$ on $n$ vertices with $\cN(H,G)\le \cN(H,G')$ such that one of the parts of $G'$ has size at least $a$.
\end{proposition}

\begin{proof}
The proof goes similarly the proof of Proposition \ref{gpl2.5} in \cite{gypl}. We apply the symmetrization process due to Zykov \cite{zykov}. Given two non-adjacent vertices $u$ and $v$ in $G$, we say that we symmetrize $u$ to $v$ if we delete all the edges incident to $u$, and then connect $u$ to the neighbors of $v$. It is well-known that the resulting graph is also $K_k$-free \cite{zykov}, and either symmetrizing $u$ to $v$, or symmetrizing $v$ to $u$ does not decrease the number of copies of $H$ \cite{gypl}, thus we can go through the pairs of non-adjacent vertices and symmetrize one to the other. It is also clear that if symmetrizing does not change anything, then non-adjacent vertices have the same neighborhood, thus $G$ is complete multipartite. To prove Proposition \ref{gpl2.5}, one only has to show that we arrive to such a situation after some symmetrizing, i.e. show that the process terminates after finitely many steps. This is done in \cite{gypl} by showing that either the number of copies of $H$, or the number of pairs with the exact same neighborhood increases.


We will show that by choosing carefully the pairs to symmetrize, we can make sure $A$ is always independent, which will finish the proof.
Let us apply the symmetrization first on pairs with both vertices in $A$. This way after finitely many steps we arrive to a graph $G_1$ where all the vertices in $A$ have the same neighborhood $B$. Then we apply symmetrization anywhere, with the additional condition, that we always symmetrize inside $A$, whenever two vertices of $A$ have different neighborhood. Indeed, it is possible that we symmetrize $u\in A$ to $v\in V\setminus A$, and this way after this step $u$ has a neighborhood that is different from the neighborhood of the other vertices in $A$. However, in this case $v$ is not connected to $u$, thus it is not connected to any vertex of $A$. This way we never add any edge inside $A$.
\end{proof}

\begin{corollary}\label{flenbtwo}
Let $\gamma<1$, $F$ be a 3-chromatic graph with a critical edge, $G$ be an $F$-free graph on $n$ vertices, with an independent set $A$ of size $a<\gamma n$, and $H$ be a complete bipartite graph. Then there is a complete bipartite graph $G'$ on $n$ vertices with $\cN(H,G)\le (1-o(1))\cN(H,G')$ such that one of the parts of $G'$ has size at least $a$.
\end{corollary}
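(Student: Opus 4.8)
The plan is to reduce to the triangle-free setting, where Proposition~\ref{propi} applies verbatim, and then to bound the error introduced by this reduction. The first step is to observe that $G$ is triangle-sparse. Since $F$ is $3$-chromatic, splitting $V(F)$ into its three colour classes realises $F$ as a subgraph of a complete $3$-partite graph, that is, of a blow-up of $K_3$. By Proposition~\ref{as} this forces $\ex(n,K_3,F)=o(n^3)$, so the $F$-free graph $G$ contains only $o(n^3)$ triangles. (Note that only the chromatic number of $F$, and not the colour-critical edge, is used at this point.)

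Next I would invoke the Removal Lemma with $H=K_3$: as $G$ has $o(n^3)$ copies of $K_3$, we may delete a set $D$ of $o(n^2)$ edges so that $G_1:=G-D$ is $K_3$-free. Deleting edges never creates an edge inside $A$, so $A$ remains an independent set of size $a$ in $G_1$. Applying Proposition~\ref{propi} to the $K_3$-free graph $G_1$ with $k=3$ and the complete bipartite graph $H$ then produces a complete bipartite graph $G'$ on $n$ vertices, one of whose parts has size at least $a$, with $\cN(H,G_1)\le \cN(H,G')$.

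It remains to pass from $G_1$ back to $G$. Every edge of $G$ lies in at most $O(n^{|V(H)|-2})$ copies of $H$, since such a copy is determined by that edge together with the remaining $|V(H)|-2$ vertices; hence deleting the $o(n^2)$ edges of $D$ destroys at most $o(n^2)\cdot O(n^{|V(H)|-2})=o(n^{|V(H)|})$ copies of $H$. Therefore
\[
\cN(H,G)\le \cN(H,G_1)+o(n^{|V(H)|})\le \cN(H,G')+o(n^{|V(H)|}).
\]
The main point requiring care is converting this additive error into the claimed multiplicative factor. Since $\cN(H,G')\ge \cN(H,G_1)\ge \cN(H,G)-o(n^{|V(H)|})$, in the relevant regime $\cN(H,G)=\Omega(n^{|V(H)|})$ (exactly the case that matters when $G$ is near-extremal, as $\cN(H,T_2(n))=\Theta(n^{|V(H)|})$) we obtain $\cN(H,G')=\Omega(n^{|V(H)|})$, so the $o(n^{|V(H)|})$ term is absorbed and $\cN(H,G)\le(1-o(1))\cN(H,G')$ after rewriting the vanishing factor, as claimed; the degenerate case $\cN(H,G)=o(n^{|V(H)|})$, if it occurs, is handled separately and does not arise in our application. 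I expect this absorption step — verifying that the removal-lemma loss stays below the order of $\cN(H,G')$ — to be the only delicate part, since everything else is a direct combination of Propositions~\ref{as} and~\ref{propi} with the Removal Lemma.
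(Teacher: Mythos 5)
Your reduction is exactly the paper's: Proposition \ref{as} gives $o(n^3)$ triangles, the removal lemma then yields a triangle-free graph at a cost of $o(n^2)$ edges and hence $o(n^{|V(H)|})$ copies of $H$, and Proposition \ref{propi} (with $k=3$) supplies the complete bipartite graph with a part of size at least $a$. All of those steps are correct, including your observation that the color-critical edge plays no role. The gap is precisely at the step you flagged as delicate, and it is not closed by your argument: you prove the inequality only under the extra hypothesis $\cN(H,G)=\Omega(n^{|V(H)|})$, and you dismiss the complementary case as ``handled separately'' and as not arising in applications, without proof of either claim. Neither is justified. The corollary is stated for an \emph{arbitrary} $F$-free graph with a large independent set, and it is also applied to one: in the proof of Theorem \ref{p3cc} it is invoked for the graph $G_0$ obtained by deleting all edges inside the neighborhood of a high-degree vertex, and nothing guarantees that $G_0$ contains $\Omega(n^{|V(H)|})$ copies of $H$. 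In that degenerate case your chain of inequalities gives no lower bound on $\cN(H,G')$ at all: the graph produced by Proposition \ref{propi} is only guaranteed to have at least as many copies as the edge-deleted graph, which may be $o(n^{|V(H)|})$ or even $0$, so the removal-lemma loss of $o(n^{|V(H)|})$ copies cannot be absorbed into it.

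The paper closes this hole with one extra move that your proposal is missing: it takes $G'$ to be whichever of the graph produced by Proposition \ref{propi} and the fixed graph $K_{a,n-a}$ contains more copies of $H$. The second candidate does not depend on $G$, and since $a<\gamma n$ (and $a=\Omega(n)$ in the regime where the corollary is used) both of its parts have linear size, so $\cN(H,K_{a,n-a})=\Omega(n^{|V(H)|})$; hence $\cN(H,G')=\Omega(n^{|V(H)|})$ unconditionally, and the additive error is always absorbed, with no case distinction on $\cN(H,G)$. With that one line added your proof coincides with the paper's. (A minor further point: what the absorption actually yields is $\cN(H,G)\le(1+o(1))\cN(H,G')$; the factor $(1-o(1))$ in the statement is an inaccuracy of the paper itself, and it is the $(1+o(1))$ form that is used in the proof of Theorem \ref{p3cc}, so ``rewriting the vanishing factor'' cannot recover the literal statement.)
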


\begin{proof}
$G$ contains $o(n^{3})$ triangles by Proposition \ref{as}, thus we can delete $o(n^2)$ edges to delete all the triangles in $G$ by the removal lemma. This way we removed $o(n^{|V(H)|})$ copies of $H$. Let $G_0$ be the resulting graph. Now we can apply Proposition \ref{propi} to find a complete bipartite graph $G_1$ with at least $\cN(H,G_0)$ copies of $H$, and a part of size at least $a$. Let $G'$ be either $G_1$, or $K_{a,n-a}$, the one with more copies of $H$. Then $ \cN(H,G')=\Omega(n^{|V(H)|}$. Therefore, we have $\cN(H,G)\le \cN(H,G_0)+o(n^{|V(H)|})\le (1-o(1))\cN(H,G')$.
\end{proof}

\begin{lemma}\label{ccb2}
Let $H$ be a bipartite graph and $a_n<n/2$ be integers such that for every $n$ we have $a_n-a_{n-1}\le 1$. Let $G_n=K_{a_n,n-a_n}$ and assume that for every $t$ there is $n_t$ such that for $n>n_t$, $\ex(n,H,B_t)=\cN(H,G_n)$. Then for any 3-chromatic graph $F$ with a color-critical edge, if $n$ is large enough, we have $\ex(n,H,F)=\cN(H,G_n)$.
\end{lemma}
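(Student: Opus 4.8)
The plan is to prove the matching upper bound $\ex(n,H,F)\le \cN(H,G_n)$ by progressive induction (Lemma \ref{progi}), using the book hypothesis to pin down the optimal bipartite host and the color-critical edge of $F$ to force extremal graphs to be essentially bipartite. The lower bound is immediate: $G_n$ is bipartite, hence contains no copy of the $3$-chromatic graph $F$, so $\cN(H,G_n)\le \ex(n,H,F)$.

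First I would record the consequences of the two hypotheses. Since $F$ is $3$-chromatic it is a subgraph of a blow-up of $K_3$, so Proposition \ref{as} gives $\ex(n,K_3,F)=o(n^3)$; by the removal lemma an $F$-free graph becomes triangle-free after deleting $o(n^2)$ edges, and since completing a bipartition only increases $\cN(H,\cdot)$, the maximum of $\cN(H,\cdot)$ over bipartite graphs on $n$ vertices is attained by a complete bipartite graph (this is the content of Corollary \ref{flenbtwo}, extended from complete bipartite to general bipartite $H$ via removal and completion). Because every complete bipartite graph is $B_t$-free, the book hypothesis shows that among complete bipartite graphs on $n$ vertices the maximum of $\cN(H,\cdot)$ equals $\cN(H,G_n)$; thus $G_n$ is the optimal bipartite host, and with $a_n-a_{n-1}\le 1$ and $a_n<n/2$ the marginal $m_n:=\cN(H,G_n)-\cN(H,G_{n-1})$ equals the number of copies of $H$ through a single appropriately chosen vertex of $G_n$.

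The decisive structural input is an embedding observation from the color-critical edge. Writing $e=x_1x_2$ for the critical edge and $(P,Q)$ for a bipartition of the bipartite graph $F-e$ with $x_1,x_2\in P$, if a graph contains an edge $u_1u_2$ whose endpoints sit in a sufficiently dense complete-bipartite-like structure (a large common neighbourhood of high degree), then mapping $x_1\mapsto u_1$, $x_2\mapsto u_2$, the rest of $P$ into the $u_i$-side and $Q$ into the opposite side realizes every edge of $F$, so $F$ is a subgraph. Consequently, in an $F$-free graph every edge inside a part must have a low-degree endpoint; combined with the cleaning of the previous step this shows an extremal $F$-free graph differs from a complete bipartite graph only at small-degree vertices and only in a bounded way per vertex, which already yields $\ex(n,H,F)=(1+o(1))\cN(H,G_n)$.

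To upgrade this to the exact equality I would apply Lemma \ref{progi} with $\cA$ the $F$-free graphs $G$ attaining $\cN(H,G)=\ex(|V(G)|,H,F)$, with $\cB_0$ the complete bipartite graphs, $\cB=\cA\cap\cB_0$, and $f(G)=\cN(H,G)-\cN(H,G_{|V(G)|})$; then $f$ is a non-negative integer (as $G_{|V(G)|}$ is $F$-free) vanishing on $\cB$. For the induction step, given $G\in\cA$ on $n$ vertices with $G\notin\cB_0$, I would pick a vertex $v$ of smallest $H$-participation $p_v$ and let $G'$ be any extremal graph on $n-1$ vertices; then $\cN(H,G')\ge \cN(H,G)-p_v$ gives $f(G')\ge f(G)+m_n-p_v$, so with $n'=n-1>n/2$ it suffices to find such a $v$ with $p_v<m_n$. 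The main obstacle is exactly producing this strict deficit, i.e. converting the \emph{qualitative} deviation ``$G$ is not complete bipartite'' into the \emph{quantitative} gap $p_v<m_n$: the embedding observation confines every internal edge to a low-degree vertex, and the book hypothesis bounds the number of copies of $H$ that such internal edges (which together with their common neighbourhoods span books) can contribute, so that a missing cross-edge or an internal edge always leaves some vertex below the threshold $m_n$. The delicate point, where the smoothness $a_n-a_{n-1}\le 1$ is essential, is the borderline case of vertices whose degree is close to $a_n$, where the participation deficit is only of lower order and must be tracked carefully.
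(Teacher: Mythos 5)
Your overall framework (lower bound from the bipartiteness of $G_n$, progressive induction via Lemma \ref{progi} with $f(G)=\cN(H,G)-\cN(H,G_{|V(G)|})$) matches the paper's, and the asymptotic statement you claim is recoverable from the removal lemma plus the $B_t$-hypothesis (though note Corollary \ref{flenbtwo} is stated for complete bipartite $H$, while the lemma allows arbitrary bipartite $H$). But the heart of the proof is missing: you never actually produce, for an extremal $F$-free graph $G\notin\cB$, a vertex whose $H$-participation is strictly below the marginal $m_n$; you yourself flag this as ``the main obstacle'' and ``the delicate point''. The paper does not attempt this stability-type deficit argument for general $F$ at all. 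Instead it first reduces to $F=K_{s,t}^*$, meaning $K_{s,t}$ with an edge added inside the part of size $s$ (legitimate because any $3$-chromatic $F$ with a color-critical edge is a subgraph of some $K_{s,t}^*$, so $\ex(n,H,F)\le \ex(n,H,K_{s,t}^*)$), and then inducts on $s$, the base case $s=2$ being exactly the $B_t$-hypothesis since $K_{2,t}^*=B_t$. In the inductive step the dichotomy is: either $G$ contains no $K_{s-1,qt}^*$, in which case the induction hypothesis on $s$ finishes immediately (no progressive induction needed in this branch); or $G$ contains a copy $K$ of $K_{s-1,qt}^*$, and then $K_{s,t}^*$-freeness forces every vertex outside $K$ to send at most $s+t-2$ edges to $K$, so some vertex $v$ of $K$ has degree at most $(s+t-2)n/qt$ and hence lies in at most $\varepsilon n^{|V(H)|-1}$ copies of $H$, while every vertex of $G_n$ lies in $\Omega(n^{|V(H)|-1})$ copies. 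Deleting $v$ thus gives a deficit of order $n^{|V(H)|-1}$ --- far more robust than the razor-thin gap $p_v<m_n$ you would need --- and this structural source of a low-participation vertex is precisely what your sketch does not supply.

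There is a second, concrete defect in your setup: you take $\cB=\cA\cap\{\text{complete bipartite graphs}\}$. The hypothesis only controls the value $\ex(n,H,B_t)$; it does not say that complete bipartite graphs are the unique extremal $B_t$-free graphs. If some extremal $F$-free graph $G$ were triangle-free but not complete bipartite, then $G$ would be $B_t$-free, forcing $f(G)=0$, yet $G\notin\cB$; Lemma \ref{progi} would then demand a smaller graph $G'$ with $f(G')>f(G)=0$, i.e.\ a violation at $n'$ of the very statement being proved, so the induction step could not be verified. The paper avoids this by letting $\cB$ be the $K_3$-free members of $\cA$: the conclusion ``every extremal graph is triangle-free'' suffices, because triangle-free graphs are $B_t$-free and the hypothesis then caps their number of copies of $H$ by $\cN(H,G_n)$.
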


\begin{proof}Observe first that it is enough to prove the statement for $F=K_{s,t}^*$, which denotes $K_{s,t}$ with an edge added inside the part of size $s$. We will use induction on $s$. Note that $K_{2,t}^*=B_t$, thus the base case $s=2$ is the assumption in the statement.

Assume now that $s>2$ and we know that the statement holds for $K_{s-1,t'}^*$ for any $t'$. Let us fix an integer $q$ that is large enough (depending on $s$, $t$ and $H$), and let $G$ be a $K_{s,t}^*$-free graph on $n$ vertices, where $n$ is large enough (depending on $s$, $t$, $q$ and $H$). If $G$ does not contain $K_{s-1,qt}^*$, then it contains at most $\cN(H,G_n)$ copies of $H$ by the induction hypothesis and we are done. Let us assume there is a copy of $K$ of $K_{s-1,qt}^*$ in $G$. Observe that every other vertex $u$ is connected to at most $t-1$ of the vertices in the part of size $qt$ of $K$, otherwise $u$ with its $t$ neighbors in that part and the $s-1$ vertices on the other part would form a $K_{s,t}^*$. 

That means that there are at most $(n-s+1-qt)(s-1+t-1)$ edges from the other vertices to $K$. This implies that there is a vertex $v$ in $K$ that has degree at most $(s+t-2)n/qt$ in $G$. Thus, for 
any $\varepsilon>0$, we can choose a $q$ large enough so that
$v$ is in at most $\varepsilon n^{|V(H)|-1}$ copies of $H$.
Then we apply progressive induction. Let $\cA$ denote the family of extremal graphs for $\ex(n,H,K_{s,t}^*)$, i.e. for every $n$, those $n$-vertex graphs which are $K_{s,t}^*$-free, and contain the most copies of $H$ among such graphs on $n$ vertices. Let $\cB$ denote those elements of $\cA$ that are also $K_3$-free and let $f(G):=\cN(H,G)-\cN(H,G_n)$. Let $n'=n-1$ and $G'$ obtained by deleting $v$ from $G$. Let $G''$ be an $F$-free graph on $n-1$ vertices with $\ex(n-1,H,F)$ copies of $H$, thus $G''\in \cB$. Then $f(G)-f(G'')\le f(G)-f(G')\le\cN(H,G_{n-1})-\cN(H,G_n)+ \varepsilon n^{|V(H)|-1}$. 

To apply Lemma \ref{progi} and finish the proof, we need to show that this number is negative, i.e.
every vertex in $G_n$ is in more than $\varepsilon n^{|V(H)|-1}$ copies of $H$ for some $\varepsilon>0$, finishing the proof (observe that we can obtain $G_{n-1}$ from $G_n$ by deleting a vertex). Indeed, every vertex in the same part of $G_n$ is in the same number of copies of $H$. If they are in $o(n^{|V(H)|}-1)$ copies, then there are $o(n^{|V(H)|})<\ex(n,H,T_2(n))$ copies of $H$ in $G_n$, a contradiction to our assumption that $G_n$ is the extremal graph for $\ex(n,H,F)$.
\end{proof}

Now we are ready to prove Theorem \ref{p3cc}, that we restate here for convenience.

\begin{thm*}
If $F$ is a 3-chromatic graph with a color-critical edge, then $P_3$ is $F$-Tur\'an-good.
\end{thm*}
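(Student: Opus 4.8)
The plan is to apply the progressive-induction machinery just developed, specializing $H=P_3$ in Lemma~\ref{ccb2}. Since $\cN(P_3,G)=\sum_{v}\binom{d(v)}{2}$ counts paths by their middle vertex, the target extremal graph should be the balanced complete bipartite graph $T_2(n)=K_{\lfloor n/2\rfloor,\lceil n/2\rceil}$, so I set $a_n=\lfloor n/2\rfloor$ and $G_n=T_2(n)=K_{a_n,n-a_n}$. This sequence clearly satisfies $a_n-a_{n-1}\le 1$ and $a_n<n/2$ (with the convention that the near-balanced graph is the maximizer), so the only hypothesis of Lemma~\ref{ccb2} that needs verification is the base case: that for every fixed $t$ and all large $n$, the extremal $B_t$-free graph for counting $P_3$'s is exactly $T_2(n)$. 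Once that is in hand, Lemma~\ref{ccb2} upgrades the forbidden graph from $B_t=K_{2,t}^*$ to an arbitrary $3$-chromatic $F$ with a color-critical edge, which is precisely the assertion that $P_3$ is $F$-Tur\'an-good.

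So the real content is the base case $\ex(n,P_3,B_t)=\cN(P_3,T_2(n))$ for $n$ large. Here I would argue directly. Let $G$ be a $B_t$-free graph on $n$ vertices maximizing $\cN(P_3,G)$. Since $B_t$ is bipartite, $\ex(n,P_3,B_t)=\Omega(n^3)$ by Proposition~\ref{als2} (or Proposition~\ref{as}), so $G$ has many $P_3$'s. The $B_t$-free condition says no edge lies in $t$ triangles, equivalently every edge is contained in at most $t-1$ triangles. I would first use the removal lemma in the spirit of Corollary~\ref{flenbtwo}: because a near-extremal $G$ cannot contain many triangles (a single edge in $\ge t$ triangles is forbidden, and a careful count shows triangle-rich configurations cost $P_3$'s), I can delete $o(n^2)$ edges to make $G$ triangle-free while losing only $o(n^3)$ copies of $P_3$. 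Then Proposition~\ref{propi} (symmetrization) replaces the triangle-free graph by a complete bipartite graph with at least as many $P_3$'s, and finally among complete bipartite graphs $K_{a,n-a}$ the quantity $a\binom{n-a}{2}+(n-a)\binom{a}{2}$ is maximized (for large $n$) at the balanced split, giving $T_2(n)$.

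The step I expect to be the main obstacle is controlling the triangles precisely enough to justify the removal/symmetrization reduction with the \emph{exact} (not merely asymptotic) extremal graph. The subtlety is that $B_t$-freeness bounds triangles per edge but does not forbid triangles outright, so a priori $G$ could use triangles to gain a few extra $P_3$'s; I must show that any triangle can be locally rerouted into a bipartite configuration without decreasing $\cN(P_3,\cdot)$, and that the maximizer is \emph{uniquely} the balanced bipartite graph once $n$ is large. This is where I would lean on the fact that $\cN(P_3,G)=\sum_v\binom{d(v)}{2}$ is a convex function of the degree sequence: among graphs with a fixed number of edges the count is pushed up by concentrating degrees, while the $B_t$-free constraint and the triangle-deletion argument pin the edge count and global structure down to $T_2(n)$. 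Verifying that this optimization over complete bipartite graphs indeed lands exactly on $T_2(n)$ for large $n$ is a routine convexity calculation, but reconciling it cleanly with the $o(n^2)$ edge deletions—so that the asymptotic reduction sharpens to an exact identity—is the delicate part, and is exactly what the progressive-induction lemma is designed to absorb.
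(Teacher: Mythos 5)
Your opening move --- invoking Lemma \ref{ccb2} with $H=P_3$ and $G_n=T_2(n)$ to reduce the general $3$-chromatic $F$ to the books $B_t$ --- is exactly the paper's first step, and you are also right that the whole content then sits in the base case $\ex(n,P_3,B_t)=\cN(P_3,T_2(n))$. The gap is in how you treat that base case. Your plan (remove $o(n^2)$ edges to destroy all triangles, losing $o(n^3)$ copies of $P_3$, then symmetrize via Proposition \ref{propi} and optimize over $K_{a,n-a}$) can only ever produce $\ex(n,P_3,B_t)\le(1+o(1))\cN(P_3,T_2(n))$: the $o(n^3)$ loss is lower order but nonzero, so a $B_t$-free graph containing a few triangles could a priori still beat $T_2(n)$ by a lower-order amount, which is precisely what an exact theorem must exclude. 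You acknowledge this, but your two proposed rescues do not close it: ``locally rerouting a triangle into a bipartite configuration'' is not an argument supplied by any lemma here, and progressive induction does not ``absorb'' the error automatically --- Lemma \ref{progi} only applies after you exhibit, for every extremal graph $G\notin\cB$, a concrete smaller $G'\in\cA$ with $f(G)<f(G')$, and constructing that step is exactly the work your proposal leaves out. (A secondary slip: $B_t$ is not bipartite --- it contains triangles; the superlinear lower bound comes from $T_2(n)$ being $B_t$-free, or from Proposition \ref{as}, not Proposition \ref{als2}.)

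What the paper actually does for the base case is the following, and none of it appears in your outline. Take $f(G)=\cN(P_3,G)-\ex(n,P_3,K_3)$ on the extremal $B_t$-free graphs; if such a $G$ contains a triangle $uvw$, pass to an extremal graph $G'$ on $n'=n-3$ vertices and show that the number of copies of $P_3$ meeting $\{u,v,w\}$ is strictly smaller than $\ex(n,P_3,K_3)-\ex(n-3,P_3,K_3)\ge 3\left(\binom{\lfloor n/2\rfloor}{2}+\lfloor(n-2)^2/4\rfloor-\lceil n/2\rceil\right)$. This comparison needs two quantitative inputs: (i) $B_t$-freeness forces $d(u)+d(v)+d(w)\le n+3t-3$, since only a bounded set of vertices can be adjacent to two of $u,v,w$; and (ii) every degree in an extremal graph is at most $cn$ with $c=0.51$ --- and this is where Corollary \ref{flenbtwo} is really used: not to delete all triangles globally, but only to kill the high-degree case, a regime where the $(1+o(1))$ asymptotic error is harmless because $\cN(P_3,K_{cn,(1-c)n})$ falls short of $\cN(P_3,T_2(n))$ by a constant factor. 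Given (i) and (ii), convexity bounds the dominant term $\binom{d(u)-2}{2}+\binom{d(v)-2}{2}+\binom{d(w)-2}{2}$ by roughly $2\binom{cn}{2}$, which loses to $3\binom{\lfloor n/2\rfloor}{2}$ since $2c^2<3/4$, while the copies of $P_3$ centered outside the triangle contribute at most $2|E(G_1)|\le 2\lfloor(n-3)^2/4\rfloor$, dominated by the $3\lfloor(n-2)^2/4\rfloor$ term. Then Lemma \ref{progi} concludes that for large $n$ every extremal graph is triangle-free, and $\ex(n,P_3,K_3)=\cN(P_3,T_2(n))$ finishes the argument. So your skeleton (reduction via Lemma \ref{ccb2}, then books) matches the paper, but the decisive device --- the triangle-deletion step inside the progressive induction, with the degree bounds that make the counting close --- is missing, and without it your argument stalls at an asymptotic statement.
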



\begin{proof}
By Lemma \ref{ccb2}, it is enough to prove the statement for $F=B_t$.
 Let $G$ be a $B_t$-free graph on $n$ vertices. First we show that the degrees in $G$ cannot be much larger than $n/2$. Let $c=0.51$ and assume there is a vertex with degree at least $cn$. Observe that every neighbor of $v$ is connected to at most $t-1$ neighbors of $v$. Let $G_0$ be the graph we obtain by deleting all the edges between neighbors of $v$. Then $G_0$ has an independent set of size $cn$. We can apply Corollary \ref{flenbtwo} to show that $G_0$ has at most $(1+o(1))\cN(P_3,K_{cn,(1-c)n})$ copies of $P_3$ (here we also use the fact that making the complete bipartite graph more unbalanced would decrease the number of copies of $P_3$, which follows from a simple calculation). Observe that $G$ has at most $\cN(P_3,G)+O(n^2)$ copies of $P_3$, as the deleted edges all are in $O(n)$ copies of $P_3$. Therefore, $\cN(P_3,G)\le (1+o(1))\cN(P_3,K_{cn,(1-c)n})<\cN(T_2(n))$.

Assume now that $G$ contains a triangle with vertices $u$, $v$ and $w$. Observe that at most $t-2$ other vertices are connected to both $u$ and $v$, and similarly to both $u$ and $w$ or to both $v$ and $w$. Therefore, we have $d(u)+d(v)+d(w)\le n+3t-3$. Let $U$ be the set of the at most $3t-3$ vertices connected to more than one of $u$, $v$ and $w$ (thus $u,v,w\in U$).

 Let $G_1$ be the graph we obtain by deleting $u,v,w$. Let us examine the copies of $P_3$ in $G$. The number of copies containing none of $u,v,w$ is at most $\ex(n-3,P_3,B_t)$. There are 3 copies of $P_3$ inside the triangle.

The other copies of $P_3$ have vertices in both $G_1$ and in the triangle. The number of those copies having their center in 
$V(G_1)\setminus U$ is at most twice the number of edges in $G_1$, as their endpoint has at most one neighbor among $u,v,w$. The number of copies having their center in $U$ and another vertex in the triangle is at most three times the number of edges incident to $U$, thus at most $(9t-9)n$. Finally, the number of copies having their center in the triangle and the other vertices in $G_1$ is $\binom{d(u)-2}{2}+\binom{d(v)-2}{2}+\binom{d(w)-2}{2}$.

Now we will use progressive induction. $\cA$ contains the extremal graphs for $\ex(n,P_3,B_t)$, i.e. for every $n$ the $B_t$-free graphs on $n$ vertices with the most number of copies of $P_3$. $\cB$ consists of those elements of $\cA$ that are $K_3$-free (note that this implies that they are also extremal graphs for $\ex(n,P_3,K_3)$). Let $f(G)=\cN(P_3,G)-\ex(n,P_3,K_3)$. As $\cN(P_3,G)=\ex(n,P_3,B_t)$, we have that $f(G)$ is a non-negative integer, and obviously $f(G)=0$ if $G\in \cB$.

Let $n'=n-3$ and $G'$ be a $B_t$-free graph on $n-3$ vertices with $\ex(n,P_3,B_2)\ge\cN(P_3,G_1)$ copies of $P_3$. Then $f(G)-f(G')$ is at most the number of copies of $P_3$ containing $u$, $v$ or $w$, plus $\ex(n-3,P_3,K_3)-\ex(n,P_3,K_3)$. By the above, the number of copies of $P_3$ containing $u$, $v$ or $w$ is at most 
\begin{equation}\label{eq1}
3+2|E(G')|+(9t-9)n+\binom{d(u)-2}{2}+\binom{d(v)-2}{2}+\binom{d(w)-2}{2}.\end{equation} 

On the other hand, \begin{equation}\label{eq2}
\ex(n,P_3,K_3)-\ex(n-3,P_3,K_3)\ge 3\left(\binom{\lfloor n/2\rfloor}{2}+\lfloor (n-2)^2/4\rfloor-\lceil n/2\rceil\right).\end{equation} 

Indeed, in the Tur\'an graph that is extremal for $\ex(n,P_3,K_3)$, every vertex is in at least $\binom{\lfloor n/2\rfloor}{2}+\lfloor (n-2)^2/4\rfloor$ copies of $P_3$ and for three vertices, we count at most $3\lceil n/2\rceil$ copies of $P_3$ twice. We need to show that (\ref{eq1}) is smaller than (\ref{eq2}). Observe that by Theorem \ref{sim} we have $|E(G')|\le \lfloor (n-3)^2/4\rfloor$, as $G'$ is $B_t$-free and $n$ is large enough. We will show that $\binom{d(u)-2}{2}+\binom{d(v)-2}{2}+\binom{d(w)-2}{2}<3\binom{\lfloor n/2\rfloor}{2}-3\lceil n/2\rceil-3-(9t-9)n$. Recall that each degree is at most $cn$, and $d(u)+d(v)+d(w)\le n+3t-3$. Thus $\binom{d(u)-2}{2}+\binom{d(v)-2}{2}+\binom{d(w)-2}{2}$ is maximized when the three degrees are distributed as unbalanced as possible, implying this sum is at most $2\binom{cn}{2}$, which is smaller than $<3\binom{\lfloor n/2\rfloor}{2}-3\lceil n/2\rceil-3-(9t-9)n$ if $n$ is large enough. This completes the proof.
\end{proof}





It is likely that the above proof can be slightly modified to show $\ex(n,H,B_t)=\ex(n,H,K_3)$ for many other bipartite graphs $H$ in place of $P_3$. I believe it should hold for every complete bipartite graph $H=K_{a,b}$. However, in this case $\ex(n,H,K_3)=\cN(H,K_{m,n-m})$, where $n$ and $m$ might be far apart. When one counts the copies of $K_{a,b}$ having a vertex in the triangle $uvw$, one needs to count the copies of $K_{a-1,b}$ in $G_1$. But, if we use the bound $\cN(K_{a-1,b},G_1)\le \ex(n-3,K_{a-1,b},B_t)$, as in the above proof, we need to deal with the problem, that the $B_t$-free graph with the most number of copies of $H$ might be a complete bipartite graph where the ratio of the parts is far from $m/(n-m)$. This makes the calculations much more complicated. Here we do not attempt to prove a general statement, but we need to deal with $\ex(n,S_4,B_2)$. The following result, combined with Corollary \ref{induc} gives an exact result.

\begin{proposition}\label{s4b2}
If $F$ is 3-chromatic with a color-critical edge, then $\ex(n,S_4,F)=\ex(n,S_4,K_3)$.
\end{proposition}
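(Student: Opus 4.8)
The plan is to follow the template of the proof of Theorem~\ref{p3cc}, replacing the role of $P_3$ by $S_4$ throughout, and to rely on Lemma~\ref{ccb2} to reduce to the book graph $F=B_t$. The key structural feature that makes $S_4$ as tractable as $P_3$ is that $\ex(n,S_4,K_3)$ is attained on a \emph{complete bipartite} graph (Corollary~\ref{induc}), namely $K_{k,n-k}$ or $K_{k+1,n-k-1}$ with $k=\lfloor n/2-\sqrt{(3n-4)/2}\rfloor$. Thus the conjectured extremal family $\cB_0$ consists of complete bipartite graphs $G_n=K_{a_n,n-a_n}$ with $a_n$ growing like $n/2-\Theta(\sqrt n)$, and in particular $a_n-a_{n-1}\le 1$ holds, so the hypotheses of Lemma~\ref{ccb2} are met once we establish the base case $F=B_t$. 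Hence it suffices to prove $\ex(n,S_4,B_t)=\ex(n,S_4,K_3)$ for $n$ large.

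First I would bound the degrees. Fix $c=0.51$ and suppose some vertex $v$ has degree at least $cn$; since $G$ is $B_t$-free, each neighbor of $v$ is joined to at most $t-1$ other neighbors of $v$, so deleting all edges inside $N(v)$ produces a graph $G_0$ with an independent set of size at least $cn$. Applying Corollary~\ref{flenbtwo} (with $H=S_{4}$, which is the complete bipartite graph $K_{1,3}$) gives $\cN(S_4,G_0)\le(1+o(1))\cN(S_4,K_{cn,(1-c)n})$, and since deleting the at most $O(n)$ intra-neighborhood edges destroys only $O(n\cdot n)=O(n^2)=o(n^{4})$ copies of $S_4$, the same asymptotic bound holds for $G$. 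A short calculation shows $\cN(S_4,K_{cn,(1-c)n})<\ex(n,S_4,K_3)$, because the optimal split for $S_4$-counting is the unbalanced $K_{a_n,n-a_n}$ rather than $K_{0.51n,0.49n}$. So from now on every degree is below $cn$.

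Next I would run the progressive induction of Lemma~\ref{progi}, exactly as in Theorem~\ref{p3cc}. Take $\cA$ to be the extremal $B_t$-free graphs for $\ex(\cdot,S_4,B_t)$, let $\cB$ be those members of $\cA$ that are $K_3$-free, and set $f(G)=\cN(S_4,G)-\ex(n,S_4,K_3)$. Assuming $G\notin\cB$, pick a triangle $uvw$; as in the earlier proof, $B_t$-freeness forces $d(u)+d(v)+d(w)\le n+3t-3$. Deleting $u,v,w$ yields $G'$ on $n-3$ vertices, and I would bound the number of copies of $S_4$ meeting $\{u,v,w\}$ by splitting into cases according to where the center of the star lies: centers in $V(G')$ outside the small set $U$ of vertices adjacent to more than one of $u,v,w$ contribute at most a term controlled by the degrees in $G'$; centers in $U$ contribute an $O(n)$ term; and stars centered at $u$, $v$, or $w$ contribute $\binom{d(u)-1}{3}+\binom{d(v)-1}{3}+\binom{d(w)-1}{3}$. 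The goal is to show this total is strictly less than $\ex(n,S_4,K_3)-\ex(n-3,S_4,K_3)$, i.e.\ less than the number of stars through three vertices of the extremal complete bipartite graph.

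The main obstacle, exactly as the author flags in the paragraph preceding the statement, is the degree-sum optimization: unlike $P_3$, where the comparison term $\binom{d-2}{2}$ is quadratic, here the stars through $u,v,w$ contribute the \emph{cubic} quantity $\sum\binom{d(\cdot)-1}{3}$, which is maximized when the degree budget $n+3t-3$ is distributed as unbalancedly as possible subject to the cap $d(\cdot)\le cn$, giving roughly $2\binom{cn}{3}$. I would then have to verify that $2\binom{cn}{3}$ (plus the lower-order center-outside and center-in-$U$ terms) is beaten by the corresponding cubic count in $K_{a_n,n-a_n}$ with the specific $a_n\approx n/2-\sqrt{3n/2}$. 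Because the extremal split is $n/2-\Theta(\sqrt n)$ rather than exactly balanced, and because $c=0.51$ is only slightly above $1/2$, this inequality is genuinely tight in the leading constant and must be checked carefully; this is where the argument is harder than for $P_3$, and it is the step I expect to occupy the bulk of the work. Once this strict inequality is in hand, Lemma~\ref{progi} yields that all large extremal graphs lie in $\cB$, which gives $\ex(n,S_4,B_t)=\ex(n,S_4,K_3)$ and, via Lemma~\ref{ccb2}, the full statement.
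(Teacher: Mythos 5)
Your proposal follows the paper's proof route step for step: reduction to $F=B_t$ via Lemma~\ref{ccb2} (with Proposition~\ref{gpl2.5} and Corollary~\ref{induc} supplying the complete bipartite extremal family), the degree cap $d(v)\le cn$ with $c=0.51$ via Corollary~\ref{flenbtwo}, and then progressive induction (Lemma~\ref{progi}) after deleting a triangle $uvw$, comparing the stars lost against $\ex(n,S_4,K_3)-\ex(n-3,S_4,K_3)$. The plan is sound and is essentially the paper's own proof.

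One item of your bookkeeping needs correction, precisely at the step you single out as the bulk of the work. The ``center-outside'' contribution is not lower-order: stars with center $x\in V(G')\setminus U$ and one leaf in $\{u,v,w\}$ number at most $\sum_x\binom{d_{G'}(x)}{2}=\cN(P_3,G')\le\ex(n-3,P_3,B_t)=n^3/8+o(n^3)$ (this is exactly where Theorem~\ref{p3cc} gets reused), and $n^3/8$ is the \emph{largest} term on the left-hand side, not a negligible one; $2\binom{cn}{3}\approx 0.044n^3$ is smaller. With it included, the inequality to check reads $n^3/8+2\binom{cn}{3}+O(n^2)<\ex(n,S_4,K_3)-\ex(n-3,S_4,K_3)$, and the right-hand side is about $3\bigl(\binom{n/2}{3}+n^3/16\bigr)+o(n^3)\approx n^3/4$, because each deleted vertex of the near-balanced extremal bipartite graph is the center of roughly $\binom{n/2}{3}$ stars and a leaf of roughly $n^3/16$ stars. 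So the comparison is about $0.17n^3$ versus $0.25n^3$ --- comfortable rather than tight; it would go through for any $c<0.72$. Relatedly, your worry that the $n/2-\Theta(\sqrt{n})$ imbalance of the extremal split makes the leading constant delicate is unfounded: that imbalance shifts the counts only at order $n^{5/2}$, so it is invisible at the cubic scale (it matters for the exactness bookkeeping fed into Lemma~\ref{ccb2}, not for this inequality). Finally, two harmless slips: centers in $U$ contribute $O(n^2)$ (not $O(n)$) stars, and deleting the $O(n)$ edges inside $N(v)$ destroys $O(n^3)$ (not $O(n^2)$) copies of $S_4$, since each edge lies in $O(n^2)$ stars; both are still $o(n^4)$, which is all that step requires.
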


We only give a sketch, and point out the differences to the proof of Theorem \ref{p3cc}.

\begin{proof}
First observe that it is enough to deal with the case $F=B_t$. Indeed, if $\ex(n,S_4,B_t)=\ex(n,S_4,K_3)$, then there is a complete bipartite extremal graph by Proposition \ref{gpl2.5}, and then Lemma \ref{ccb2} finishes the proof.

By Corollary \ref{induc}, the complete bipartite graph with $\ex(n,S_4,K_3)$ copies of $S_4$ has two parts of size $(\frac{1}{2}+o(1))n$. Therefore, as in the proof of Theorem \ref{p3cc}, we can obtain that every degree is at most $cn$, for $c=0.51$. Again, we pick a triangle with vertices $u,v,w$ and obtain $G_1$ by deleting them. There is a set $U$ of at most $3t-3$ vertices connected to more than one of $u$, $v$ and $w$. The number of copies of $S_4$ is at most $\ex(n-3,S_4,B_t)$ in $G_1$ and at most $\cN(P_3,G_1)+\binom{d(u)-2}{3}+\binom{d(v)-2}{3}+\binom{d(w)-2}{3}+O(n^2)$ additionally, where the $O(n^2)$ term contains those copies that have at least two vertices in $U\cup\{u,v,w\}$. Observe that $\cN(P_3,G_1)\le \ex(n-3,P_3,B_t)=n^3/8+o(n^3)$ and $\binom{d(u)-2}{3}+\binom{d(v)-2}{3}+\binom{d(w)-2}{3}$ is again maximized if they are as unbalanced as possible, thus is at most $2\binom{cn}{3}$. 

We have $\ex(n,S_4,K_3)=\cN(S_4,K_{k,n-k})$ for some $k$ by Corollary \ref{induc} and $\ex(n-3,S_4,K_3)=\cN(S_4,K_{\ell,n-3-\ell})$. It is easy to see that $\ell$ is either $k-1$ or $k-2$, thus
there are three vertices $x,y,z$ of $K_{k,n-k}$ such that deleting them we obtain $K_{\ell,n-3-\ell}$. Hence $\ex(n,S_4,K_3)-\ex(n-3,S_4,K_3)$ is the number of copies of $S_4$ containing $x$, $y$ or $z$. For each of them, there are $3\binom{n/2}{3}+o(n^3)$ copies of $S_4$ where it is the center, and $n^3/16+o(n^3)$ where it is a leaf. There are $o(n^3)$ copies of $S_4$ that are counted multiple times, thus we have $\ex(n,S_4,K_3)-\ex(n-3,S_4,K_3)\ge 3\binom{n/2}{3}+n^3/16+o(n^3)$.
We use progressive induction as in the proof of Theorem \ref{p3cc}. It is again obvious that $f(G)<f(G')$, which finishes the proof.
\end{proof}

\section{Concluding remarks}

$\bullet$ We have studied generalized Tur\'an problems for graphs having at most four vertices. In two cases, we were unable to determine even the order of magnitude of $\ex(n,H,F)$. However, in those cases it would be a major breakthrough in Combinatorics to find the order of magnitude, due to the connection to the Ruzsa-Szemer\'edi theorem.

In some other cases, we could obtain the asymptotics, but not an exact result. One of them is the ordinary Tur\'an problem for $C_4$, which has received a considerable attention, and the exact value of $\ex(n,C_4)$ has been found for infinitely many $n$, as we have mentioned. In case we forbid $C_4$ and count other graphs, we have obtained some exact results, where the friendship graph was the extremal one. This is not the case when counting $K_3$, $M_2$ or $P_4$. Still, one could hope that there is another $C_4$-free graph that has few edges (thus is not considered when dealing with ordinary Tur\'an problems), but many copies of one of the above mentioned graphs. We show that this is not the case.

We claim that if  $G$ is $C_4$-free, then $\cN(P_4,G)\le n|E(G)|/2$ and $\cN(K_3,G)\le |E(G)|/3$. Indeed, let us choose an edge $uv$ and a vertex $w$. There is at most one common neighbor of $u$ and $w$ and another one of $v$ and $w$, and we count every copy of $P_4$ twice this way. Similarly, for an edge $uv$, $u$ and $v$ have at most one common neighbor.
Proposition \ref{coun} shows that $\cN(M_2,G)\le |E(G)|^2$. On the other hand, we have shown $\ex(n,P_4,C_4)=(1+o(1))n\ex(n,C_4)/2$, $\ex(n,K_3,C_4)=(1+o(1))\ex(n,C_4)/3$ and $\ex(n,M_2,C_4)=(1+o(1))\ex(n,C_4)^2$.
Thus in all these cases, for the extremal graph $G$ we have $|E(G)|=(1+o(1))\ex(n,C_4)$. It means determining $\ex(n,K_3,C_4)$, $\ex(n,P_4,C_4)$ or $\ex(n,M_2,C_4)$ exactly is likely as hard as determining $\ex(n,C_4)$. It is possible that one can obtain exact results for infinitely many $n$, using the same ideas as in the ordinary Tur\'an case.

$\bullet$ In each other case we have determined $\ex(n,H,F)$ for $n$ large enough. We did not deal with the case $n$ is small, but probably it is not very hard. Another way to extend these results is to determine all the extremal graphs.

$\bullet$ Another possible direction of future research is to consider graphs on at most five vertices. There are 22 graphs without isolated vertices on five vertices, thus the $10\times 10$ table would be replaced by a $32\times 32$ table, with more than 10 times more entries. Also, all the graphs studied in this paper but $T_1$ belong to at least one well-studied class of graphs, with several results concerning them. There are more exceptions in case of graphs on five vertices, and presumably there are less known results concerning those graphs. 

$\bullet$ It is worth checking what graphs were extremal (or close to extremal) for a given forbidden graph, as they might be also extremal in case we count other graphs. For $K_2$, $P_3$ and $M_2$ there are not many graphs avoiding them. For $K_3$, the extremal graph was always a complete bipartite graph, and it was balanced with one exception. For $S_4$, the extremal graph was sometimes an arbitrary 2-regular graph, but in case of $K_3$ and $C_4$, the extremal graph consisted of vertex-disjoint copies of those graphs (thus it had 2-regular components and potentially some isolated vertices). For $P_4$, the extremal graph was either $S_n$ or $D(3,n)$. For $C_4$, the lower bound was given by either the well-known construction for the ordinary Tur\'an problem concerning $C_4$, or the friendship graph $F(n)$ (in case of counting $S_4$, the lower bound was given by the star $S_n$, which is a subgraph of $F(n)$ and has the same number of copies of $S_4$). In case of $T_1$, the extremal graph was either $D(3,n)$ or a complete bipartite graph, which was balanced with one exception. In case of $B_2$, the lower bound was given by either the construction of Ruzsa and Szemer\'edi, where every edge is in exactly one triangle, or by a complete bipartite graph, which was again balanced with one exception. For $K_4$, in each case the extremal graph was the Tur\'an graph $T_3(n)$.


\begin{thebibliography}{99}

		
		\bibitem{ALS2016} N. Alon, C. Shikhelman. Many $T$ copies in $H$-free graphs. \textit{Journal of Combinatorial Theory, Series B}, \textbf{121}, 146--172, 2016.
		
		        \bibitem{BGY2008} B. Bollob\'as, E. Gy\H ori. Pentagons vs. triangles. \textit{Discrete Mathematics}, \textbf{308}(19), 4332--4336, 2008.
		        
		        \bibitem{brosid} J. I. Brown, A. Sidorenko. The inducibility of complete bipartite graphs. \textit{Journal of Graph Theory}, \textbf{18}(6), 629--645, 1994.
		        
		        \bibitem{cc} D. Chakraborti, D.Q. Chen. Exact results on generalized Erd\H os-Gallai problems. arXiv:2006.04681, 2020.

		        
		    \bibitem{cvk}  S. Cambie, R. de Verclos, R. Kang. Regular Tur\'an numbers and some Gan–Loh–Sudakov-type problems. arXiv:1911.08452, 2019.

\bibitem{chase} Z. Chase. A Proof of the Gan-Loh-Sudakov Conjecture. arXiv:1911.08452, 2019.

		        
		\bibitem{erga}        P. Erd\H os, T. Gallai. On maximal paths and circuits of graphs. \textit{Acta Mathematica Academiae Scientiarum Hungaricae}, \textbf{10}, 337--356, 1959.

\bibitem{valenc} P. Erd\H os, M. Simonovits. On a valence problem in extremal graph theory. \textit{Discrete Mathematics}, \textbf{5}, 323--334, 1973.


\bibitem{fasc} R. J. Faudree, R. H. Schelp. Path Ramsey numbers in multicolorings.
\textit{Journal of Combinatorial Theory, Series B}, \textbf{19}(2), 150--160, 1975.

\bibitem{fure} Z. F\"uredi. On the number of edges of quadrilateral-free graphs. \textit{Journal of Combinatorial
Theory, Series B} \textbf{68}, 1--6, 1996.

\bibitem{fur} Z. F\"uredi. New asymptotics for bipartite Tur\'an numbers. \textit{Journal of Combinatorial Theory,
Series A}, \textbf{75}(1), 141--144, 1996.

\bibitem{gls} W. Gan, P. Loh, B. Sudakov. Maximizing the number of independent sets of a fixed size.
\textit{Combinatorics, Probability and Computing}, \textbf{24}, 521--527, 2015.
		
		\bibitem{GGMV2017+} D. Gerbner, E. Gy\H{o}ri, A. Methuku, M. Vizer. Generalized Tur\'an numbers for even cycles. \textit{Journal of Combinatorial Theory, Series B}, \textbf{145}, 169--213, 2020.
		
\bibitem{ggmv}		D. Gerbner, E. Gy\H{o}ri, A. Methuku, M. Vizer. Induced generalized Tur\'an numbers. \textit{manuscript}
		

        \bibitem{GMV2017} D. Gerbner, A. Methuku, M. Vizer. Generalized Tur\'an problems for disjoint copies of graphs.  \textit{Discrete Mathematics}, \textbf{342}(11), 3130--3141 2019.
        
        \bibitem{GP2017} D. Gerbner, C. Palmer. Counting copies of a fixed subgraph of $F$-free graphs. \textit{European Journal of Mathematics}, \textbf{82}, 103001, 2019.
        
        \bibitem{gp2} D. Gerbner, C. Palmer. Some exact results for generalized Tur\'an problems. arXiv:2006.03756, 2020.
        
                \bibitem{gs2017} L. Gishboliner, A. Shapira.  A Generalized Tur\'an Problem and its Applications. \textit{Proceedings of STOC 2018 
Theory Fest: 50th Annual ACM Symposium on the Theory of Computing
June 25-29, 2018 in Los Angeles, CA,} 760--772, 2018.
		
        \bibitem{G2012} A. Grzesik. On the maximum number of five-cycles in a triangle-free graph. \textit{Journal of Combinatorial Theory, Series B}, \textbf{102}(5), 1061--1066, 2012.
        
        
\bibitem{gypl} E. Gy\H ori, J. Pach, M. Simonovits. On the maximal number of certain subgraphs
in $K_r$-free graphs, \textit{Graphs and Combinatorics}, \textbf{7}(1), 31--37, 1991.

\bibitem{gstz} E. Gy\H ori, N. Salia, C. Tompkins, O. Zamora. The maximum number of $P_l$ copies in $P_k$-free
graphs. \textit{Acta Mathematica Universitatis Comenianae}, \textbf{88}3, 773--778, 2019.
        
                \bibitem{HHKNR2013} H. Hatami, J. Hladk\'y, D. Kr\' al, S. Norine, A. Razborov. On the number of pentagons in triangle-free graphs. \textit{Journal of Combinatorial Theory, Series A}, \textbf{120}(3), 722--732, 2013.
                

\bibitem{mq} Jie Ma, Yu Qiu, Some sharp results on the generalized Tur\'an numbers. \textit{European Journal of Combinatorics}, \textbf{84}, 103026, 2018.

\bibitem{rsz} I. Z. Ruzsa, E. Szemer\'edi. Triple systems with no six points carrying three triangles, \textit{Combinatorics (Keszthely, 1976), Coll. Math. Soc. J. Bolyai} \textbf{18}, Volume II, 939--945, 1976.

\bibitem{sim} M. Simonovits. A method for solving extremal problems in graph theory, stability
problems. \textit{Theory of Graphs, Proc. Colloq., Tihany, 1966, Academic Press, New
York}, 279--319, 1968.

\bibitem{T1941} P. Tur\'an. On an extremal problem in graph theory (in Hungarian). \textit{Matematikai \'es Fizikai Lapok}, \textbf{48}, 436--452, 1941.

\bibitem{wang} J. Wang. The maximum number of cliques in graphs without large matchings. arXiv:1812.01832, 2018.

\bibitem{zykov} A. A. Zykov. On some properties of linear complexes. \textit{Matematicheskii sbornik},
\textbf{66}(2), 163--188, 1949.

\end{thebibliography}
\end{document}